\renewcommand{\newframedtheorem}[1]{%
\theoremprework{\framed\vspace{-1.5ex}}%
\theorempostwork{\vspace{-2ex}\endframed}%
\newtheorem@i{#1}%
}
\newtheorem{remark}[theorem]{Remark}
\newtheoremstyle{proof}%
{\item[\theorem@headerfont\hskip\labelsep ##1\theorem@separator]}%
{\item[\ifx\@empty##3\else\theorem@headerfont\hskip \labelsep ##1\ of\ ##3\theorem@separator\fi]}
\theoremstyle{proof}
\newtheorem{proof}{Proof}
\theoremstyle{plain}
\def\bfseries{\fontseries \bfdefault \selectfont \boldmath}
\def\@fnsymbol#1{\ensuremath{\ifcase#1\or *\or **\or {**}* \or {**}{**}\else\@ctrerr\fi}}
\newcommand{\abs}[1]{\ensuremath{\left|#1\right|}}
\newcommand{\br}[1]{\ensuremath{\left(#1\right)}}
\newcommand{\bv}[1]{\ensuremath{\left[#1\right]}}
\renewcommand{\d}{\ensuremath{\,\mathrm{d}}}
\newcommand{\lnorm}[2][2]{\ensuremath{\left\|#2\right\|_{L^{{\scriptsize#1}}}}}
\newcommand{\norm}[1]{\ensuremath{\left\|#1\right\|}}
\newcommand{\seminorm}[1]{\ensuremath{\left[#1\right]}}
\newcommand{\seqn}[2][k]{\ensuremath{\br{{#2}_{#1}}_{#1\in\N}}}
\newcommand{\set}[1]{\ensuremath{\left\{#1\right\}}}
\newcommand{\sets}[2]{\ensuremath{\left\{\left.#1\,\right|#2\right\}}}
\newcommand{\sett}[2]{\ensuremath{\left\{#1\left|\,#2\right.\right\}}}
\renewcommand{\sp}[1]{\ensuremath{\left\langle#1\right\rangle}}
\numberwithin{equation}{section}
\DeclareFontFamily{U}{matha}{\hyphenchar\font45}
\DeclareFontShape{U}{matha}{m}{n}{
      <5> <6> <7> <8> <9> <10> gen * matha
      <10.95> matha10 <12> <14.4> <17.28> <20.74> <24.88> matha12
      }{}
\DeclareSymbolFont{matha}{U}{matha}{m}{n}
\DeclareFontFamily{U}{mathx}{\hyphenchar\font45}
\DeclareFontShape{U}{mathx}{m}{n}{
      <5> <6> <7> <8> <9> <10>
      <10.95> <12> <14.4> <17.28> <20.74> <24.88>
      mathx10
      }{}
\DeclareSymbolFont{mathx}{U}{mathx}{m}{n}
\DeclareMathSymbol{\oast}{2}{matha}{"66}
\DeclareMathSymbol{\bigoast}{1}{mathx}{"C6}
\renewcommand{\a}{\ensuremath{\alpha}}
\renewcommand{\C}{\ensuremath{\mathbb{C}}}
\newcommand{\Cia}[1][1]{\ensuremath{C_{\mathrm{ia}}^{#1}}}
\newcommand{\D}[1]{{#1}(u+w)-{#1}(u)}
\newcommand{\dd}[1][\tau]{{\textstyle\frac{\d}{\d#1}}}
\newcommand{\dg}{{\gamma}'}
\newcommand{\dgt}{\gt'}
\renewcommand{\dh}{{h}'}
\DeclareMathOperator{\diam}{diam}
\DeclareMathOperator{\dist}{dist}
\newcommand{\E}[1][p,q]{\ensuremath{\mathrm{TP}^{(#1)}}}
\newcommand{\eps}{\ensuremath{\varepsilon}}
\newcommand{\Fe}[1][]{\ensuremath{F_{#1}}}
\newcommand{\fracabs}[1]{\frac{#1}{\abs{#1}}}
\newcommand{\g}{\gamma}
\newcommand{\gp}{g^{(p)}}
\newcommand{\gt}[1][\tau]{\gamma_{#1}}
\renewcommand{\G}[1][p]{\ensuremath{\mathcal G^{(#1)}}}
\DeclareMathOperator{\image}{image}
\newcommand{\N}{\ensuremath{\mathbb{N}}}
\renewcommand{\P}[1][\dg(u)]{P_{#1}}
\newcommand{\PP}[1][\dg(u)]{P_{#1}^\perp}
\newcommand{\PPT}[1][\dg_\tau(u)]{P_{#1}^\perp}
\newcommand{\Q}[1][p]{Q^{(#1)}}
\newcommand{\Qe}[1][p]{Q^{(#1)}}
\newcommand{\R}{\ensuremath{\mathbb{R}}}
\renewcommand{\Re}[1][]{R_{#1}^{(p)}}
\renewcommand{\rho}{\ensuremath{\varrho}}
\newcommand{\rzd}{\ensuremath{(\R/\Z,\R^n)}}
\newcommand{\s}{\ensuremath{\sigma}}
\newcommand{\T}[1]{\triangle{#1}}
\renewcommand{\th}{\ensuremath{\vartheta}}
\newcommand{\W}[1][(p-1)/q-1,q]{\ensuremath{W^{\scriptstyle #1}}}
\newcommand{\Wia}[1][(p-1)/q,q]{\ensuremath{W_{\mathrm{ia}}^{#1}}}
\newcommand{\Wint}{\int_{\R/\Z}\int_{-1/2}^{1/2}}
\newcommand{\Wir}[1][(p-1)/q,q]{\ensuremath{W_{\mathrm{ir}}^{#1}}}
\newcommand{\Z}{\ensuremath{\mathbb{Z}}}
\title{Regularity theory for tangent-point energies: \\ The non-degenerate sub-critical case}
\author{%
 Simon Blatt\thanks{Mathematics Institute, Zeeman Building, University of Warwick, Coventry CV4 7AL, United Kingdom, \url{S.Blatt@warwick.ac.uk}}
 \and
 Philipp Reiter\thanks{Fakult\"at f\"ur Mathematik der Universität Duisburg-Essen, Forsthausweg 2, 47057 Duisburg, Germany, \url{philipp.reiter@uni-due.de}}}
\begin{document}
\maketitle
\begin{abstract}
In this article we introduce and investigate a new two-parameter family
of knot energies $\E$ that contains the tangent-point energies. These
energies are obtained by decoupling the exponents in the 
numerator and denominator of the integrand in the original definition of
the tangent-point energies.

We will first characterize the curves of finite
energy $\E$ in the sub-critical range $p\in(q+2,2q+1)$ and see that those
are all injective and regular curves in the Sobolev-Slobodecki{\u\i} space
$\W[(p-1)/q,q]\rzd$. We derive a formula for the first variation
that turns out to be a non-degenerate elliptic operator for the special 
case $q=2$ --- a fact that seems not to be the case for the original 
tangent-point energies. 
This observation allows us to prove that stationary points
of $\E[p,2]+\lambda\,\text{length}$, $p\in(4,5)$, $\lambda>0$,
are smooth --- so especially all local minimizers are smooth.
\end{abstract}
\tableofcontents

\section{Introduction}

%
% The story with the space ship
%

Strzelecki and von der Mosel~\cite{SM7} introduced us to the crew of a space shuttle
travelling with constant speed through the universe on an unknown closed 
loop~$\Gamma$ of length $L$.
With the aid of their instruments they are able to measure at time~$t$ the
ratio of the squared distances $\abs{\Gamma(s)-\Gamma(t)}^2$ from any previous position $\Gamma(s)$, $s\in[0,t]$,
to the distance of the current tangent line $\ell(t)=\Gamma(t)+\R\Gamma'(t)$ from that previous position $\Gamma(s)$, i.~e.\@
\begin{equation*}
 2r_\Gamma(t,s) := \frac{\abs{\Gamma(s)-\Gamma(t)}^2}{\dist\br{\ell(t),\Gamma(s)}}.
\end{equation*}
Interestingly, the astronauts can  gain essential topological information 
and regularity properties from the integral mean of a suitable inverse power 
of all these data, more precisely from
\begin{equation}\label{eq:tp-StrzvdM}
 \mathscr E_q(\Gamma) := \iint_{[0,L]^2} \frac{\d s\d t}{r_\Gamma(t,s)^q}, \qquad q\ge2.
\end{equation}

During a hazardous maneuver
in the southern Andromeda Galaxy
the space craft unfortunately crashed, so the astronauts have to purchase a new one.
The manufacturer meanwhile changed the model
which now measures the ratio
\begin{equation}\label{eq:ratio}
 \tilde r_\Gamma^{(p,q)}(t,s) := \frac{\abs{\Gamma(s)-\Gamma(t)}^p}{\dist\br{\ell(t),\Gamma(s)}^q}
  \qquad\text{for predefinable variables } p,q\ge1
\end{equation}
and praises his innovation for giving more flexibility by choosing the ``power parameters'' $p$ and $q$.
He promises that the integral
\begin{equation}\label{eq:tp}
 \E(\Gamma) := \iint_{[0,L]^2} \frac{\d s\d t}{\tilde r_\Gamma^{(p,q)}(t,s)}
\end{equation}
yields far more information on the topology and regularity of the loop~$\Gamma$
and claims to have obtained particularly good results for $q=2$ and $p$ somewhere between~$4$ and~$5$.
Is he right?

%
% What are knot energies?
%

We will see that for certain parameters the energy $\E$
is a knot energy. 
The notion of \emph{knot energies} goes back to Fukuhara~\cite{fete} and O'Hara~\cite{oha:en}.
The general idea is to search for a ``nicely shaped'' representative in a given knot class
having strands being widely apart and being preferably smooth. More precisely, a knot energy is a functional
that is (i)~bounded below and (ii)~\emph{self-repulsive} (or, synonymously, \emph{self-avoiding}),
i.~e.\@ it blows up on embedded curves converging to a curve with a self-intersection
(with respect to a suitable topology)~\cite[Def.~1.1]{oha:en-kn}.

Knot energies are the central object of the so-called \emph{geometric knot theory} which aims at
investigating geometric properties of a given knotted curve in order to gain information on its knot type.
They also form a subfield of \emph{geometric curvature energies} which include geometric integrals measuring
smoothness and bending for objects that \emph{a priori} do not have to be smooth.

Knot energies can help to model repulsive forces of fibres.
The original \emph{Gedankenexperiment} by Fukuhara~\cite{fete}
was the deformation of a thin fibre charged with electrons lying in
a viscous liquid. There is indication for DNA molecules seeking to attain a minimum state of a suitable energy~\cite{moffatt}.
Attraction phenomena may also be modeled by a corresponding positive gradient flow~\cite{alt-felix}.

The first knot energy on smooth curves goes back to O'Hara~\cite{oha:en} who in 1991 defined the
functional that was called \emph{M\"obius energy} later on by Freedman, He, and Wang~\cite{fhw}.
It corresponds to the element $E^{2,1}$ of the two-parameter family of functionals
\begin{equation}
  E^{\a,p}(\g):=
  \int_{\R/\Z}\int_{-1/2}^{1/2}
  \br{\frac{1}{\abs{\g(u+w)-\g(u)}^\a} - \frac{1}{D_\g(u+w,u)^\a}}^p
  \abs{\dg(u+w)}\abs{\dg(u)} \d w\d u \label{eq:a}
\end{equation}
which O'Hara~\cite{oha:fam-en,oha:en-func} introduced shortly after.
Here $\a,p>0$, and $\g\in C^{0,1}\rzd$.
The quantity $D_\g(u+w,u)$ measures the intrinsic distance between $\g(u+w)$ and $\g(u)$ on the curve~$\g$.
Of particular interest is the subfamily
\begin{equation}\label{eq:Ea}
 E^{(\a)}:=E^{\a,1} \qquad \text{for } \a\in[2,3).
\end{equation}
There are numerous contributions concerning topology~\cite{oha:fam-en,oha:en-func,fhw},
regularity \cite{acfgh,oha:en,fhw,he:elghf,blatt-reiter,reiter:rkepdc,reiter:rtme},
and the corresponding gradient flow~\cite{he:elghf,blatt:gfm,blatt:gfoh}.
Numerical experiments have been carried out in~\cite{kusner-sullivan}, error estimates have been obtained in~\cite{RSi,RW}.

Another famous example of a knot energy is the reciprocal of \emph{thickness}
which can be characterized by means of the \emph{global radius of curvature}~$\rho[\gamma]$
defined by Gonzalez and Maddocks~\cite{gonz-madd}.
This leads to the concept of \emph{ideal knots},
minimizers of the \emph{ropelength}
(the quotient of length and thickness)
within a prescribed isotopy class.
Existence is discussed in~\cite{gmsm,cakusu,glll}
while the question of regularity turns out to be rather involved~\cite{schur-vdm:elg,schur-vdm:ideal,cfks}.
In fact, an {explicit} analytical characterization of
the shape of a (non-trivial) ideal knot has not been found yet,
so the state of the art is discretization and numerical visualization,
cf.~\cite{ACPR,cprvis,carl-gerl,carlen-et-al,gerlach:diss,gms,smutny}.
\emph{Maximizing} length for prescribed thickness
on the two-dimensional sphere~$\mathbb S^{2}$
leads to an interesting packing problem, see
Gerlach and von der Mosel~\cite{GvdMa,GvdM}.

Substituting some of the minimizations in the definition of thickness as proposed in~\cite[Sect.~6]{gonz-madd},
one derives three families of integral-based energies,
namely
\begin{align*}
 \mathscr U_p(\g) &:= \br{\int_{\R/\Z}\frac{\d s}{\inf_{\R/\Z\setminus\set s}\rho[\g](s,\cdot)^p}}^{1/p}, \\
 \mathscr I_p(\g) &:= \iint_{(\R/\Z)^2} \frac{\d s\d t}{\rho[\g](s,t)}, \\
 \mathscr M_p(\g) &:= \iiint_{(\R/\Z)^3} \frac{\d s\d t\d\sigma}{R(s,t,\sigma)^p},
\end{align*}
where $R(s,t,\sigma)$ denotes the radius of the circle passing through the three points $\g(s)$, $\g(t)$, $\g(\sigma)$.
These functionals have been thoroughly investigated by Strzelecki and von der Mosel~\cite{SM3},
Strzelecki, Szuma\'nska and von der Mosel~\cite{SM4,SM5},
and Hermes~\cite{hermes}.
The energy spaces are discussed in~\cite{blatt:imcc}.
Energies for higher-dimensional objects are considered in
Strzelecki and von der Mosel~\cite{SM1,SM2,SM6},
Kolasi\'nski~\cite{kola,kola2}, and
Kolasi\'nski, Strzelecki, and von der Mosel~\cite{KSM}.

The \emph{tangent-point energies}~\eqref{eq:tp-StrzvdM} are a variant of these ``three-point circle'' based functionals.
One just uses the radius of the smallest circle tangent to 
one point and going through another point on the curve instead
of the radius of the smallest circle going through three points on the curve.
The resulting energies already appeared as~$U_{p,2}[\mathcal C]$ in the article by Gonzalez and Maddocks~\cite[Sect.~6]{gonz-madd}.
Sullivan~\cite{sullivan:appr-rope} used these functionals to approach \emph{ropelength}.
In contrast to these classical energies,
the integrand of the generalized energies introduced in this article~\eqref{eq:tp} 
bare such an appealing geometric interpretation. 
But we will see that they have nicer analytic properties, basically due to the
fact that their first variation leads to non-degenerate elliptic operator.

%
% Some known fact about tangent point energies
%

Before presenting the results of this article, let
us briefly review the main known results on the 
tangent-point energies defined in~\eqref{eq:tp-StrzvdM}.  
The most striking observation Strzelecki and von der Mosel made in their
seminal paper~\cite{SM7}, is that if $\mathscr E_q(\Gamma)$, $q\ge2$, is finite then 
the image of $\Gamma$ is a one-dimensional topological manifold~\cite[Thms.~1.1 and~1.4]{SM7}
of class $C^{1,1-2/q}$ if $q>2$~\cite[Thm.~1.3]{SM7}.
The proof of this and all the other main results in the paper is based on 
exploiting a decay estimate of Jones' beta numbers. 
Still for $q>2$, they gave an explicit upper bound on the Hausdorff distance 
of two given curve in terms of their tangent-point energies implying ambient isotopy~\cite[Thm.~1.2]{SM7}.
Moreover, they could prove that in this case $\mathscr E_q$ is a knot energy~\cite[Prop.~5.1]{SM7}, which improves an earlier result by Sullivan~\cite[Prop.~2.2]{sullivan:appr-rope} that requires higher regularity.
The energy even is a \emph{strong} knot energy, i.~e.\@
for given bounds on energy and length there are only finitely many knot types having a representative that satisfies these bounds.

In fact one can strengthen the above-mentioned result of Strzelecki and
von der Mosel and show that $\mathscr E_q(\Gamma)$ is finite if and only
if the image of $\Gamma$ is an embedded manifold of class
$W^{2-1/q,q}\subset C^{1,1-2/q}$, see~\cite[Cor.~1.2]{blatt:estp}.
Results for higher-dimensional analoga to~$\mathscr E_q$ can be found
in~\cite{SM8,blatt:estp,KSM}.

%
% One can follow Heiko and Pawel ...
%

As $\E$ is (increasing in~$p$ and) decreasing in~$q$, the results by Strzelecki and von der Mosel~\cite{SM7} 
immediately carry over to
the $\E$-functionals~\eqref{eq:tp} with $p\ge2q$, $p\ge4$ via
\begin{equation}\label{eq:subfamily}
 \mathscr E_{p/2}=2^{p/2}\E[p,p/2],\qquad p\ge4.
\end{equation}
In fact we will show in Appendix~\ref{sect:beta}, that even for the full sub-critical
range
\begin{equation}\label{eq:sub-critical-range}
 p\in(q+2,2q+1), \qquad q>1
\end{equation}
the arguments in~\cite{SM7} can easily be adapted leading to
self-repulsiveness of the energies and H\"older regularity of the
first derivative. As in \cite{SM7} this can be used to
show for example that these energies are strong and that minimizers
exist in every knot class.

%
%  ... but we do not have to : a different approach
%

Since the arguments in \cite{SM7} are quite involved and technical,
we will present a completely independent and fast approach to these 
type of questions for curves that are \emph{a~priori} injective, 
continuously differentiable and parametrized by arc-length. 
This approach is based on techniques developed in 
\cite{blatt:estp}.

\begin{figure}[h]\label{fig:range}
 \includegraphics{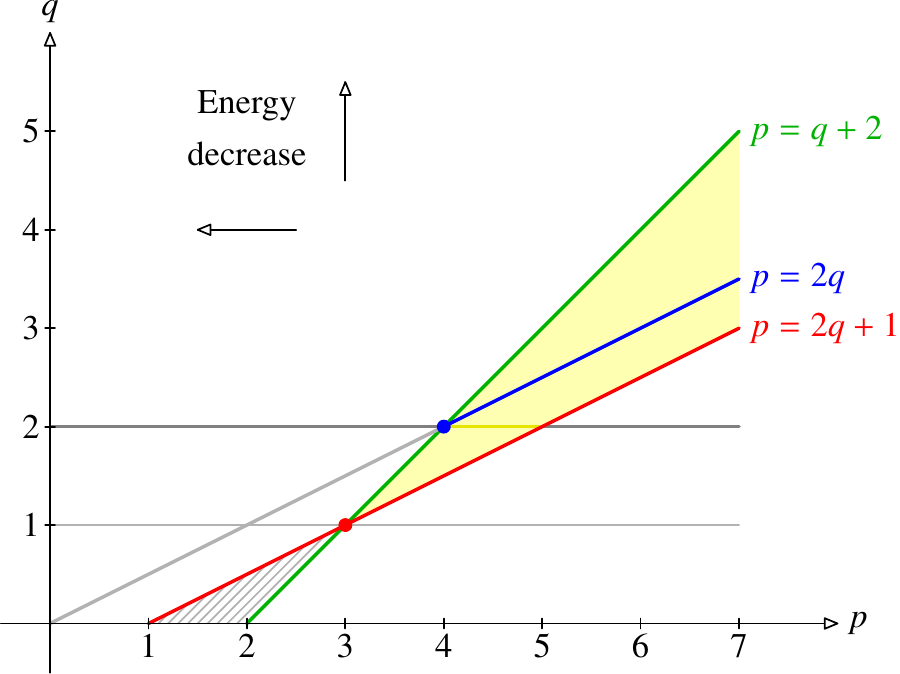}
 \centering
 \caption{The range of the tangent-point functionals~\eqref{eq:tp}.
 Above the green line, there is no self-repulsion (Remark~\ref{rem:self-repulsion}). On the red line and below, the functionals are singular, i.~e.\@ they identically take the value $+\infty$ (Proposition~\ref{prop:singular}).
 They are knot energies on the yellow area~\eqref{eq:sub-critical-range}
 (Proposition~\ref{prop:strong-knot-energy}).
 For this range of parameters, we can classify all curves of finite energy (Theorem~\ref{thm:energ-space}) 
 and prove the existence
 of minimizers within knot classes (Theorem~\ref{thm:existence}) and of the first variation (Theorem~\ref{thm:first-var}) on these spaces.
 For $\E[p,2]$, $p\in(4,5)$, marked by the yellow line~\eqref{eq:regularity-range},
 we obtain regularity of stationary points (Theorem~\ref{thm:smooth}).
 In the hatched area we find the strange behavior that the functionals take finite energy on polygons but not on closed $C^{3}$-curves (Remark~\ref{rem:strange}).
 The blue line visualizes the one-parameter subfamily~\eqref{eq:tp-StrzvdM},
 the original integral tangent-point energies.}
\end{figure}

The first result we get for the subcritical range of parameters~\eqref{eq:sub-critical-range}
is the following characterization of curves of finite energy
among all injective $C^1$-curves 
parametrized by arc-length.

\begin{theorem}[Energy spaces]\label{thm:energ-space}
 Assume~\eqref{eq:sub-critical-range} and
 let $\g \in C^1(\R/\Z,\R^{n})$ be an injective curve parametrized by 
 arc-length.
 Then $\E(\g)<\infty$ if and only if~$\g \in W^{(p-1)/q,q}$. Moreover,
 one then has, for constants $C,\beta>0$ depending on $p,q$ only,
 \begin{equation}\label{eq:energy-bound}
  \|\g\|^q_{\W[(p-1)/q,q]} \leq C\br{\E(\g) + \E(\g)^\beta}.
 \end{equation}
\end{theorem}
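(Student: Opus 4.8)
The plan is to prove the two directions separately, and to note that the quantitative estimate \eqref{eq:energy-bound} is really the heart of the matter: the implication "$\g\in W^{(p-1)/q,q}\Rightarrow\E(\g)<\infty$" is the (comparatively) easy direction and the converse follows by making the argument for \eqref{eq:energy-bound} effective. Throughout, since $\g$ is parametrized by arc-length, $|\dg|\equiv 1$, and the denominator in the integrand of \eqref{eq:tp} can be rewritten geometrically: for an arc-length curve one has $\dist(\ell(u+w),\g(u))=|P^\perp_{\dg(u+w)}(\g(u)-\g(u+w))|$, i.~e.\@ the length of the component of the difference vector orthogonal to the tangent at the first point. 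So up to a harmless symmetrization in the two arguments,
\begin{equation*}
 \E(\g)\sim\int_{\R/\Z}\int_{-1/2}^{1/2}\frac{\bigl|P^\perp_{\dg(u)}\bigl(\g(u+w)-\g(u)\bigr)\bigr|^q}{|\g(u+w)-\g(u)|^p}\d w\d u.
\end{equation*}

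\textbf{Step 1: lower bound on the integrand in terms of a difference quotient of $\dg$.}
First I would show pointwise that, for $|w|$ small, $|P^\perp_{\dg(u)}(\g(u+w)-\g(u))|$ controls $\int_0^w\int_0^{w}$-type double integral of $|\dg(u+\sigma)-\dg(u+\tau)|$, hence essentially $|w|^{1+s}$ times a localized Slobodeckij difference quotient of $\dg$ with $s=(p-1)/q-1$. Concretely, writing $\g(u+w)-\g(u)=\int_0^w\dg(u+\sigma)\d\sigma$ and projecting orthogonally to $\dg(u)$, the piece $P^\perp_{\dg(u)}\int_0^w\dg(u+\sigma)\d\sigma=\int_0^w P^\perp_{\dg(u)}(\dg(u+\sigma)-\dg(u))\d\sigma$ can be compared to the second-order difference structure that defines the $W^{s,q}$-seminorm of $\dg$. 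Simultaneously, the numerator $|\g(u+w)-\g(u)|$ is comparable to $|w|$ on the scale where the curve is bi-Lipschitz; here I would invoke that an injective arc-length $C^1$-curve has a positive bi-Lipschitz constant on a uniform scale $\delta_0>0$ (a compactness/continuity argument), so that $c|w|\le|\g(u+w)-\g(u)|\le|w|$ for $|w|\le\delta_0$. Plugging these in, the integrand is bounded below by $c\,|w|^{-n}$ times the appropriate $q$-th power of a difference quotient, and integrating in $w$ and $u$ reproduces $[\dg]_{W^{s,q}}^q$ up to constants — this, together with $\lnorm[q]{\dg}^q=1$ and $\lnorm[q]{\g}\le\const$, gives the Slobodeckij norm of $\g$ and hence \eqref{eq:energy-bound}, provided the bi-Lipschitz scale $\delta_0$ itself is controlled by the energy. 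That last point is exactly where the term $\E(\g)^\beta$ enters: a lower bound on $\delta_0$ (equivalently, an upper bound on how close distinct strands can come) must be extracted from finiteness of $\E$. I would get it the way it is done in \cite{blatt:estp}: if two far-apart (in arc-length) points are Euclidean-close, then on a whole neighborhood the denominator $|\g(u+w)-\g(u)|$ is tiny while the numerator $|P^\perp(\cdots)|$ is not, forcing a large contribution to $\E$; quantifying this yields $\delta_0\ge c\,\E(\g)^{-\beta}$ for suitable $\beta$.

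\textbf{Step 2: the other direction and the scale separation.}
For the converse — $\g\in W^{(p-1)/q,q}$ (with $|\dg|=1$, injective) implies $\E(\g)<\infty$ — I would split the domain of the $w$-integral into the "near-diagonal" regime $|w|\le\delta_0$ and the "far" regime $|w|>\delta_0$. On the far regime, injectivity and compactness give $|\g(u+w)-\g(u)|\ge c>0$, so the integrand is bounded and that part of $\E$ is finite trivially. On the near-diagonal regime one uses the upper counterpart of Step 1: $|P^\perp_{\dg(u)}(\g(u+w)-\g(u))|\le\int_0^{|w|}|\dg(u+\sigma)-\dg(u)|\d\sigma$, combined with the bi-Lipschitz lower bound on the denominator, reduces the finiteness of this part of $\E$ to finiteness of $[\dg]_{W^{(p-1)/q-1,q}}^q$, which is exactly the $W^{(p-1)/q,q}$-hypothesis on $\g$. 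One has to be a little careful that the exponent bookkeeping closes: the subcriticality assumption \eqref{eq:sub-critical-range}, $q+2<p<2q+1$, is precisely what makes $s=(p-1)/q-1$ lie in $(0,1)$ — i.~e.\@ $q<p-1<2q$ — and what makes the diagonal integrals converge; I would flag at the outset that every power-counting step below uses $p\in(q+2,2q+1)$.

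\textbf{Main obstacle.}
The genuinely delicate step is establishing the quantitative bi-Lipschitz bound $\delta_0\gtrsim\E(\g)^{-\beta}$ and, hand-in-hand with it, the \emph{lower} bound in Step 1 — i.~e.\@ showing that the orthogonal projection $|P^\perp_{\dg(u)}(\g(u+w)-\g(u))|$ is not merely bounded by, but is \emph{comparable} to (from below, in an integrated sense), the Slobodeckij difference structure of $\dg$. The pointwise inequality goes the easy way; to reverse it one must rule out the tangential direction "eating" all the curvature, and this requires either a second-order Taylor-type argument with control of the remainder, or the kind of iterated-difference / dyadic decomposition used in \cite{blatt:estp}. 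I expect this to be the technical core; the rest is careful but routine exponent arithmetic using \eqref{eq:sub-critical-range} and standard interpolation/Fubini manipulations.
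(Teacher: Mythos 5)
There is a genuine gap at the step you yourself flag as the core: the lower bound of the energy by the Slobodecki{\u\i} seminorm of $\dg$. The pointwise inequality $\abs{\PP\br{\T\g}}\ge c\abs w\abs{\T\dg}$ that your Step~1 needs is false for a single projection: a $C^1$ wiggle whose normal component $\PP\br{\dg(u+\s)-\dg(u)}$ integrates to zero over $\s\in[0,w]$ while $\dg(u+w)\neq\dg(u)$ makes the left-hand side vanish. Neither of your proposed repairs works as stated --- in particular a second-order Taylor expansion with remainder is unavailable for a curve that is merely $C^1$. The paper's device is to symmetrize the integrand in $u\leftrightarrow u+w$, so that \emph{both} normal projections $\PP$ and $\PP[\dg(u+w)]$ appear, and then to use
\begin{equation*}
\abs{\PP[\dg(u+w)]a}+\abs{\PP a}\ \ge\ \abs{\P[\dg(u+w)]a-\P a}
\end{equation*}
together with the identity, valid for unit vectors $e_1,e_2$,
\begin{equation*}
\abs{\sp{a,e_1}e_1-\sp{a,e_2}e_2}^2=\abs{\sp{a,e_1}-\sp{a,e_2}}^2+\sp{a,e_1}\sp{a,e_2}\abs{e_1-e_2}^2,
\end{equation*}
applied to $a=\T\g$, $e_1=\dg(u)$, $e_2=\dg(u+w)$. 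On the scale where $\abs{\T\dg}\le\frac12\sqrt2$ one has $\sp{\T\g,\dg(u)}\sp{\T\g,\dg(u+w)}\ge\frac9{16}w^2$, whence $\abs{\PP[\dg(u+w)]\T\g}+\abs{\PP\T\g}\ge\frac34\abs w\abs{\T\dg}$; combined with the trivial bound $\abs{\T\g}\le\abs w$ in the denominator this closes the estimate. Without this (or an equivalent) two-point argument your Step~1 does not go through.

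Second, you misplace the origin of the $\E(\g)^\beta$ term. No quantitative bi-Lipschitz (self-avoidance) estimate is needed for \eqref{eq:energy-bound}: lower-bounding the energy only uses $\abs{\T\g}\le\abs w$, which is free for arc-length curves. The scale $\delta$ that has to be controlled is the largest one on which $\abs{\T\dg}\le\frac12\sqrt2$ --- a tangent-oscillation scale, not a self-distance scale --- and the paper bounds it from below by a Campanato--Morrey estimate $\lnorm[\infty]{\dg(\cdot+w)-\dg(\cdot)}\le C\E(\g)^{1/q}\abs w^{(p-2)/q-1}$, itself a consequence of the local seminorm bound just established. Your route, extracting $\delta_0\ge c\,\E(\g)^{-\beta}$ from the claim that near a close approach of distant strands ``the numerator is not small'', is both unnecessary and unsound as sketched: for nearly tangential approaches the numerator $\abs{\PP\br{\T\g}}$ is small as well, which is exactly why self-repulsion fails for $p<q+2$ (Remark~\ref{rem:self-repulsion}). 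Indeed the paper's uniform bi-Lipschitz estimate (Proposition~\ref{prop:bi-Lipschitz}) is a \emph{consequence} of Theorem~\ref{thm:energ-space}, obtained afterwards by a compactness and scaling argument, not an ingredient of its proof.
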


\begin{remark}[Initial regularity]\label{rem:C1}
Note that our method of proof works entirely
without using the techniques by von der Mosel and Strzelecki~\cite{SM7} --- if one always assumes curves to be continuously differentiable
as stated in the preceding Theorem~\ref{thm:energ-space}.

However, the requirement of initial $C^{1}$-regularity can be omitted
as the image of finite-energy curves is an embedded $C^{1,\alpha}$-manifold by Theorem~\ref{thm:topo}, which is easily derived from~\cite{SM7}.
This shows that the energy of an arbitrary absolutely continuous curve is finite if and only if
its image is an embedded manifold of class $W^{(p-1)/q,q}$.
\end{remark}

%
% we can following this approach even show the existence of minimizers
%

We will then show how to combine Theorem~\ref{thm:energ-space}
with a bi-Lipschitz estimate to obtain the existence of minimizers in every 
knot class:

\begin{theorem}[Existence of minimizers within knot classes]\label{thm:existence}%\hfill\\
 Assume~\eqref{eq:sub-critical-range}. \\ Then, in any knot class there is a minimizer of 
 $\E$ for~\eqref{eq:sub-critical-range} among all injective, regular curves 
 $\g \in C^1(\R / \Z, \R^n)$.
\end{theorem}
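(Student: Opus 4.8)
The plan is a direct-method argument in the Sobolev-Slobodecki{\u\i} space $\W[(p-1)/q,q]$: Theorem~\ref{thm:energ-space} supplies coercivity, the compact Sobolev embedding supplies compactness, a uniform bi-Lipschitz estimate preserves the knot type in the limit, and Fatou's lemma gives lower semicontinuity of $\E$.

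Fix a knot class $\mathcal K$. Since $\E$ is reparametrization invariant, I would minimize among those injective curves $\g\in C^1(\R/\Z,\R^n)$ that are parametrized by arc-length (hence of length one) and represent $\mathcal K$; any smooth embedded representative of $\mathcal K$ lies in $\W[(p-1)/q,q]$ and therefore has finite energy by Theorem~\ref{thm:energ-space}, so $m:=\inf\E$ over this admissible class is finite. Pick a minimizing sequence $(\g_k)$ with $\E(\g_k)\to m$, in particular $\E(\g_k)\le M$ for some $M<\infty$. By the bound~\eqref{eq:energy-bound}, $\|\g_k\|^q_{\W[(p-1)/q,q]}\le C(M+M^\beta)$, so $(\g_k)$ is bounded in the reflexive space $\W[(p-1)/q,q]\rzd$, and a subsequence converges weakly there to some $\g$. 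Since $p>q+2$ we have $\frac{p-1}{q}-\frac1q>1$, so $\W[(p-1)/q,q]\rzd$ embeds compactly into $C^{1,\alpha'}(\R/\Z,\R^n)$ for every $0<\alpha'<\frac{p-q-2}{q}$; hence, after passing to a further subsequence, $\g_k\to\g$ in $C^1$. From $\abs{\g_k'}\equiv1$ we obtain $\abs{\g'}\equiv1$, so $\g\in C^1(\R/\Z,\R^n)$ is again an arc-length parametrized curve.

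The main obstacle is that $C^1$-convergence by itself does not prevent strands of $\g$ from touching, so the knot type could degenerate in the limit. Here I would establish a \emph{uniform bi-Lipschitz estimate} (in the spirit of the techniques of~\cite{blatt:estp}): there is $c=c(M)>0$ such that every arc-length parametrized curve with $\E\le M$ satisfies $\abs{\g(s)-\g(t)}\ge c\,\dist_{\R/\Z}(s,t)$ for all $s,t$. The mechanism is a quantitative form of self-repulsiveness: if two points that are far apart along the curve came close in space, a portion of definite length would nearly double back, making $\tilde r_\g^{(p,q)}$ small --- hence the integrand of $\E$ large --- on a set of definite measure, forcing $\E$ to be large. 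Passing to the limit, $\g$ obeys the same inequality and is in particular injective; and since $\g_k\to\g$ in $C^1$ with $\g$ an embedding, the standard tubular-neighborhood argument shows that $\g_k$ is ambient isotopic to $\g$ for all large $k$. Thus $\g$ represents $\mathcal K$ and is admissible.

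Finally, for lower semicontinuity fix $s\ne t$: injectivity of $\g$ gives $\g(s)\ne\g(t)$, while $\g_k\to\g$ in $C^1$ gives $\g_k(s)\to\g(s)$, $\g_k(t)\to\g(t)$ and $\ell_k(t)=\g_k(t)+\R\g_k'(t)\to\ell(t)=\g(t)+\R\g'(t)$, so the nonnegative integrand $\dist\br{\ell_k(t),\g_k(s)}^q\abs{\g_k(s)-\g_k(t)}^{-p}$ converges, for a.e.\ $(s,t)\in(\R/\Z)^2$, to the corresponding integrand of $\E(\g)$. Fatou's lemma yields $\E(\g)\le\liminf_k\E(\g_k)=m$, and since $\g$ is admissible, $\E(\g)=m$. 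Hence $\g$ is a minimizer of $\E$ within $\mathcal K$, which is what we wanted.
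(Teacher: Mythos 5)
Your overall architecture coincides with the paper's: minimize over arc-length parametrized injective $C^1$ curves, get $C^1$-compactness of the minimizing sequence from the energy bound of Theorem~\ref{thm:energ-space}, use a uniform bi-Lipschitz estimate to keep the limit embedded, use $C^1$-openness of knot classes to preserve the knot type, and use Fatou's lemma for lower semicontinuity. One small point: the energy controls only the seminorm of $\g'$, so before extracting a convergent (sub)sequence you must normalize by translations (e.g.\ set $\g_k(0)=0$), exactly as in the proof of Theorem~\ref{thm:sequentiallycompact}.

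The genuine gap is in the one nontrivial ingredient, the uniform bi-Lipschitz estimate, for which the mechanism you describe does not work as stated. If two strands that are far apart in parameter come within distance $\delta$ of each other in space, the integrand $\dist\br{\ell(t),\g(s)}^q\abs{\g(s)-\g(t)}^{-p}$ is \emph{not} large on a set of definite measure: for two transversal (or parallel) strands the computation~\eqref{eq:cross} shows the mutual energy is controlled by $\int_0^{\sqrt 2}\br{r^2+\delta^2}^{(q-p)/2}r\d r$, which remains \emph{bounded} as $\delta\searrow0$ whenever $p<q+2$. So there is no pointwise largeness, only an integral divergence, and that divergence happens precisely because $p\ge q+2$; your sketch never invokes the subcriticality $p>q+2$, which is essential — Remark~\ref{rem:self-repulsion} shows the bi-Lipschitz estimate (and the theorem) fail for $p<q+2$. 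The paper closes this gap in Proposition~\ref{prop:bi-Lipschitz} by rescaling the two nearest strands by the minimal distance $S$, proving via a compactness argument (Lemma~\ref{lem:last}) that the mutual energy of the normalized pair is bounded below by some $c(\a,M)>0$, and then using the scaling identity $\E(\g_1,\g_2)\le S^{p-q-2}\E(\g)$ to get $S\ge\br{c/M}^{1/(p-q-2)}$. Without this step (or the beta-number machinery of~\cite{SM7}) the proof is incomplete; with it, the remaining steps you give are correct and match the paper's.
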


%
% The first variation
%

In order to study stationary points of the
energy, we derive a formula for the first variation
on the space of injective and regular curves of finite energy.
To shorten notation we abbreviate
\begin{equation}\label{eq:short-notation}
 \T\bullet:=\D\bullet.
\end{equation}
Let 
\begin{align} 
\P a := \sp{a,\fracabs{\dg(u)}}\fracabs{\dg(u)}, \qquad
\PP a := a - \P a \qquad\text{for }a\in\R^{n} 
\end{align}
be the projection onto the tangential and normal part along $\g$
respectively.

\begin{theorem}[First variation]\label{thm:first-var}
 For $p,q$ satisfying~\eqref{eq:sub-critical-range}
 let $\g\in\W[(p-1)/q,q]\rzd$ be injective and parametrized by arc-length.
 Then, for any $h\in\W[(p-1)/q,q]\rzd$, the first variation of~$\E$ at~$\g$ in direction~$h$
 exists and amounts to
 \begin{align}
  &\delta\E(\g,h) \nonumber\\
 &=\Wint \Bigg\{q\frac{\sp{\PP\br{\T\g-w\dg(u)},\T h-\sp{\T\g,\dg(u)}\dh(u)}}{\abs{\T\g}^p} \abs{\PP\br{\T\g-w\dg(u)}}^{q-2} \nonumber\\
 &\qquad\qquad\qquad{} - p\frac{\abs{\PP\br{\T\g}}^q\sp{\T\g,\T h}}{\abs{\T\g}^{p+2}} \label{eq:dE}\\
 &\qquad\qquad\qquad{} + \frac{\abs{\PP\br{\T\g}}^q}{\abs{\T\g}^{p}}\br{\sp{\dg(u),\dh(u)}+\sp{\dg(u+w),\dh(u+w)}} \Bigg\}.\nonumber
 \end{align}
\end{theorem}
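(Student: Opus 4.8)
The plan is to compute $\delta\E(\g,h)=\frac{\d}{\d\tau}\big|_{\tau=0}\E(\g+\tau h)$ by differentiating under the double integral, treating the three $\tau$-dependent building blocks of the integrand separately. Writing the integrand of $\E$ at the perturbed curve $\gt=\g+\tau h$ as
\[
 \frac{\abs{\PPT\br{\T\gt-w\dgt(u)}}^q}{\abs{\T\gt}^p}\,\abs{\dgt(u+w)}\,\abs{\dgt(u)},
\]
one recognizes that the denominator factor $\abs{\T\gt}^{-p}$, the two arclength factors $\abs{\dgt(\cdot)}$, and the normal-distance factor $\abs{\PPT(\cdot)}^q$ each contribute one of the three lines of \eqref{eq:dE}. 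For the denominator one uses $\dd[\tau]\abs{\T\gt}=\sp{\T\g,\T h}/\abs{\T\g}$, giving the $-p\,\abs{\T\g}^{-p-2}\abs{\PP(\T\g)}^q\sp{\T\g,\T h}$ term after combining with the (unperturbed) arclength factors, which at $\tau=0$ equal $1$ since $\g$ is parametrized by arclength. For the arclength factors one uses $\dd[\tau]\abs{\dgt(u)}=\sp{\dg(u),\dh(u)}$ (again since $\abs{\dg(u)}=1$), producing the third line. The first line comes from differentiating $\abs{\PPT(\T\gt-w\dgt(u))}^q$; here one must differentiate both the vector $\T\gt-w\dgt(u)$ and the projection $\PPT$ itself, and the claim is that the projection's variation contributes nothing to the normal part — indeed $\dd[\tau]\abs{\PPT v_\tau}^2 = 2\sp{\PPT v_\tau,\dd[\tau]v_\tau}$ because $\PPT$ is an orthogonal projection and $\dd[\tau](\PPT)$ maps into the orthogonal complement of $\PPT$'s range, which is exactly why the expression $\sp{\T h-\sp{\T\g,\dg(u)}\dh(u),\,\PP(\cdot)}$ appears rather than something involving $\dd[\tau]P$. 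The subtracted term $\sp{\T\g,\dg(u)}\dh(u)$ inside comes from $\dd[\tau](w\dgt(u))$ being projected, but more carefully from reorganizing $\dd[\tau](\T\gt - w\dgt(u))$ and using that only the component along the fixed direction $\dg(u)$ of this derivative interacts nontrivially after applying $\PPT$; this algebraic rearrangement uses $\PP^\perp\dg(u)=0$.

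The key analytic step — and the main obstacle — is justifying the differentiation under the integral sign, i.e.\@ producing an integrable $\tau$-independent dominating function on a neighbourhood of $\tau=0$, given only that $\g\in\W[(p-1)/q,q]$ (equivalently $\E(\g)<\infty$ by Theorem~\ref{thm:energ-space}) and $h\in\W[(p-1)/q,q]$. Near the diagonal $w=0$ the integrand and its $\tau$-derivative involve the delicate cancellation that makes $\abs{\PP(\T\g)}$ and $\abs{\T\g}$ comparable to curvature-type difference quotients; the plan is to bound the difference quotients of $h$ and of $\g$ appearing in the derivative by the respective Slobodeckiĭ-type integrands, use the algebraic inequality controlling $\abs{a+b}^{q}-\abs a^{q}$ by $q\abs b\,(\abs a+\abs b)^{q-1}$ (and its pointwise derivative analogue) to split off the $h$-contributions, and then apply Hölder's inequality in the two exponents $q$ and $q' = q/(q-1)$ together with the finiteness $\int\int \abs{\PP(\T\g-w\dg(u))}^q\abs{\T\g}^{-p}<\infty$. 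Away from the diagonal the integrand is smooth in $\tau$ with bounds depending only on $\|\g\|_{C^1}$, $\|h\|_{C^1}$ and the bi-Lipschitz constant of $\g$, so no difficulty arises there. I expect the bi-Lipschitz bound $\abs{\g(u+w)-\g(u)}\geq c\abs w$ (valid for injective $C^1$ arclength curves of finite energy, cf.\@ the estimates behind Theorem~\ref{thm:energ-space}) to be the crucial ingredient keeping the denominator $\abs{\T\g}^{p}$ away from zero on the whole domain.

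Concretely I would proceed as follows. First, fix $\g$ and $h$ as in the statement and, using the bi-Lipschitz property, establish two-sided bounds $c\abs w\le \abs{\T\g}\le C\abs w$ on $(\R/\Z)\times(-\tfrac12,\tfrac12)$, and the same for $\gt$ uniformly in $\abs\tau\le\tau_0$ small. Second, write out $\dd[\tau]$ of each of the three factors explicitly at general $\tau$, collect terms, and identify the limit at $\tau=0$ as the right-hand side of \eqref{eq:dE}; this is the routine calculus part, the only subtlety being the projection-derivative cancellation, which I would record as a short lemma: for a unit vector $e$ and the projection $Q^\perp=\id-\sp{\cdot,e}e$ one has $\sp{Q^\perp v,\,\dd[\tau](Q^\perp v)} = \sp{Q^\perp v,\,Q^\perp\dd[\tau]v} + \sp{Q^\perp v,\,\dd[\tau](Q^\perp)v}$ with the last term vanishing because $\dd[\tau](Q^\perp)v \perp Q^\perp v$ when $e=e(\tau)$ stays unit-length. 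Third, dominate: bound the $\tau$-derivative of the integrand by a sum of terms each of which, after Hölder in exponents $q$ and $q'$, is controlled by $\|h\|_{\W[(p-1)/q,q]}$ times a power of $\E(\g)$, using Theorem~\ref{thm:energ-space} to convert the Sobolev-Slobodeckiĭ norm of $\g$ into the energy. Fourth, invoke dominated convergence to pull the $\tau$-derivative inside, conclude existence of $\delta\E(\g,h)$ and the formula \eqref{eq:dE}, and remark that the resulting expression is manifestly linear and bounded in $h\in\W[(p-1)/q,q]$, so $\delta\E(\g,\cdot)$ is a bounded linear functional on that space.
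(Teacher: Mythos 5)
Your overall route coincides with the paper's (difference quotient, factor-by-factor pointwise limit, dominated convergence with a majorant built from the bi-Lipschitz estimate, the elementary bound for $\abs a^q-\abs b^q$, Jensen/H\"older, and the Slobodecki{\u\i} integrability of $\dg$ and $\dh$), but the calculus step that is supposed to produce the first line of \eqref{eq:dE} rests on a false lemma. You claim $\dd\abs{\PPT v_\tau}^2=2\sp{\PPT v_\tau,\dd v_\tau}$ on the grounds that $\dd\br{\PPT}$ maps into the orthogonal complement of the range of $\PPT$. Writing $P^\perp_\tau=\id-e_\tau\otimes e_\tau$ with the unit vector $e_\tau=\dgt(u)/\abs{\dgt(u)}$ and $\dot e_\tau:=\dd e_\tau\perp e_\tau$, one has $\br{\dd P^\perp_\tau}v=-\sp{v,e_\tau}\dot e_\tau-\sp{v,\dot e_\tau}e_\tau$, whose first summand lies \emph{in} the range of $P^\perp_\tau$; consequently
\begin{equation*}
 \sp{P^\perp_\tau v,\br{\dd P^\perp_\tau}v}=-\sp{v,e_\tau}\sp{v,\dot e_\tau},
\end{equation*}
which does not vanish, because the vector $v_\tau=\T\gt-w\dgt(u)$ is \emph{not} in the range of $\PPT$. (The identity $P\dot PP=0$, valid for any smooth family of projections, only kills this term when $v=Pv$.) Since $\dot e_\tau|_{\tau=0}=\PP\dh(u)$, this "missing" contribution equals $-\sp{v,\dg(u)}\sp{\PP v,\dh(u)}$, and adding it to $\sp{\PP v,\T h-w\dh(u)}$ while using $\sp{v,\dg(u)}=\sp{\T\g,\dg(u)}-w$ is exactly what converts the factor $w$ into $\sp{\T\g,\dg(u)}$. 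Carrying out your plan literally you would obtain $\sp{\PP\br{\T\g-w\dg(u)},\T h-w\dh(u)}$ in the first line instead of the stated $\sp{\PP\br{\T\g-w\dg(u)},\T h-\sp{\T\g,\dg(u)}\dh(u)}$; the two differ by $\sp{\T\g-w\dg(u),\dg(u)}\sp{\PP\br{\T\g},\dh(u)}$, which is nonzero in general. Your fallback attribution of the subtracted term to "$\dd\br{w\dgt(u)}$ being projected" cannot repair this: $\dd\br{w\dgt(u)}=w\dh(u)$, and no rearrangement using $\PP\dg(u)=0$ turns the scalar $w$ into $\sp{\T\g,\dg(u)}$. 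The paper records the correct ingredient as $\PP\br{\dd|_{\tau=0}\PPT v}=-\sp{v,\dg(u)}\PP\dh(u)$ and then verifies the algebraic identity $\sp{\PP\T\g,\T h}-\sp{\T\g,\dg(u)}\sp{\PP\T\g,\dh(u)}=\sp{\PP\br{\T\g-w\dg(u)},\T h-\sp{\T\g,\dg(u)}\dh(u)}$.

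Apart from this, your outline of the domination argument is essentially the paper's: uniform lower bounds on $\abs{\dgt}$ and the bi-Lipschitz estimate keep the denominators of size $\abs w$, the numerator differences are controlled by $\abs{\T\g-w\dg(u)}^q+\abs{\T h-w\dh(u)}^q$, and Jensen's inequality reduces everything to the finite quantities $\seminorm\dg_{\W[(p-1)/q-1,q]}^q$ and $\seminorm\dh_{\W[(p-1)/q-1,q]}^q$. Once you replace the false projection lemma by the correct derivative of $\PPT$, the proof goes through as planned.
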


Be aware that, in contrast to O'Hara's knot energies,
we do not need a principal value to express the first variation here.
The same situation applies to the
integral Menger curvature functionals, see Hermes~\cite{hermes}.

In this article, we only calculate the first variation  at
arc-length parametrized curves
in order to make the proof as simple as possible.
However, adapting the techniques from~\cite{blatt-reiter2}, one can even derive
continuous differentiability of $\E$ on the set of all injective regular
curves in $\W[(p-1)/q,q]\rzd$.

For the non-degenerate case $q=2$ we then finally 
study the regularity of stationary points of finite energy,
i.~e.\@ curves $\g \in \W[(p-1)/2,2]\rzd$ where
\begin{equation}\label{eq:regularity-range}
 p\in(4,5), \qquad q=2.
\end{equation}
We will see that
those are smooth --- which in a sense is a justification for
inventing these new knot energies in the first place.

%
% Regularity theorem of stationary points
%

\begin{theorem}[Stationary points of ${\E[p,2]}$ are smooth]\label{thm:smooth}\hfill\\
 For $p\in(4,5)$, let $\g\in\W[(p-1)/2,2]\rzd$ be a stationary point of $\E[p,2]$ with respect to fixed length, injective and parametrized by arc-length.
 Then $\g\in C^{\infty}$.
\end{theorem}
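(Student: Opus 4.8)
The plan is to prove smoothness of stationary points of $\E[p,2]+\lambda\,\text{length}$ by a bootstrap argument, starting from the initial regularity $\g\in\W[(p-1)/2,2]\rzd$ guaranteed by Theorem~\ref{thm:energ-space}, and using the structure of the first variation formula~\eqref{eq:dE} as the Euler--Lagrange equation. The key point — emphasised already in the abstract — is that for $q=2$ the leading-order term of $\delta\E(\g,\cdot)$ is a non-degenerate elliptic operator (of fractional order $p-1$, i.e.\ a multiple of $(-\Delta)^{(p-1)/2}$ applied to $\g$ modulo lower-order terms), rather than a degenerate one as happens for the classical tangent-point energies. So the first task is to rewrite~\eqref{eq:dE}, after extracting the principal part, in the schematic form
\begin{equation*}
 Q(\g)\,h = \text{(lower-order remainder)}(\g)\,h \qquad\text{for all admissible }h,
\end{equation*}
where $Q(\g)$ is, up to a positive bounded-below factor depending on $\g$, a fractional Laplacian of order $p-1$, and the remainder collects terms that are of strictly lower differentiability order, or depend on $\g$ only through quantities with one more derivative of room to spare.

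Concretely, I would first establish a \emph{product/commutator estimate} package in the Sobolev--Slobodecki{\u\i} scale $\W[s,q]$: that multiplication by a function which is itself in a good enough $\W[s,q]$-space preserves the relevant spaces, and that the bilinear remainder terms in~\eqref{eq:dE} map $\W[s+\sigma,2]\times\W[s+\sigma,2]$ into a space one notch better than what the principal term $Q(\g)$ controls. This is exactly the type of analysis carried out for the M\"obius energy in \cite{blatt-reiter} and \cite{reiter:rtme} and for the tangent-point energies in \cite{blatt:estp}; the difference metrics $\T\g=\g(u+w)-\g(u)$, the projections $\PP{}$, and the denominators $\abs{\T\g}^{-p}$ all need to be expanded (Taylor expansion in $w$, using that $\g$ is arc-length parametrised so $\abs{\dg}\equiv1$ and $\sp{\dg,\g''}=0$) to isolate the $(-\Delta)^{(p-1)/2}\g$ contribution with an explicit positive coefficient, and everything else must be shown to be a remainder. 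Once this is in place, the bootstrap is the standard one: if $\g\in\W[s,2]$ with $s\ge(p-1)/2$, feed this regularity into the remainder, invert the elliptic operator $Q(\g)$ (which is possible because its symbol is bounded below, using the constant-coefficient operator plus a Schauder/continuity-method perturbation since $\g$ is $C^{1,\alpha}$ by Theorem~\ref{thm:topo}), and conclude $\g\in\W[s',2]$ for some $s'>s$ with a fixed gain; iterating gives $\g\in\W[s,2]$ for all $s$, hence $\g\in C^\infty$. The Lagrange multiplier term $\lambda\,\delta(\text{length})(\g,h)=\lambda\Wint\!\big(\sp{\dg(u),\dh(u)}+\dots\big)$ is itself of the same order as (part of) the last line of~\eqref{eq:dE} and is handled in the same step; I would also note that the arc-length constraint can be removed by a reparametrisation argument, or absorbed into $\lambda$ as a function, exactly as in \cite{blatt-reiter2}.

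**The main obstacle** I expect is the first half: cleanly extracting the principal elliptic part of~\eqref{eq:dE} with a \emph{strictly positive} leading coefficient and showing non-degeneracy holds \emph{only} at $q=2$ (and not for general $q$). The term $\sp{\PP(\T\g-w\dg(u)),\T h-\sp{\T\g,\dg(u)}\dh(u)}\abs{\PP(\T\g-w\dg(u))}^{q-2}/\abs{\T\g}^p$ is the delicate one: for $q=2$ the factor $\abs{\PP(\T\g-w\dg(u))}^{q-2}$ is simply $1$, which is what makes the bilinear form comparable to the Gagliardo form of $(-\Delta)^{(p-1)/2}$ (after dividing by $\abs w^{p}\sim\abs w^{q+(p-1)-1}\cdot\abs w$), whereas for $q\ne2$ this factor carries the direction of the normal component of the difference quotient and makes the operator degenerate along the tangent direction — this is presumably why the theorem is stated only for $q=2$. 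Getting the algebra of the projections and the $w$-expansions right, keeping track of which pieces are genuinely lower order, and verifying the coercivity/invertibility of $Q(\g)$ uniformly along the bootstrap (using only $C^{1,\alpha}$-closeness of $\g$ to its tangent line at each scale) is the technical heart of the proof; the iteration itself is then routine once the right functional-analytic lemmas, mostly imported from \cite{blatt:estp} and \cite{blatt-reiter}, are assembled.
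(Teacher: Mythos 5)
Your plan is essentially the paper's proof: split the first variation into a principal elliptic part of order $p-1$ plus a remainder, show the remainder is of strictly lower order using product and chain rules in the Sobolev--Slobodecki{\u\i} scale, and bootstrap with a fixed gain of regularity per step; you also correctly identify why $q=2$ is the non-degenerate case (the factor $\abs{\PP(\T\g-w\dg(u))}^{q-2}$ becomes $1$). The one place where the paper's route is simpler than what you sketch is the treatment of the principal part: rather than a variable-coefficient operator ``up to a positive bounded-below factor depending on $\g$'' that must be inverted by freezing coefficients or a continuity-method perturbation, the paper extracts the \emph{exactly constant-coefficient} bilinear form $\Q(f,g)=\Wint\sp{\T f-wf'(u),\T g-wg'(u)}\abs w^{-p}\,\d w\,\d u$ (all $\g$-dependence is pushed into the remainder), which diagonalizes in Fourier with symbol $\rho_k=c\abs k^{p-1}+o(\abs k^{p-1})$, so inversion is immediate and no perturbation argument is needed. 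What you call the ``technical heart'' is realized in the paper by a structural lemma showing every remainder term is a finite sum of integrals whose integrand carries a factor $\abs{\dg(u+s_1w)-\dg(u+s_2w)}^{2}\abs w^{-(p-2)}$ times bounded analytic expressions in $\dg$; this quadratic-difference structure is precisely what lets the product and chain rules convert $\g\in W^{(p-1)/2+\s,2}$ into $\Re(\g,\cdot)\in(W^{3/2-\s+\eps,2})^{*}$ and hence into a gain of $(p-4)/2-\eps$ derivatives per iteration. Your plan would work, but filling in that structural lemma is where all the labor lies, and your proposal leaves it at the level of an expectation rather than an argument.
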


Surprisingly, the proof of this theorem up to some new technical difficulties 
roughly follows the lines of the proof of the analogous result for 
O'Hara's energies $E^{\alpha,1}$ for $\alpha \in (2,3)$ ---
yet another indication that the two families of energies $\E$
and $E^{\alpha,p}$ are not too different from the perspective of an analyst.

In Proposition~\ref{prop:Q} we will see that, for $q=2$,
the highest term in the Euler-Lagrange equation is an elliptic operator
of order $p-1$.
We will show that the remainder 
consist of terms having a common form (Lemma~\ref{lem:struc-rem})
and is of lower order. This allows is to apply a bootstrapping argument
to show that critical points are smooth and thus prove Theorem~\ref{thm:smooth}.

Let us stress once more that we do \emph{not} expect the latter 
result to carry over to other parameters in~\eqref{eq:sub-critical-range}.
This is due to the fact that
the first variation should then be a degenerate elliptic operator.

\begin{remark}[The critical case $p=q+2$]\label{rem:critical}
 Although we generally restrict to~\eqref{eq:sub-critical-range},
 our results partially also apply to the \emph{critical case}
 $p=q+2$.

 This holds true for the characterization of energy spaces
 in Theorem~\ref{thm:energ-space} except for Estimate~\eqref{eq:energy-bound}
 and the derivation of the first variation in Theorem~\ref{thm:first-var}
 where we additionally have to claim $\g,h\in C^{1}$.

 However, the proofs of both Theorem~\ref{thm:existence} and
 Theorem~\ref{thm:smooth} fundamentally rely on $p>q+2$.
 In the light of corresponding results for the M\"obius energy
 $E^{2,1}$~\cite{fhw,brs} we expect these situation to be much more involved.
\end{remark}

To make the article as accessible as possible,
we present the two main tools used in the bootstrapping argument,
namely chain and product rules for fractional
Sobolev spaces, in Appendix~\ref{sect:fractional}.
Furthermore, a sketch on how to prove that finiteness
of the energy implies embeddedness (Theorem~\ref{thm:topo}) 
can be found in Appendix~\ref{sect:beta}.

%
% Some notation and reformulations
% 

Let us bring the energies into the form we will work with from now on.
Observing that
\begin{align*}
 &\dist\br{\ell(u),\gamma(u+w)} = \abs{\PP\br{\D\g}} \\
 &{= \sqrt{\abs{\D\g}^2-\abs{\sp{\D\g,\dg(u)}}^2}}
\end{align*}
and taking into account absolutely continuous curves (of arbitrary regular parametrization),
the functional~\eqref{eq:tp} may be rewritten as
\begin{equation}\label{eq:tp-reg2}
 \E(\g) = \int_{\R/\Z}\int_{-1/2}^{1/2} \frac{\abs{\PP\br{\D\g}}^q}{\abs{\D\g}^p} \abs{\dg(u+w)}\abs{\dg(u)}\d w\d u.
\end{equation}
It will be crucial for the estimates later on, that
\begin{equation}\label{eq:proj}
 \PP\br{\D\g} = \PP\br{\D\g-w\dg(u)},
\end{equation}
so, for $\g\in C^{1,1}$, the integrand in~\eqref{eq:tp-reg2} 
behaves like $\mathcal O\br{\abs w^{2q-p}}$ as $w\to0$.

We will use \emph{Sobolev-Slobodecki{\u\i} spaces} in the form they already 
appeared in~\cite{blatt:bre}.
For the readers' convenience we briefly recall their definition and
some basic properties.
Let $f\in L^2\rzd$. For $s\in(0,1)$ and $\rho\in[1,\infty)$ we define the seminorm
\begin{equation}\label{eq:Wsemi}
 \seminorm{f}_{\W[s,\rho]} := \br{\int_{\R/\Z}\int_{-1/2}^{1/2} \frac{\abs{f(u+w)-f(u)}^\rho}{\abs w^{1+\rho s}} \d w\d u}^{1/q}.
\end{equation}
Now let $W^{k,\rho}\rzd$, $k\in\N\cup\set0$, denote the usual Sobolev space
(recall $W^{0,\rho}:=L^\rho$) and
\[ W^{k+s,\rho}\rzd := \sett{f\in W^{k,\rho}\rzd}{\norm f_{\W[k+s,\rho]}<\infty} \]
be equipped with the norm
\[ \norm f_{\W[k+s,\rho]} := \norm f_{\W[k,\rho]} + \seminorm{f^{(k)}}_{\W[s,\rho]}. \]
Without further notice we will frequently use the embedding
\begin{equation}\label{eq:Wembed}
 \W[k+s,\rho]\rzd\hookrightarrow C^{k,s-1/\rho}\rzd, \qquad s\in(\rho^{-1},1).
\end{equation}
We will denote by $\Cia[]$ resp.\@ $\Wia[]$
\underline injective (embedded) curves parametrized by \underline arc-length
and by $\Wir[]$
\underline injective \underline regular curves.
As usual, a curve is said to be \emph{regular} if there is some $c>0$ such that $\abs\dg\ge c$ a.~e.
Constants may change from line to line.

\paragraph{Acknowledgements.}
The first author was supported by Swiss National Science Foundation Grant Nr.~200020\_125127 and the Leverhulm trust.
The second author was supported by DFG Transregional Collaborative Research Centre SFB~TR~71.
This project was initiated during the ESF Research Conference `Knots and Links: From Form to Function',
2 -- 8 July 2011, at the Mathematical Research Center `Ennio De Giorgi', Pisa, Italy.
\section{Energy space}\label{sect:es}

The main aim of this section is to characterize in some sense the 
domain of the energies $\E$ in the range
\begin{equation}\tag{\ref{eq:sub-critical-range}}
 p\in(q+2,2q+1), \qquad q>1
\end{equation}

and prove the existence of minimizers using this result.

We will see that these are the 
only parameters for which the energies are both self-repulsive
and well-defined
in the sense that there exist closed curves of finite energy,
but not scaling invariant.

\begin{remark}[Not a knot energy if $p<q+2$]\label{rem:self-repulsion}
 Let us give an example that shows that we do not get
 a bi-Lipschitz estimate for injective curves if $p<q+2$.
 Consider the curves $u\mapsto (u,0,0)$ and $u\mapsto (0,u,\delta)$ for $u\in[-1,1]$, $\delta>0$.
 The interaction of these strands leads to the $\E$-value
 \begin{equation}\label{eq:cross}
 \begin{split}
  2\iint_{[-1,1]^2} \frac{\br{v^{2}+\delta^{2}}^{q/2}}{\br{u^2+v^2+\delta^2}^{p/2}}\d u\d v
  &\le 2\int_0^{\sqrt 2}\int_0^{2\pi} \frac{\br{r^{2}\sin^{2}\varphi+\delta^{2}}^{q/2}}{\br{r^2+\delta^2}^{p/2}}r\d\varphi\d r \\
  &\le 4\pi \int_0^{\sqrt 2}\br{r^2+\delta^2}^{\tfrac{q-p}2}r\d r.
 \end{split}
 \end{equation}
 The integral on the right-hand side is bounded for $\delta\searrow0$ if $p<q+2$.
 Using Proposition~\ref{prop:energ-space} below
 and the monotonicity of $\E[\cdot,q]$ for fixed $q$, it is easy to join the endpoints of the two strands via suitable arcs producing a
 family of ``figure eight''-like embedded smooth curves that does not lead to an energy blow-up as $\delta\searrow0$.
 Clearly this does not meet the requirements for a knot energy as mentioned in the introduction.
\end{remark}

The biggest difference here to the approach taken in
\cite{SM7} is that we will only look at curves parametrized
by arc-length which are \emph{a priori} $C^1$ and injective. 
It is surprising, that we will still
be able to prove by rather simple means that the subset of
these curves of bounded length and energy is compact 
in $C^1$ up to translations. This will follow from
our classification of curves of finite energy and
a bi-Lipschitz estimate which we will again prove using this classification.

We will use this together with the lower semi-continuity 
of the energies $\E$ with respect to convergence in $C^1$,
to show that these are strong knot energies that can be minimized
within each knot class --- without using one of the basic tools
in \cite{SM7}, the decay of Jones' beta numbers. 
But as in \cite{SM7} scaling is what makes our arguments work. More precisely:
that things get punished more by the energy, if they happen on a small scale.

\begin{remark}[Problem with non-injective curves]
To see that considering just injective 
curves might be an idea, let us repeat an observation that was already made in \cite{SM7}
for the classical tangent point energies.
As the value $\E(\g)$ only depends on the image of $\g$ and multiplicities, it is easy to construct a non-injective curve 
parametized by arc-length of finite energy that is moreover not $C^1$:
Take e.~g.\@ an open $C^2$-curve defined on $[0,\tfrac12]$. By Proposition~\ref{prop:energ-space} it has finite energy.
Traversing it once, changing the direction at the end-point, and then traversing it in the opposite direction,
produces a non-injective continuous parametrization on $\R/\Z$ of a one-dimensional manifold with boundary whose energy amounts to four times the original energy.
By the same reasoning, passing a curve $k$-times results in an energy increase by the factor $k^2$.
\end{remark}

To give a sufficient condition for an injective curve
in $C^1$ parametrized by arc length ---  which will also turn out 
to be necessary ---  we will use the following easy result
from~\cite{blatt:bre}:

\begin{proposition}[Bi-Lipschitz continuity {\cite[Lem.~2.1]{blatt:bre}}]\label{prop:bilip}\hfill\\
 Let $\g\in\Wia(\R/\Z,\R^n)$, $p\ge q+2$.
 Then there is a constant $C_\g$
 such that
 \begin{equation}\label{eq:bilip}
  \frac{\abs w}{C_\g}\le\abs{\D\g}\le\abs w \qquad\text{for all } u\in\R/\Z,w\in[-\tfrac12,\tfrac12].
 \end{equation}
\end{proposition}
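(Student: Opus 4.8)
The plan is to get the two inequalities from quite different sources. The upper one, $\abs{\D\g}\le\abs w$, is immediate from the arc-length parametrization: since $\abs\dg=1$ a.e., $\abs{\D\g}=\abs{\int_u^{u+w}\dg}\le\abs w$ for all $u\in\R/\Z$ and $w\in[-\tfrac12,\tfrac12]$, and this uses neither injectivity nor any constraint on $p,q$. For the lower bound I would run a compactness argument, separating a neighbourhood of the diagonal $w=0$ (where the estimate follows from near-flatness of a unit-speed curve) from its complement (where it follows from injectivity and compactness).

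For the diagonal part, the key step is the elementary identity valid for any unit-speed curve,
\begin{align*}
 \abs{\D\g}^2
 &= \int_0^w\!\!\int_0^w\sp{\dg(u+t),\dg(u+\tau)}\d t\d\tau\\
 &= w^2-\tfrac12\int_0^w\!\!\int_0^w\abs{\dg(u+t)-\dg(u+\tau)}^2\d t\d\tau,
\end{align*}
which follows from $2\sp{a,b}=\abs a^2+\abs b^2-\abs{a-b}^2$ together with $\abs\dg=1$. Since moreover $\abs{\dg(x)-\dg(y)}\le2$, the last term is bounded by $\abs w^2$ times the mean oscillation of $\dg$ over the interval joining $u$ and $u+w$. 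This is the only place the hypothesis $p\ge q+2$ enters: it guarantees — by the embedding~\eqref{eq:Wembed} when $p>q+2$, and by the embedding of $W^{1/q,q}$ into $\mathrm{VMO}$ at the endpoint $p=q+2$ — that this mean oscillation is controlled by a modulus $\omega(\rho)\to0$ as $\rho\to0$, \emph{uniformly} in the position of the interval. Choosing $\delta\in(0,\tfrac12]$ with $\omega(\delta)$ sufficiently small then yields $\abs{\D\g}\ge\tfrac12\abs w$ for all $u\in\R/\Z$ and $\abs w\le\delta$.

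It remains to cover $\delta\le\abs w\le\tfrac12$. On the compact set $K:=\set{(u,w)\in\R/\Z\times[-\tfrac12,\tfrac12]:\abs w\ge\delta}$ the map $(u,w)\mapsto\abs{\D\g}$ is continuous — $\g\in C^0\rzd$ already holds here since $\W[(p-1)/q,q]\rzd\hookrightarrow C^0\rzd$ — and strictly positive, because $u+w\ne u$ in $\R/\Z$ whenever $w\in[-\tfrac12,\tfrac12]\setminus\set0$ while $\g$ is injective; so it attains a positive minimum $m$ on $K$, and then $\abs{\D\g}\ge m\ge2m\abs w$ there since $\abs w\le\tfrac12$. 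Combined with the previous step, this gives~\eqref{eq:bilip} with, say, $C_\g:=\max\set{2,(2m)^{-1}}$.

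I do not expect a genuine obstacle: morally the estimate is nothing but compactness plus the arc-length normalization. The one point deserving care is the \emph{uniformity in $u$} of the near-diagonal bound, which is precisely what a VMO (or Hölder) modulus of continuity supplies, together with the attendant observation that the borderline case $p=q+2$ is still covered because $W^{1/q,q}$ embeds into $\mathrm{VMO}$ even though it misses $C^0$.
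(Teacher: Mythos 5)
Your proof is correct, and it is essentially the standard argument: the paper itself does not prove this proposition but imports it verbatim from \cite[Lem.~2.1]{blatt:bre}, where the same strategy is used --- the upper bound from $\abs{\dg}\equiv1$, and the lower bound by splitting into a near-diagonal regime handled via the polarization identity $\abs{\D\g}^2=w^2-\tfrac12\iint_{[0,w]^2}\abs{\dg(u+t)-\dg(u+\tau)}^2\,\mathrm dt\,\mathrm d\tau$ and a far-from-diagonal regime handled by injectivity plus compactness. One small simplification: at the endpoint $p=q+2$ you need not invoke the $W^{1/q,q}\hookrightarrow\mathrm{VMO}$ embedding by name; since $\abs{\dg(x)-\dg(y)}\le\abs{x-y}^{(1+qs)/q}\cdot\abs{x-y}^{-(1+qs)/q}\abs{\dg(x)-\dg(y)}$ with $s=1/q$, H\"older's inequality gives
\begin{equation*}
 \frac1{w^2}\int_0^w\!\!\int_0^w\abs{\dg(u+t)-\dg(u+\tau)}\,\mathrm dt\,\mathrm d\tau
 \le\Bigl(\,\int_u^{u+w}\!\!\int_u^{u+w}\frac{\abs{\dg(t)-\dg(\tau)}^q}{\abs{t-\tau}^{2}}\,\mathrm dt\,\mathrm d\tau\Bigr)^{1/q},
\end{equation*}
and the right-hand side tends to $0$ as $\abs w\to0$, uniformly in $u$, simply by absolute continuity of the integral defining $\seminorm{\dg}_{\W[1/q,q]}^q$ over sets of measure $w^2\to0$. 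This covers both $p>q+2$ and $p=q+2$ in one stroke and avoids any density-in-$\mathrm{BMO}$ discussion.
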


In Proposition~\ref{prop:bi-Lipschitz} below we will provide a uniform
bi-Lipschitz estimate for curves of bounded $\E$-energy.

Now we are in the position to prove that curves in $\Wia(\R/\Z,\R^n)$
have finite energy:

\begin{proposition}[Sufficient regularity condition for $p\in[q+2,2q+1)$]\label{prop:energ-space}\hfill\\
 If $\g\in\Wia(\R/\Z,\R^n)$ is para\-metrized by arc-length, $q\geq 1$ and
 $p\in[q+2,2q+1)$ then $\E(\g)<\infty$.
\end{proposition}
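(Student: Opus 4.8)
The plan is to split the double integral in~\eqref{eq:tp-reg2} into a near-diagonal part $\abs w\le\delta$ and a far part $\abs w\ge\delta$, for a suitably small $\delta=\delta(\g)>0$, and to estimate each separately. Since $\g$ is parametrized by arc-length, $\abs{\dg}\equiv1$, so the arc-length factors in~\eqref{eq:tp-reg2} are harmless and we just have to bound $\iint \abs{\PP(\D\g)}^q\abs{\D\g}^{-p}\d w\d u$. The key structural input is~\eqref{eq:proj}, which lets us replace the numerator $\PP(\D\g)$ by $\PP(\D\g-w\dg(u))$; combined with the bi-Lipschitz estimate~\eqref{eq:bilip} from Proposition~\ref{prop:bilip}, which gives $\abs{\D\g}\ge\abs w/C_\g$ on all of $[-\tfrac12,\tfrac12]$, the integrand is pointwise bounded by $C_\g^p\,\abs{\D\g-w\dg(u)}^q\,\abs w^{-p}$.

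For the far part $\abs w\in[\delta,\tfrac12]$ there is nothing to do: the numerator is bounded by $\diam(\g)^q$ and $\abs w^{-p}\le\delta^{-p}$, so this piece contributes at most $C(p,q,\delta,\diam\g)<\infty$. The work is in the near-diagonal part. Here I would bound $\abs{\D\g-w\dg(u)}\le\int_0^{w}\abs{\dg(u+t)-\dg(u)}\,\d t$ (arc-length parametrization), and then use the fact that $\g\in\Wia[(p-1)/q,q]$ means $\dg\in\W[(p-1)/q-1,q]$; writing $s:=(p-1)/q-1$, note $s\in(1/q,1)$ is exactly the regularity for which~\eqref{eq:Wembed} gives $\dg\in C^{0,s-1/q}$, but that Hölder bound alone is not quite enough, so instead I would keep the Slobodeckij seminorm. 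Concretely, raising to the $q$-th power and using Jensen/Hölder on the inner integral, $\abs{\D\g-w\dg(u)}^q\le \abs w^{q-1}\int_0^{w}\abs{\dg(u+t)-\dg(u)}^q\,\d t$, so after dividing by $\abs w^p$ and integrating in $u$ over $\R/\Z$ and in $w$ over $[-\delta,\delta]$, a change of variables and the substitution $t\rightsquigarrow$ the Slobodeckij weight $\abs t^{-(1+sq)}=\abs t^{-(p-q)}$ reduces the bound to $\seminorm{\dg}_{\W[s,q]}^q$ times $\int_0^{\delta}\abs w^{q-1-p}\cdot\abs w^{\,\alpha}\,\d w$ for the appropriate power $\alpha$ coming from the weight; one checks the total exponent is $>-1$ precisely because $p<2q+1$, so this integral converges. (In the critical case $p=2q+1$ it would diverge logarithmically, which is consistent with the stated strict inequality.) This yields $\E(\g)\le C\bigl(1+\seminorm{\dg}_{\W[s,q]}^q\bigr)<\infty$.

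The main obstacle is purely bookkeeping: getting the exponents in the near-diagonal estimate to line up so that the remaining $w$-integral converges, i.e.\@ verifying that the Slobodeckij weight $\abs w^{1+sq}$ with $sq=p-1-q$ combines with the $\abs w^{q-1}$ from Jensen and the $\abs w^{-p}$ from the denominator to leave an integrable power of $\abs w$ near $0$, which is where the hypothesis $p<2q+1$ enters. A secondary technical point is that $\dg$ is only defined up to sign/orientation on $\R/\Z$ and the difference $\dg(u+t)-\dg(u)$ must be measured consistently with the periodic identification — this is routine since $\g\in C^1(\R/\Z)$. One also has to be slightly careful that the constant $C_\g$ from Proposition~\ref{prop:bilip} is finite for a fixed arc-length curve in $\Wia[(p-1)/q,q]$, which is exactly the content of that proposition, so nothing new is needed.
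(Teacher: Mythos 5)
Your overall strategy is the paper's: bound the denominator from below via Proposition~\ref{prop:bilip}, use $\PP\dg(u)=0$ (equivalently~\eqref{eq:proj}) to rewrite the numerator as $\abs w^q\abs{\int_0^1\PP\br{\dg(u+\th w)-\dg(u)}\d\th}^q$, apply Jensen, and reduce everything to the seminorm $\seminorm\dg_{\W[(p-1)/q-1,q]}$. (The near/far splitting is superfluous, since all of these bounds hold on the whole range $\abs w\le\tfrac12$, but it is harmless.) However, the final bookkeeping step --- which you yourself identify as the crux --- fails as you describe it. Extracting the Slobodecki{\u\i} weight from the inner integral gives at best
\begin{equation*}
 \int_0^{\abs w}\abs{\dg(u+t)-\dg(u)}^q\d t
 \le \abs w^{p-q}\int_0^{\abs w}\frac{\abs{\dg(u+t)-\dg(u)}^q}{t^{p-q}}\d t ,
\end{equation*}
so the ``appropriate power'' is $\alpha=1+sq=p-q$, and the leftover integral is $\int_0^{\delta}\abs w^{\,q-1-p+(p-q)}\d w=\int_0^{\delta}\abs w^{-1}\d w$: the total exponent is exactly $-1$, not $>-1$, for every $p,q$, and the integral diverges logarithmically. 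Moreover, the hypothesis $p<2q+1$ plays no role in any convergence issue inside this proof; it is needed only so that $s=(p-1)/q-1<1$, i.e.\@ so that the seminorm~\eqref{eq:Wsemi} is defined and the assumption $\g\in\Wia[(p-1)/q,q]$ is meaningful.

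The correct organization is not to factor but to integrate in $w$ first. By Fubini, for $0<t<w\le\tfrac12$,
\begin{align*}
 \int_0^{1/2}w^{q-1-p}\int_0^{w}\abs{\dg(u+t)-\dg(u)}^q\d t\d w
 &= \int_0^{1/2}\abs{\dg(u+t)-\dg(u)}^q\br{\int_t^{1/2}w^{q-1-p}\d w}\d t \\
 &\le \frac1{p-q}\int_0^{1/2}\frac{\abs{\dg(u+t)-\dg(u)}^q}{t^{p-q}}\d t ,
\end{align*}
where the inner $w$-integral converges because $p>q$ (guaranteed by $p\ge q+2$); integrating over $u\in\R/\Z$ then yields $\tfrac1{p-q}\seminorm\dg_{\W[(p-1)/q-1,q]}^q<\infty$. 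The paper achieves the same effect by the rescaling $\tilde w=\th w$, which produces a factor $\th^{p-q-1}$ that is integrable on $[0,1]$ since $p\ge q+2$. With this repair your argument coincides with the paper's proof.
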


\begin{proof}
 By~\eqref{eq:bilip} we derive as in~\cite{blatt:estp}
 \begin{align*}
  \E(\g) &= \int_{\R/\Z}\int_{-1/2}^{1/2} \frac{\abs{\PP\br{\D\g}}^q}{\abs{\D\g}^p}\d w\d u \\
  &\le {C_\gamma^p}\int_{\R/\Z}\int_{-1/2}^{1/2} \frac{\abs{\PP\int_0^1\dg(u+\th w)\d \th}^q}{\abs{w}^{p-q}}\d w\d u \\
  &= {C_\gamma^p}\int_{\R/\Z}\int_{-1/2}^{1/2} \frac{\abs{\int_0^1\PP\br{\dg(u+\th w)-\dg(u)}\d \th}^q}{\abs{w}^{p-q}}\d w\d u \\
  &\le {C_\gamma^p}\int_0^1\int_{\R/\Z}\int_{-1/2}^{1/2} \frac{\abs{{\dg(u+\th w)-\dg(u)}}^q}{\abs{w}^{p-q}}\d w\d u\d\th \\
  &\le {C_\gamma^p}\int_0^1\int_{\R/\Z}\int_{-\th/2}^{\th/2} \th^{p-q-1}\frac{\abs{{\dg(u+\tilde w)-\dg(u)}}^q}{\abs{\tilde w}^{p-q}}\d\tilde w\d u\d\th \\
  &\le {C_\gamma^p}\int_{\R/\Z}\int_{-1/2}^{1/2} \frac{\abs{{\dg(u+\tilde w)-\dg(u)}}^q}{\abs{\tilde w}^{p-q}}\d\tilde w\d u \\
  &= {C_\gamma^p}\seminorm\dg_{\W[(p-1)/q-1,q]}^q.
 \end{align*}
\end{proof}

To get a classification of all finite-energy curves in $\Cia$
we need to show that the inverse implication is true as well:

\begin{proposition}[Necessary regularity for finite energy]\label{prop:conttang}
 Let $\g\in C^{1}(\R/\Z,\R^n)$ be injective and 
 parametrized by arc-length with $\E(\g)<\infty$
 for~\eqref{eq:sub-critical-range}.
 Then $\g\in\W[(p-1)/q,q]$ and
 \begin{equation}\label{eq:energ-bound}
  \seminorm\dg_{\W[(p-1)/q-1,q]}^q 
  \le C\br{\E(\g)+ \E(\g)^{\beta}}
 \end{equation}
 where $C$ and $\beta>0$ depend only on $p$, $q$.
 Moreover,
 \begin{equation}\label{eq:hoelder-bound}
  \norm\dg_{C^{(p-2)/q-1}}^q 
  \le C\br{\E(\g)+ \E(\g)^{\beta}}.
 \end{equation}
\end{proposition}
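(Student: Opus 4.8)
The plan is to read a weighted $L^q$-bound for the \emph{averaged} normal difference quotients of $\dg$ directly off the energy and then promote it, scale by scale, to the full Sobolev seminorm. Since $\g$ is parametrized by arc-length, $\abs{\T\g}=\bigl|\int_u^{u+w}\dg\bigr|\le\abs w$ for $\abs w\le\tfrac12$, so \eqref{eq:tp-reg2} together with \eqref{eq:proj} gives $\E(\g)\ge\Wint\abs{\PP\br{\T\g}}^q\abs w^{-p}\,\d w\,\d u=\Wint\abs{\Phi(u,w)}^q\abs w^{q-p}\,\d w\,\d u$, where $\Phi(u,w):=\int_0^1\PP\br{\dg(u+\th w)-\dg(u)}\,\d\th=\tfrac1w\PP\br{\T\g-w\dg(u)}$. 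Recalling $1+q\br{\tfrac{p-1}q-1}=p-q$, the object to be bounded is $\seminorm\dg_{\W[(p-1)/q-1,q]}^q=\Wint\abs{\T\dg}^q\abs w^{-(p-q)}\,\d w\,\d u$, and once \eqref{eq:energ-bound} is established, \eqref{eq:hoelder-bound} follows at once from (the homogeneous form of) the embedding \eqref{eq:Wembed}, because $s:=\tfrac{p-1}q-1\in(\tfrac1q,1)$ by \eqref{eq:sub-critical-range} and $s-\tfrac1q=\tfrac{p-2}q-1$.

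The first step, and the main obstacle, is a quantitative small-scale rigidity estimate: there are $c_0,c_1>0$ depending only on $p,q$, and $\kappa:=\tfrac1{p-q-2}>0$, such that $\abs{\dg(u+w)-\dg(u)}\le\tfrac14$ for all $u$ whenever $\abs w\le\ell_0:=\min\set{c_0\br{1+\E(\g)}^{-\kappa},\,c_1}$. The idea: if the unit tangent turned by an order-one amount over an interval of length $\ell$, then comparing with the circular arc of radius $\sim\ell$ would force $\E(\g)\gtrsim\ell^{q-p+2}=\ell^{-(p-q-2)}$, which is impossible for small $\ell$ since $p>q+2$; the upper subcriticality bound $p<2q+1$ enters exactly to keep the exponent $2q-p$ of $\abs{b-a}$ integrable in this comparison. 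Making this rigorous for a general $C^1$ arc-length curve — whose tangent need not turn monotonically — requires localizing to a sub-interval on which the tangent sweeps a definite angle essentially in one fixed plane and bounding $\abs{\PP[\dg(a)]\br{\g(b)-\g(a)}}$ from below there. This parametrization-free turning estimate is the technically hardest part; the rest is comparatively soft. (Alternatively one could invoke the $C^{1,\alpha}$-regularity of Theorem~\ref{thm:topo}, but then the argument would cease to be self-contained and the explicit dependence on $\E(\g)$ would be lost.)

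On the scales $\abs w\le\ell_0$ we have $\abs{\T\dg}\le\tfrac14$, and there $\abs\dg\equiv1$ yields the elementary identity $\abs{\PP\br{\T\dg}}^2=1-c^2=\tfrac{1+c}2\,\abs{\T\dg}^2$ with $c:=\sp{\dg(u+w),\dg(u)}$, so the normal component of the difference quotient is comparable to the full one with a constant that tends to $1$ as $\ell_0\to0$ (shrink $c_1$ if needed). Splitting $\PP\br{\T\dg}(w)=\Phi(u,w)+\bigl[\PP\br{\T\dg}(w)-\Phi(u,w)\bigr]$ and writing the bracket as $\tfrac1w\int_0^w\PP\bigl[\dg(u+w)-\dg(u+\tau)\bigr]\,\d\tau$, the triangle inequality in $L^q\bigl(\abs w^{q-p}\,\d w\,\d u;\ \abs w\le\ell_0\bigr)$ peels off the $\Phi$-term — which is controlled by $\E(\g)$ by the computation above — and Minkowski's integral inequality reduces the bracket to the classical weighted Hardy inequality on $(0,\ell_0)$, whose constant $\br{\tfrac q{p-1}}^q$ is strictly less than $1$ precisely because $p>q+1$. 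Hence the bracket can be absorbed and $\int_{\R/\Z}\int_{\abs w\le\ell_0}\abs{\T\dg}^q\abs w^{-(p-q)}\,\d w\,\d u\le C\,\E(\g)$.

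For the remaining scales $\abs w\in[\ell_0,\tfrac12]$ one simply uses $\abs{\T\dg}\le2$ to get $\int_{\R/\Z}\int_{\ell_0\le\abs w\le1/2}\abs{\T\dg}^q\abs w^{-(p-q)}\,\d w\,\d u\le 2^{q+1}\int_{\ell_0}^{1/2}w^{q-p}\,\d w\le\tfrac{2^{q+1}}{p-q-1}\,\ell_0^{-(p-q-1)}$, which by the lower bound on $\ell_0$ and $p-q-1>1$ is at most $C\br{1+\E(\g)}^{\beta}$ with $\beta:=\tfrac{p-q-1}{p-q-2}>1$. Adding the two ranges gives $\seminorm\dg_{\W[(p-1)/q-1,q]}^q\le C\br{\E(\g)+\E(\g)^\beta}$ — the additive constant absorbed into the $\E(\g)^\beta$-term since a closed arc-length curve has $\E(\g)$ bounded below by a universal positive constant — which is \eqref{eq:energ-bound}. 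Finally $\g\in C^1$ gives $\dg\in L^q$, so this shows $\dg\in\W[(p-1)/q-1,q]$, i.e.\@ $\g\in\W[(p-1)/q,q]$, and \eqref{eq:hoelder-bound} follows from the embedding \eqref{eq:Wembed} as noted.
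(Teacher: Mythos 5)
Your overall architecture --- reading a bound on the averaged normal difference quotient $\Phi(u,w)=\tfrac1w\PP\br{\T\g-w\dg(u)}$ directly off the energy, upgrading it to the full seminorm on small scales, and treating the large scales trivially --- is sound, and your Hardy-inequality absorption step is a workable and genuinely different substitute for the paper's device at that point (the paper instead symmetrizes in $u\leftrightarrow u+w$ and uses the identity for $\abs{\PP[\dg(u+w)]\br{\T\g}-\PP\br{\T\g}}^2$ to get the pointwise lower bound $\tfrac34\abs{w}\abs{\T\dg}$ once the tangent oscillation is below $\tfrac12\sqrt2$). The problem is your first step. You assert a quantitative small-scale rigidity estimate --- $\abs{\T\dg}\le\tfrac14$ for all $\abs w\le\ell_0$ with $\ell_0\gtrsim(1+\E(\g))^{-1/(p-q-2)}$ --- you correctly identify it as the technically hardest part, and you do not prove it: the circular-arc comparison is only a heuristic, and for a general $C^1$ curve whose tangent oscillates non-monotonically it is not clear how an order-one turning of $\dg$ over an interval of length $\ell$ forces the lower bound on $\dist(\ell(u),\g(u+w))$ that would make $\E(\g)\gtrsim\ell^{-(p-q-2)}$. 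As written this is a genuine gap, and it sits exactly where the substance of the proposition lies.

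The paper's proof shows how to avoid proving any a priori rigidity at a quantified scale, and your argument can be closed by adopting the same reorganization. The $C^1$ hypothesis is used only qualitatively: by continuity there is some unquantified $\delta>0$ with $\abs{\T\dg}\le\tfrac12\sqrt2$ for $\abs w\le\delta$, chosen maximal so that equality holds at some $(u_0,w_0)$. The seminorm bound restricted to scales $\abs w\le\delta$ then follows (in the paper by the symmetrization identity; your Hardy absorption would serve equally well there). Only afterwards is $\delta$ bounded from below: the partial seminorm bound feeds a Campanato iteration yielding $\abs{\dg(u+w)-\dg(u)}\le C\E(\g)^{1/q}\abs w^{\alpha}$ with $\alpha=(p-2)/q-1$ on scales below $\delta$, and evaluating at $(u_0,w_0)$ gives $\delta\ge c\,\E(\g)^{-1/(p-q-2)}$ --- precisely your $\ell_0$, and it reproduces your exponent $\beta=\tfrac{p-q-1}{p-q-2}$ for the tail $\int_\delta^{1/2}w^{q-p}\d w$. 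If you restructure your proof in this order (qualitative $\delta$ first, Hardy absorption on $\abs w\le\delta$, then Campanato to bound $\delta$ from below), the gap disappears; as proposed, with the rigidity estimate taken as the opening step, it does not.
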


\begin{proof}
 The estimate \eqref{eq:hoelder-bound} immediately follows from
 \eqref{eq:energ-bound} and Morrey's embedding theorem for
 fractional Sobolev spaces.
 
 The proof of \eqref{eq:energ-bound} uses the techniques from~\cite{blatt:estp}. By continuity
 we may choose some $\delta>0$ such that
 \begin{equation}\label{eq:sqrt2}
  \abs{\D{\dg}}\le\tfrac12\sqrt2 \quad\text{for all } u\in\R/\Z,w\in[-\delta,\delta].
 \end{equation}
 In fact we choose the biggest such constant, i.~e.\@ we assume that
 there are $u_{0}\in\R/\Z,w_{0}\in[-\delta,\delta]$ such that
 \begin{equation} \label{eq:sqrt22}
  \abs{\dg(u_{0}+w_{0})-\dg(u_{0})}=\tfrac12\sqrt2.
 \end{equation}

 This leads to
 \begin{align*}
  & \abs{\PP[\dg(u+w)]\br{\D\g}-\PP\br{\D\g}}^2 \\
  & = \abs{\sp{\D\g,\dg(u+w)}\dg(u+w) - \sp{\D\g,\dg(u)}\dg(u)}^{2} \\
  & = \abs{\sp{\D\g,\dg(u+w)}}^2 + \abs{\sp{\D\g,\dg(u)}}^2 \\
  &   \quad{} - 2\sp{\D\g,\dg(u)}\sp{\D\g,\dg(u+w)}\sp{\dg(u),\dg(u+w)} \\
  & = \abs{\sp{\D\g,\dg(u+w)}-\sp{\D\g,\dg(u)}}^2 \\
  &   \quad{} + \sp{\D\g,\dg(u)}\sp{\D\g,\dg(u+w)}\abs{\dg(u)-\dg(u+w)}^2 \\
  & \ge \abs{\dg(u)-\dg(u+w)}^2 \, w^2\int_0^1
         \underbrace{\sp{\dg(u+\th_1w),\dg(u)}}_{=1-\frac12\abs{\dg(u+\th_1w)-\dg(u)}^2\ge\frac34}\d\th_1
        \int_0^1\underbrace{\sp{\dg(u+\th_2w),\dg(u+w)}}_{\ge\frac34}\d\th_2 \\
  & \ge \tfrac9{16}w^2\abs{\D\dg}^2.
 \end{align*}
 This allows to estimate using $\abs{\D\g}\le\abs w$
 \begin{align*}
  \E(\g) &= \int_{\R/\Z}\int_{-1/2}^{1/2} \frac{\abs{\PP\br{\D\g}}^q}{\abs{\D\g}^p} \d w\d u\\
  &= \tfrac12\int_{\R/\Z}\int_{-1/2}^{1/2} \frac{\abs{\PP[\dg(u+w)]\br{\D\g}}^q + \abs{\PP\br{\D\g}}^q}{\abs{\D\g}^p} \d w\d u\\
  &\ge c_{p,q} \int_{\R/\Z}\int_{-\delta}^{\delta} \frac{\abs w^q\abs{\D\dg}^q}{\abs{\D\g}^{p}} \d w\d u\\
  &\ge c_{p,q} \int_{\R/\Z}\int_{-\delta}^{\delta} \frac{\abs{\D\dg}^q}{\abs w^{p-q}}\br{\frac{\abs w}{\abs{\D\g}}}^{p} \d w\d u\\
  &\ge c_{p,q} \int_{\R/\Z}\int_{-\delta}^{\delta} \frac{\abs{\D\dg}^q}{\abs{w}^{p-q}}\d w\d u
 \end{align*}
 which gives
 \begin{align*}
  \seminorm\dg_{\W[(p-1)/q-1,q]}^q
  &\le C_{p,q}\br{\E(\g)+\norm\dg_{L^\infty}\int_\delta^{1/2}\frac{\d w}{w^{p-q}}} \\
  &\le C_{p,q}\br{\E(\g)+\delta^{1+q-p}
  }.
 \end{align*}
 Unfortunately, this last estimate gets worse as $\delta$ gets small.
 We will derive a Morrey estimate for fractional 
 Sobolev space to estimate $\delta$ from below. More precisely, we will show  \begin{equation}\label{eq:Campanato}
  \lnorm[\infty]{\gamma'(\cdot+w) - \gamma'(\cdot)} \leq C \E(\g)^{1/q} |w|^\alpha
  \qquad\text{for all }w\in[-\tfrac12,\tfrac12]
 \end{equation}
 where $\alpha = (p-2)/q-1>0$.
 From~\eqref{eq:Campanato} we infer
 \begin{equation*}
  \tfrac12\sqrt2 \le C \E(\g)^{1/q} \delta^\alpha
 \end{equation*}
 which concludes the proof.

 To complete the argument, we sketch the proof of the Morrey estimate stated
 above. Let $\g'_{B_r (x)}$ denote the integral mean 
 of $\g'$ over $B_r(x)$. We calculate for $x \in \mathbb R / \mathbb Z$
 and $r \in (0,\delta)$ 
 \begin{align*}
  \frac 1 {2r}\int_{B_r(x)} |\gamma'(v)- \g'_{B_r(x)}| \d v
  &\leq \frac 1 {4r^2} \int_{B_r(x)} \int_{B_r(x)}| \gamma'(v) - \gamma'(u)| \d u \d v
  \\
  &\leq  \left(\frac 1 {4r^2} \int_{B_r(x)} \int_{B_r(x)}| \gamma'(v) - \gamma'(u)|^q \d u \d v\right)^{1/q}
  \\
  &\leq C r^{\alpha} \left(\int_{B_r(x)} \int_{B_r(x)} \frac{| \gamma'(v) - \gamma'(u)|^q} {|u-v|^{p-q}}\d u \d v\right)^{1/q} 
  \\
  &\leq C r^{\alpha} \E(\g)^{1/q}.
 \end{align*}
 The estimate~\eqref{eq:Campanato} now follows from this
 by standard arguments due to Campanato~\cite{campanato}.
 We choose two Lebesgue points $u,v \in \mathbb R / \Z$ of $\gamma'$
 with $r:=|u-v| \in (0,\tfrac\delta2)$. Then
 \begin{equation*}
  |\dg(u) - \dg(v)| 
  \leq \sum_{k=0}^\infty \left| \dg_{B_{r2^{1-k}}(u)} - \dg_{B_{r2^{-k}}(u)} \right|
  + \left| \dg_{B_{2r}(u)} - \dg_{B_{2r}(v)} \right|
  + \sum_{k=0}^\infty \left| \dg_{B_{r2^{1-k}}(v)} - \dg_{B_{r2^{-k}}(v)} \right|.
  \end{equation*}
  
  Since
  \begin{align*}
   \left| \dg_{B_{2r}(u)} - \dg_{B_{2r}(v)} \right| 
   &\leq \frac {\int_{B_{2r}(u)} |\dg(x) - \dg_{B_{2r}(u)}| \d x +
    \int_{B_{2r}(v)} |\dg(x) - \dg_{B_{2r}(v)}| \d x} {|B_{2r} (u) \cap B_{2r}(v)|}
    \\
    &\leq C |u-v|^\alpha \E(\g)^{1/q} 
  \end{align*}
  as $r=|u-v|$ and for all $y \in \mathbb R / \mathbb Z$, $R\in(0,\tfrac\delta2)$
  \begin{align*}
   \left| \dg_{B_{2R}(y)} - \dg_{B_{R}(y)} \right| 
   &\leq \frac {\int_{B_{R}(y)} |\dg(x) - \dg_{B_{2R}(y)}| \d x +
    \int_{B_{R}(y)} |\dg(x) - \dg_{B_{R}(y)}| \d x} {R}
    \\
    &\leq C R^\alpha \E(\g)^{1/q},
  \end{align*}
  we deduce that
  \begin{align*}
   |\dg(u) - \dg(v)| 
   \leq C \left( \sum_{0} ^\infty 2^{-k\alpha} + 1 +  \sum_{0} ^\infty 2^{-k\alpha}   \right)
   |u-v|^\alpha \E(\g)^{1/q}.
  \end{align*}
  Thus
  \begin{equation*}
   |\dg(u) - \dg(v)| \leq C |u-v|^\alpha \E(\g)^{1/q}
  \end{equation*}
 for all Lebesgue points of $\gamma'$ with $|u-v| \leq \frac\delta2$.

 Since Lebesgue points are dense and using the triangle inequality this 
 proves \eqref{eq:Campanato}.
\end{proof}

We assume $p<2q+1$ in the last proposition mainly
because~\eqref{eq:Wsemi} is not defined for $s\ge1$.
For general $p\ge 2q+1$ we nevertheless still have
\begin{equation}\label{eq:energ-bound*}\tag{\ref{eq:energ-bound}*}
 \int_{\R/\Z}\int_{-1/2}^{1/2} \frac{\abs{\D\dg}^q}{\abs{w}^{p-q}}\d w\d u
 \le C\br{\E(\g)+\E(\g)^{\beta}}.
\end{equation}
This enables us to derive the following result on what we 
want to call the \emph{singular range}: For these parameters the integrand
is so singular if it does not vanish completely, that the integral is
either equal to $0$ or infinite:

\begin{proposition}[Singular range $p\ge2q+1$]\label{prop:singular}
 For $p\ge 2q+1$, $q>1$, and an absolutely continuous $\g:\R/\Z\to\R$ we have
 $\E(\g)<\infty$ if and only if
 the image of $\g$ lies on a straight line.
\end{proposition}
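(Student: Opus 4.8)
The plan is to prove both implications. The easy direction is that if the image of $\g$ lies on a straight line, then $\dist(\ell(u),\g(u+w))=0$ for a.e.\ $u,w$, because the tangent line $\ell(u)$ (wherever $\dg$ exists) coincides with that straight line and hence passes through every point of the image. Consequently $\abs{\PP(\D\g)}=0$ a.e., so the integrand in \eqref{eq:tp-reg2} vanishes identically and $\E(\g)=0<\infty$. (One should remark that if $\dg=0$ on a set of positive measure this still causes no problem, as the integrand is then literally zero there as well, and where $\dg\neq 0$ the tangent line is the line.) This takes only a few lines.

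For the converse, assume $\E(\g)<\infty$ and $p\ge 2q+1$, $q>1$. First reduce to the regular arc-length case: by Theorem~\ref{thm:topo} (respectively the reasoning behind Remark~\ref{rem:C1}) a finite-energy absolutely continuous curve has image an embedded $C^{1,\alpha}$-manifold, so after reparametrization we may assume $\g\in\Cia[]$. Alternatively, and more in the spirit of keeping things self-contained, one can argue directly on arc-length pieces where $\dg$ exists. The key analytic input is estimate~\eqref{eq:energ-bound*}, which holds for all $p\ge 2q+1$:
\begin{equation*}
 \int_{\R/\Z}\int_{-1/2}^{1/2} \frac{\abs{\D\dg}^q}{\abs{w}^{p-q}}\d w\d u
 \le C\br{\E(\g)+\E(\g)^{\beta}}<\infty.
\end{equation*}
Since $p\ge 2q+1$ gives $p-q\ge q+1$, the weight $\abs w^{-(p-q)}$ is so strong that finiteness of this integral forces $\abs{\D\dg}=0$ for a.e.\ $u$ and a.e.\ $w$: indeed, for every fixed small $w$ one has $\int_{\R/\Z}\abs{\D\dg}^q\d u\le C\abs w^{p-q}\cdot(\text{finite})$ after integrating the bound over $w$ in a dyadic annulus, and letting $w\to 0$ along a sequence one concludes $\dg(u+w)=\dg(u)$ for a.e.\ $u$, for arbitrarily small $w$, hence $\dg$ is a.e.\ constant. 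Therefore $\g$ is a.e.\ an affine map $u\mapsto a+ub$, i.e.\ its image lies on a straight line.

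The main obstacle is making the ``weight kills the seminorm'' step rigorous, since \eqref{eq:Wsemi} is only defined for the Slobodecki\u\i\ exponent $s\in(0,1)$ and here the effective smoothness exponent $(p-1)/q-1$ is $\ge 1$; this is exactly why the proposition is phrased via \eqref{eq:energ-bound*} rather than a Sobolev norm. The clean way is a dyadic decomposition: write $\int_{-1/2}^{1/2}\abs{\D\dg}^q\abs w^{-(p-q)}\d w \ge \sum_{j\ge j_0} 2^{j(p-q)}\int_{2^{-j-1}\le\abs w\le 2^{-j}}\abs{\D\dg}^q\d w$, so each term must be finite and, since the sum converges, $2^{j(p-q)}\int\!\!\int_{\abs w\sim 2^{-j}}\abs{\D\dg}^q\to 0$; combined with $p-q>0$ this yields that the averaged modulus of continuity of $\dg$ at scale $2^{-j}$ tends to zero faster than any power, which for an $L^q$ function forces $\dg$ to be a.e.\ constant (one may also phrase this as: $\dg\in W^{s,q}$ for every $s<(p-1)/q-1$, in particular $\dg\in W^{1,q}$ with zero derivative). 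Once $\dg\equiv b$ a.e., integrating gives $\g(u)=\g(0)+ub$ on $[0,1)$, and closedness forces nothing extra since we only claim the image lies on a line. A small additional point to address: if $\g$ is merely absolutely continuous and not assumed regular, the set where $\dg=0$ is handled by the observation above that the integrand vanishes there, so the estimate still applies on the complementary set and the conclusion $\dg\in\{0,b\}$ a.e.\ again puts the image on a line.
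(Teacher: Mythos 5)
Your proposal is correct and its backbone is the same as the paper's: everything rests on the estimate \eqref{eq:energ-bound*} together with the fact that finiteness of $\iint\abs{\D\dg}^q\abs{w}^{-(p-q)}\d w\d u$ with $p-q\ge q+1$ forces $\dg$ to be constant. The one genuine difference is how that last step is handled. The paper simply cites Brezis' ``how to recognize constant functions'' lemma, whereas you prove it by hand: your dyadic decomposition, the observation that the terms of the convergent series tend to zero, and the resulting little-$o$ bound $\norm{\dg(\cdot+w_j)-\dg}_{L^q}=o(\abs{w_j})$ along a sequence $w_j\to0$ is precisely a self-contained one-dimensional proof of that lemma (the difference quotients then vanish in $L^q$, hence distributionally, hence $\dg$ is a.e.\ constant). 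This is a perfectly valid, more elementary route. Two caveats. First, your parenthetical reformulation ``$\dg\in W^{s,q}$ for every $s<(p-1)/q-1$, in particular $\dg\in W^{1,q}$ with zero derivative'' breaks down at the endpoint $p=2q+1$, where $(p-1)/q-1=1$: membership in $W^{s,q}$ for all $s<1$ is enjoyed by plenty of non-constant functions, so you must rely on the little-$o$ modulus-of-continuity argument (which does cover the endpoint, since the crucial $o(1)$ factor comes from the convergence of the series, not from a surplus in the exponent); likewise ``tends to zero faster than any power'' overstates what the dyadic argument gives. Second, your explicit reduction of the general absolutely continuous case to arc-length parametrized $C^1$ curves via Theorem~\ref{thm:topo}, and your treatment of the trivial direction, address points the paper's one-line proof glosses over, since \eqref{eq:energ-bound*} is derived under the hypotheses of Proposition~\ref{prop:conttang}; that extra care is welcome.
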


\begin{proof}
 Applying~\eqref{eq:energ-bound*} to Brezis~\cite[Prop.~2]{brezis}
 reveals that $\dg$ is constant.
 Hence, $\g$ lies on a straight line.
\end{proof}

Proposition~\ref{prop:conttang} and Proposition~\ref{prop:energ-space}
prove Theorem~\ref{thm:energ-space}.

Using Proposition~\ref{prop:conttang} together with
the Arzel\`a-Ascoli theorem, we see that sets of 
curves in $\Cia(\mathbb R / \mathbb Z, \mathbb R^n)$
with a uniform bound on the energy are sequentially compact in $C^1$.
The next proposition will help us to show that the limits we get
are injective curves:

\begin{proposition}[Uniform bi-Lipschitz estimate]\label{prop:bi-Lipschitz}
 For every $M < \infty$ and~\eqref{eq:sub-critical-range} there is a constant $C(M,p,q)>0$ such 
 that the following is true: Every curve $\gamma\in \Cia(\mathbb R / \mathbb Z, \mathbb R^n)$
 parametrized by arc-length with
 \begin{equation}\label{eq:TP<M}
  \E(\gamma) \leq M
 \end{equation}
 satisfies the bi-Lipschitz estimate
 \begin{equation*}
  \abs{u-v} \leq C(M,p,q) \abs{\gamma(u) - \gamma(v)}
  \qquad \text{for all } u,v \in \mathbb R / \mathbb Z.
 \end{equation*}
\end{proposition}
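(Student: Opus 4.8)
The plan is to prove the estimate separately for pairs $u,v$ close in parameter and for pairs far apart in parameter; the first regime is immediate from the Hölder bound already at our disposal, while the second is where the real work lies.

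For $u,v\in\R/\Z$ with $t:=\abs{u-v}$ small, write $\g(u)-\g(v)=\int_{0}^{t}\dg(v+\s)\,\d\s$ (along the shorter arc) and use that Proposition~\ref{prop:conttang}, estimate \eqref{eq:hoelder-bound}, gives $\seminorm{\dg}_{C^{0,\alpha}}\le C_1(M)$ with $\alpha:=(p-2)/q-1>0$ (this is the first place $p>q+2$ is used): then
\[
 \abs{\g(u)-\g(v)}\ \ge\ t-\int_0^t\abs{\dg(v+\s)-\dg(v)}\d\s\ \ge\ t-\tfrac{C_1(M)}{1+\alpha}\,t^{1+\alpha}\ \ge\ \tfrac t2
\]
whenever $t\le t_0(M):=\min\bigl\{\tfrac12,\bigl(\tfrac{1+\alpha}{2C_1(M)}\bigr)^{1/\alpha}\bigr\}$, so $\abs{u-v}\le2\abs{\g(u)-\g(v)}$ on that range. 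It then suffices to produce $\eps_0(M)>0$ with $\abs{\g(u)-\g(v)}\ge\eps_0(M)$ whenever $\dist_{\R/\Z}(u,v)\ge t_0(M)$, since combining the two regimes gives the assertion with $C(M,p,q)=\max\{2,1/(2\eps_0(M))\}$.

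I would obtain this lower bound by contradiction and a blow-up that exploits the scaling of $\E$. Assume there are arc-length parametrized $\g_k\in\Cia$ with $\E(\g_k)\le M$ and points $u_k,v_k$ with $\dist_{\R/\Z}(u_k,v_k)\ge t_0(M)$ but $d_k:=\abs{\g_k(u_k)-\g_k(v_k)}\to0$. Rescale at scale $\lambda_k:=d_k^{1/(1+\alpha)}\to0$, setting $\eta_k(s):=\lambda_k^{-1}\bigl(\g_k(u_k+\lambda_k s)-\g_k(u_k)\bigr)$, an arc-length curve on $\R/(\lambda_k^{-1}\Z)$. A change of variables in \eqref{eq:tp-reg2} gives $\E(\eta_k)=\lambda_k^{\,p-q-2}\E(\g_k)\le\lambda_k^{\,p-q-2}M\to0$ (using $p>q+2$), while $\seminorm{\eta_k'}_{C^{0,\alpha}}=\lambda_k^{\alpha}\seminorm{\g_k'}_{C^{0,\alpha}}\to0$. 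Hence, by Arzelà--Ascoli on compact intervals and a diagonal argument, a subsequence satisfies $\eta_k\to\eta_\infty$ in $C^1_{\mathrm{loc}}(\R)$ with $\eta_\infty'$ constant, i.e.\@ $\eta_\infty(s)=sT_1$ for a unit vector $T_1$; likewise, with $\sigma_k:=(v_k-u_k)/\lambda_k$ (so $\abs{\sigma_k}\ge t_0/\lambda_k\to\infty$ and $\abs{\eta_k(\sigma_k)}=d_k/\lambda_k\to0$) the translated curves $\eta_k(\sigma_k+\,\cdot\,)$ converge to $sT_2$ with $\abs{T_2}=1$, and both limit lines pass through the origin.

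If $T_1\neq\pm T_2$ the blow-up is transversal and we are done: restricting the energy \eqref{eq:tp-reg2} of $\eta_k$ to $\{u\in[-R,R],\ (u+w)-\sigma_k\in[-R,R]\}$ and passing to the coordinates $(t,s)=(u,\,(u+w)-\sigma_k)$, the integrand converges pointwise a.e.\@ to $\abs{\PP[T_1](sT_2-tT_1)}^q\,\abs{sT_2-tT_1}^{-p}$, whence by Fatou
\[
 \liminf_{k\to\infty}\E(\eta_k)\ \ge\ \int_{[-R,R]^2}\frac{\abs{\PP[T_1](sT_2-tT_1)}^q}{\abs{sT_2-tT_1}^p}\d t\d s\ \ge\ c(T_1,T_2)\int_0^{R}r^{\,q+1-p}\d r\ =\ +\infty
\]
(with $c(T_1,T_2)=\mathrm{const}\cdot(1-\sp{T_1,T_2}^2)^{q/2}>0$, the last integral diverging because $q+1-p<-1$, again by $p>q+2$), contradicting $\E(\eta_k)\to0$. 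The remaining case $T_1=\pm T_2$, where the two blow-up lines coincide and the displayed estimate is vacuous, is the main obstacle: it forces the degenerate ``doubly tangential'' geometry and must be excluded using the injectivity of the $\g_k$ in an essential way --- for instance by a coarser second rescaling, or by a direct estimate showing that two disjoint, nearly tangent and nearly collinear sub-arcs lying at mutual distance comparable to $\delta$ along a window of length comparable to $\delta^{1/(1+\alpha)}$ already contribute at least a constant times $\delta^{-(p-q-2)}$ to $\E(\g_k)$. Throughout, the strict inequality $p>q+2$ --- equivalently $\alpha>0$, equivalently negativity of the scaling exponent $q+2-p$ --- is exactly what makes each of these contributions blow up.
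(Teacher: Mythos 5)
Your first regime (small $\abs{u-v}$ via the H\"older bound from Proposition~\ref{prop:conttang}) and your use of the scaling $\E(\eta_k)=\lambda_k^{p-q-2}\E(\gamma_k)$ with $p>q+2$ match the paper's strategy. But your second regime has a genuine gap, and you name it yourself: the blow-up argument only produces a contradiction when the two limit tangents are transversal, and the ``doubly tangential'' case $T_1=\pm T_2$ --- two nearly collinear, nearly overlapping arcs at small mutual distance --- is left unresolved, with only a sketch of possible fixes. That case is not a corner case to be patched afterwards; it is exactly the configuration that injectivity alone does not rule out in the limit, and no estimate in your write-up excludes it.

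The paper avoids this case entirely by a different choice of the bad pair: instead of taking arbitrary $u_k,v_k$ with $d_k\to0$, it takes $(s,t)$ realizing the infimum $S$ of $\abs{\gamma(u)-\gamma(v)}$ over $\abs{u-v}\ge\delta$. Either $\abs{s-t}=\delta$, in which case $2S\ge\delta$ directly, or $\abs{s-t}>\delta$ and first-order minimality forces $\gamma'(s)\perp\br{\gamma(s)-\gamma(t)}\perp\gamma'(t)$. After rescaling by $S$ (so the two base points sit at distance exactly $1$, not collapsing to a single point as in your rescaling by $d_k^{1/(1+\alpha)}$), this orthogonality plus the unit separation excludes the single-line configuration, and a compactness/lower-semicontinuity argument (Lemma~\ref{lem:last}) yields a uniform \emph{positive} lower bound $c(\alpha,\mu)$ on the mutual energy of the two rescaled arcs --- no infinite-energy limit is needed. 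Combined with $\E(\gamma_1,\gamma_2)\le S^{p-q-2}\E(\gamma)$ this gives $S\ge(c/M)^{1/(p-q-2)}$. If you want to salvage your route, adopt the minimizing-pair normalization: it converts the problematic tangential geometry into an orthogonal one before you ever blow up.
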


We will give an easy proof that essential boils down to combining 
the regularity we get form Proposition~\ref{prop:conttang} with
a subtle scaling argument. The following lemma will be one of the
essential parts in the proof. To be able to state it,
we set for two arc-length parametrized curves $\gamma_i : I_i \rightarrow \mathbb R$, $i=1,2$,
$I_1,I_2$ open intervals,
\begin{align*}
 \E(\gamma_1, \gamma_2) &:= \E(\gamma_1) + \E(\gamma_2) + {} \\
 &{}\qquad +
 \int_{I_1} \int_{I_2} \br{\frac{\dist(\ell_1(t), \gamma_2(s))^q }{|\gamma_1(s)-\gamma_2(t)|^p} 
 + \frac{\dist(\ell_2(t),\gamma_1(s))^q }{|\gamma_2(s)-\gamma_1(t)|^p}}\d s\d t,
\end{align*}
where $\ell_i(\tau)=\g_{i}(\tau)+\R\dg_{i}(\tau)$ denotes line tangential to $\gamma_i$ at $\gamma_i(\tau)$, $i=1,2$.

We then have

\begin{lemma}\label{lem:last}
 Let $\alpha \in (0,1)$. For $\mu>0$ we let
 $M_\mu$ denote the set of all pairs $(\gamma_1, \gamma_2)$
 of curves $\gamma_i \in \Cia([-1,1],\R^n)$ 
 satisfying
 \begin{enumerate}
  \item $|\gamma_1(0) - \gamma_2 (0)| = 1$,
  \item $\g_{1}'(0)\perp\br{\gamma_1(0)-\gamma_2(0)} \perp \gamma_2'(0)$,
  \item $\|\gamma_{i}'\|_{C^{0,\alpha}} \leq \mu$, \qquad$i=1,2$.
 \end{enumerate}
 Then there is a $c=c(\a,\mu)>0$ such that
 \begin{equation*}
  \E(\gamma_1, \gamma_2) \geq c
  \qquad\text{for all }(\gamma_1, \gamma_2) \in M_\mu.
 \end{equation*}
\end{lemma}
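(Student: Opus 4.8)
\textbf{Proof plan for Lemma~\ref{lem:last}.}
The plan is to argue by contradiction via a compactness-and-scaling argument. Suppose the claim fails, so there is a sequence $(\gamma_1^{(j)},\gamma_2^{(j)})\in M_\mu$ with $\E(\gamma_1^{(j)},\gamma_2^{(j)})\to0$. Properties (i)--(iii) say the basepoints are at unit distance, the tangent directions at the basepoints are orthogonal to the connecting segment, and the $C^{0,\alpha}$-norms of the velocities are uniformly bounded. After translating so that, say, $\gamma_1^{(j)}(0)$ is fixed (and rotating so the connecting segment points in a fixed direction), Arzel\`a--Ascoli applied to the curves and their H\"older-bounded derivatives yields, along a subsequence, $C^1$-convergence $\gamma_i^{(j)}\to\gamma_i^\infty$ on $[-1,1]$, with limit curves again arc-length parametrized and satisfying (i)--(iii); in particular $\gamma_1^\infty(0)\ne\gamma_2^\infty(0)$.

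Next I would pass the energy to the limit. Since on compact subsets of $\{(s,t):\gamma_1^\infty(s)\ne\gamma_2^\infty(t)\}$ the interaction integrand converges uniformly, and near $s=t=0$ one exploits (as in~\eqref{eq:proj}) that $\dist(\ell_i(t),\gamma_k(s))=\abs{\PP[\dg_i(t)](\gamma_k(s)-\gamma_i(t))}$ together with the uniform H\"older bound (iii) to bound the integrand pointwise by $C\abs{s-t}^{q\alpha+q-p}$ with $q\alpha+q-p>-1$ for $\alpha$ close enough to $1$ --- more carefully, one uses that the self-interaction part is controlled by $\seminorm{\dg_i}_{C^{0,\alpha}}^q$ via the same computation as in Proposition~\ref{prop:energ-space}, so that a dominated-convergence / Fatou argument gives $\E(\gamma_1^\infty,\gamma_2^\infty)\le\liminf_j\E(\gamma_1^{(j)},\gamma_2^{(j)})=0$. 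Hence $\E(\gamma_1^\infty,\gamma_2^\infty)=0$. This forces all three integrands to vanish identically, in particular $\dist(\ell_1^\infty(t),\gamma_2^\infty(s))=0$ and $\dist(\ell_2^\infty(t),\gamma_1^\infty(s))=0$ for a.e.\ $s,t$: every point of $\gamma_2^\infty$ lies on every tangent line of $\gamma_1^\infty$ and vice versa. Varying $t$, the first condition already forces $\gamma_2^\infty$ to be a single point (two distinct tangent lines of $\gamma_1^\infty$ cannot share more than one point unless $\gamma_1^\infty$ is a line, and even then the intersection of the lines with the image of $\gamma_2^\infty$ collapses), contradicting that $\gamma_2^\infty$ is arc-length parametrized on $[-1,1]$ (hence has length $2$). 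Alternatively and more robustly: $\E(\gamma_1^\infty)=0$ forces, by~\eqref{eq:proj} and the vanishing of $\abs{\PP[\dg_1^\infty(u)](\gamma_1^\infty(u+w)-\gamma_1^\infty(u))}$, that $\gamma_1^\infty$ is a straight line (cf.\ Proposition~\ref{prop:singular}); then the condition $\dist(\ell_1^\infty(t),\gamma_2^\infty(s))=0$ says $\gamma_2^\infty$ lies on that same line, and symmetrically $\gamma_1^\infty$ lies on the line through $\gamma_2^\infty$ in direction $\dg_2^\infty(0)$. If both curves lie on a common line $L$, then normalization (ii), $\dg_1^\infty(0)\perp(\gamma_1^\infty(0)-\gamma_2^\infty(0))$, is impossible because $\dg_1^\infty(0)$ is a unit vector spanning $L$ while $\gamma_1^\infty(0)-\gamma_2^\infty(0)$ is a nonzero vector along $L$ --- contradiction. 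This establishes the lemma.

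The main obstacle I expect is making the passage to the limit of the \emph{interaction} integral rigorous near the diagonal-type singularity at $(s,t)=(0,0)$ (and, more generally, wherever the two limit curves might come close): one needs a dominating function uniform in $j$. The resolution is exactly the estimate underlying Proposition~\ref{prop:energ-space}: the quantity $\dist(\ell_1^{(j)}(t),\gamma_2^{(j)}(s))$ can be written, using~\eqref{eq:proj}, as the projection of $\int_0^1(\dg_?(\cdot)-\dg_?(\cdot))\d\th$-type differences, which the uniform bound (iii) controls by $C\mu\,(\abs s+\abs t+\abs{\gamma_1^{(j)}(0)-\gamma_2^{(j)}(0)}-\text{something})$; choosing $\alpha$ sufficiently close to $1$ (the statement allows any $\alpha\in(0,1)$, and for the application $\alpha$ will be the H\"older exponent $(p-2)/q-1$ coming from Proposition~\ref{prop:conttang}) keeps the resulting power of $\abs{s-t}$ integrable, so the dominated convergence theorem applies. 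A secondary point worth spelling out is that the limit curves remain \emph{in} $\Cia$, i.e.\ injective; but for the contradiction we only need that they are nonconstant arc-length curves with the orthogonality normalization (ii), which survives $C^1$-convergence, so injectivity of the limit is not actually needed.
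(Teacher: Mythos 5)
Your proposal is correct and takes essentially the same route as the paper: a compactness argument via Arzel\`a--Ascoli using the uniform $C^{0,\alpha}$-bound (iii), lower semicontinuity of $\E(\cdot,\cdot)$ under $C^1$-convergence via Fatou, and the observation that zero energy forces both curves onto a single straight line, which is incompatible with the normalizations (i) and (ii). The paper phrases this as attainment of a positive minimum on $M_\mu$ rather than as a contradiction with a zero-energy sequence, but the substance is identical.
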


\begin{proof}
 It is easy to see that $\E(\gamma_1, \gamma_2)$
 is zero if and only if both $\gamma_1$ and $\gamma_2$ 
 are part of one single straight line.
 We will show that $\E(\cdot,\cdot)$ attains its minimum
 on $M_\mu$. As $M_\mu$ does not contain straight lines by~(i), (ii),
 this minimum is strictly positive which thus proves the lemma.
 
 Let $(\gamma_1^{(n)},\gamma_2^{(n)})$ be a minimizing sequence in 
 $M_\mu$, i.~e.\@ we have
 \begin{equation*}
  \lim_{n\rightarrow \infty}\E (\gamma_1^{(n)},\gamma_2^{(n)})
  = \inf_{M_\mu} \E(\cdot,\cdot).
 \end{equation*}
 Subtracting $\g_{1}(0)$ from \emph{both} curves, i.~e.\@ setting
 \begin{equation*}
  \tilde \gamma_i^{(n)}(\tau) := \gamma_i^{(n)}(\tau) - \gamma_{1}(0),
  \qquad i = 1,2,
 \end{equation*}
 and using Arzel\`a-Ascoli we can pass to a subsequence such that
 \begin{equation*}
  \tilde \gamma_i^{(n)} \to \tilde \gamma_i\qquad\text{in }C^{1}.
 \end{equation*}
 Furthermore, $(\tilde \gamma_1, \tilde \gamma_2) \in M_\mu$
 since $M_\mu$ is closed under convergence in $C^1$.
 Since, by Fatou's lemma, the functional $\E$ is lower semi-continuous
 with respect to $C^1$ convergence, we obtain
 \begin{equation*}
  \E(\tilde \gamma_1, \tilde \gamma_2) 
  \leq \lim_{n\rightarrow \infty} \E (\tilde \gamma_1^{(n)}, \tilde \gamma_2^{(n)})
  = \lim_{n\rightarrow \infty} \E ( \gamma_1^{(n)}, \gamma_2^{(n)})
  =  \inf_{M_\mu} \E(\cdot,\cdot).
  \end{equation*}
\end{proof}

Let us use this lemma to give the

\begin{proof}[Propsition~\ref{prop:bi-Lipschitz}]
Applying Proposition~\ref{prop:conttang} to~\eqref{eq:TP<M} we obtain
\begin{equation*}
 \|\gamma'\|_{C^\alpha} \leq C(M)
\end{equation*}
 for $\alpha = \frac {p-2} q -1>0$.
As an immediate consequence there is a $\delta=\delta(\a,M)>0$
such that
\begin{equation*}
 |u-v| \leq 2 |\gamma(u)- \gamma(v)|
\end{equation*}
for all $u,v \in \mathbb R / \mathbb Z$ with $|u-v| \leq \delta.$
Let now
\[ S := \inf \sets{\!\rule{0ex}{1em}\abs{\gamma(u)-\gamma(v)}}{u,v \in \R/\Z,\abs{u-v}\geq\delta} \leq \tfrac12. \]
 We will complete the proof by estimating $S$ from below.
 Using the compactness of $\set{u,v \in \R/\Z,\abs{u-v}\geq\delta}$,
 there are $s,t \in \mathbb R / \mathbb Z$ with 
$|s-t| \geq \delta$ and
\begin{equation*}
 |\gamma(s)-\gamma(t)| =S.
\end{equation*}
If now $|s-t| =\delta$ we get
\begin{equation*}
 2 S = 2|\gamma (s) - \gamma(t)| \geq \delta
\end{equation*}
and hence
\begin{equation*}
 |u-v| \leq \tfrac12 \le \frac S\delta \le \frac{\abs{\g(u)-\g(v)}}{\delta(\a,M)}
\end{equation*}
for all $u,v \in \mathbb R / \mathbb Z$ with $|u-v| \geq \delta$.
This proves the proposition in this case.
If on the other hand $|s-t| > \delta$ then we get using the
minimality of $|\gamma(s)-\gamma(t)|$
\begin{equation*}
 \gamma'(s) \perp (\gamma(s) - \gamma(t)) \perp \g'(t).
\end{equation*}
We now set for $\tau \in [-1,1]$
\begin{equation*}
 \gamma_1 (\tau) := \frac 1 S \gamma(s+ S\tau)
 \qquad\text{and}\qquad
 \gamma_2(\tau) := \frac 1 S \gamma(t + S \tau).
\end{equation*}
Since 
\begin{equation*}
 \| \gamma'_i\|_{C^{0,\alpha}} \leq \| \gamma'\|_{C^{0,\alpha}}
 \qquad\br{\le C(M)}
\end{equation*}
we can apply the Lemma~\ref{lem:last} to get
\begin{equation*}
 \E(\gamma_1,\gamma_2) \geq c(\a,M) >0.
\end{equation*}
Together with 
\begin{equation*}
 \E (\gamma_1, \gamma_2) %= S^{p-q-2} \E( S\gamma_1, S\gamma_2)
 \leq S^{p-q-2} \E (\gamma)
\end{equation*}
this leads to
\begin{equation*}
 S \geq \left( \frac {c(\a,M)}{\E(\gamma)}\right)^{\frac 1 {p-q-2}}
 \ge\br{\frac{c(\a,M)}{M}}^{\frac1{p-q-2}}.
\end{equation*}
Hence,
\[ \abs{u-v} \le\tfrac12\le\frac{\abs{\g(u)-\g(v)}}{2S}
   \le C(M,p,q)\abs{\g(u)-\g(v)} \]
   for all $u,v \in \mathbb R / \mathbb Z$ with $|u-v| \geq \delta$.
\end{proof}

We are now in the position to prove the following mighty

\begin{theorem}[Compactness]\label{thm:sequentiallycompact}
 For each $M< \infty$ the set
 \begin{equation*}
  A_M:=\sets{\gamma \in \Cia(\mathbb R / \mathbb Z, \mathbb R^n)}{ 
  \E(\gamma) \leq M}
 \end{equation*}
 is sequentially compact in $C^1$ up to translations.
\end{theorem}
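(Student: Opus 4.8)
The plan is to combine the two structural inputs already established: the $C^1$-compactness coming from the uniform Hölder bound on $\gamma'$ (Proposition~\ref{prop:conttang} together with Arzelà--Ascoli) and the uniform bi-Lipschitz estimate of Proposition~\ref{prop:bi-Lipschitz}, which guarantees that any $C^1$-limit of curves in $A_M$ remains injective. The only genuinely new ingredient beyond these two is lower semicontinuity of $\E$ under $C^1$-convergence, which has already been invoked (via Fatou's lemma) in the proof of Lemma~\ref{lem:last} and carries over verbatim here.

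First I would fix a sequence $(\gamma_j)\subset A_M$. By Proposition~\ref{prop:conttang}, applied to the bound $\E(\gamma_j)\le M$, the derivatives $\gamma_j'$ are bounded in $C^{0,\alpha}$ with $\alpha=(p-2)/q-1>0$, uniformly in $j$; in particular the $\gamma_j$ are equi-Lipschitz (speed one) and the $\gamma_j'$ are equicontinuous. After subtracting the constants $\gamma_j(0)$ — this is the ``up to translations'' clause — the translated curves take values in a fixed ball, so Arzelà--Ascoli yields a subsequence (not relabelled) with $\gamma_j\to\gamma$ and $\gamma_j'\to\gamma'$ uniformly, i.e.\ in $C^1(\R/\Z,\R^n)$. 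The limit $\gamma$ is then automatically parametrized by arc-length since $|\gamma'|=\lim|\gamma_j'|=1$ pointwise.

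Next I would verify $\gamma\in\Cia$, i.e.\ that $\gamma$ is injective. This is exactly where Proposition~\ref{prop:bi-Lipschitz} enters: each $\gamma_j$ satisfies $|u-v|\le C(M,p,q)\,|\gamma_j(u)-\gamma_j(v)|$ for all $u,v$, with a constant independent of $j$. Passing to the limit $j\to\infty$ (uniform convergence suffices since both sides are continuous in $u,v$) gives $|u-v|\le C(M,p,q)\,|\gamma(u)-\gamma(v)|$, so $\gamma(u)=\gamma(v)$ forces $u=v$; hence $\gamma$ is injective and $\gamma\in\Cia(\R/\Z,\R^n)$. Finally, lower semicontinuity of $\E$ with respect to $C^1$-convergence — Fatou's lemma applied to the integrand $\abs{\PP(\D\gamma)}^q/\abs{\D\gamma}^p$ in~\eqref{eq:tp-reg2}, which is nonnegative and, by the uniform bi-Lipschitz bound from Proposition~\ref{prop:bilip}, converges pointwise a.e.\ along the subsequence — yields $\E(\gamma)\le\liminf_j\E(\gamma_j)\le M$, so $\gamma\in A_M$ (after translation). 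This shows $A_M$ is sequentially compact in $C^1$ up to translations.

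I do not expect a serious obstacle here; the theorem is essentially a bookkeeping assembly of Propositions~\ref{prop:conttang} and~\ref{prop:bi-Lipschitz}. The one point requiring a little care is the lower semicontinuity step: one must make sure the pointwise a.e.\ convergence of the integrands is legitimate, which it is because $\abs{\D\gamma_j}\ge c|w|$ uniformly (Proposition~\ref{prop:bilip}, whose constant can be taken uniform on $A_M$ by Proposition~\ref{prop:bi-Lipschitz}), so the denominators stay bounded away from zero on $|w|\ge\varepsilon$ and Fatou applies to the nonnegative integrand without any integrability worry. Everything else is routine.
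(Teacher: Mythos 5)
Your proof is correct and follows essentially the same route as the paper: Proposition~\ref{prop:conttang} plus Arzel\`a--Ascoli for the $C^1$-convergent subsequence after subtracting $\gamma_j(0)$, the uniform bi-Lipschitz estimate of Proposition~\ref{prop:bi-Lipschitz} passed to the limit for injectivity, and Fatou's lemma for lower semicontinuity of $\E$ to conclude $\gamma\in A_M$. The extra care you take in justifying the pointwise convergence of the integrands is a welcome (if routine) addition to what the paper leaves implicit.
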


\begin{proof}
 By Proposition~\ref{prop:conttang}
 there are $C(M)< \infty$ and $\a=\a(p,q)>0$ such that
 \begin{equation*}
  \|\dg\|_{C^\alpha} \leq C(M)
 \end{equation*}
 for all $\g\in A_M$ and hence
 \begin{equation*}
  \|\tilde \g\|_{C^{1,\alpha}} \leq C(M) + 1
 \end{equation*}
 where $\tilde \g (u) := \g(u)-\g(0)$. Furthermore, from
 Proposition~\ref{prop:bi-Lipschitz} we get
 the bi-Lipschitz estimate
 \begin{equation*}
  |u-v| \leq C(M,p,q) |\gamma(u)-\gamma(v)|
 \end{equation*}
 for all $u,v \in \mathbb R / \mathbb Z$.
 
 Let now $\g_n \in A_M$. Then
 \begin{equation*}
  \|\tilde \g_n\|_{C^{1,\alpha}} \leq C(M) + 1
 \end{equation*}
 and hence after passing to suitable subsequence we have
 \begin{equation*}
  \tilde \g_n \rightarrow \g_{0} 
 \end{equation*}
 in $C^1$. Since $\g_n$ was parametrized by arc-length,
 $\g_0$ is still parametrized by arc-length and still
 \begin{equation*}
  |u-v| \leq C(M,p,q) |\gamma_{0}(u) - \gamma_{0}(v)|
 \end{equation*}
 for all $u,v \in \mathbb R / \mathbb Z$.
 So, especially, $\g_{0} \in \Cia(\mathbb R / \mathbb Z, \mathbb R^n)$.
 From lower semi-continuity with respect 
 to $C^1$ convergence we infer
 \begin{equation*}
  \E(\g_0) \leq \liminf_{n\rightarrow \infty} \E(\g_n) \leq M.
 \end{equation*}
 So $\g_0 \in A_M$.
\end{proof}

Let us conclude this section by deriving two simple corollaries 
of this sequential compactness and the lower semi-continuity of the energies
with respect to $C^1$-convergence.

The first one states that
the tangent-point energies~$\E$ are in fact knot energies
as defined in the introduction.
The second one, already stated in the introduction,
ensures that there exist minimizers 
of the energies within every knot class ---
which are then smooth by Theorem~\ref{thm:smooth}.

\begin{proposition}[$\E$ is a strong knot energy {\cite[Prop.~5.1, 5.2]{SM7}}]\label{prop:strong-knot-energy}\hfill\\Let~\eqref{eq:sub-critical-range} hold.\\[-2em]
 \begin{enumerate}
  \item If $\seqn[k]{\g}\subset\Wir$ is a sequence uniformly converging to a non-injective curve $\g_\infty\in C^{0,1}$ parametrized by arc-length then $\E(\g_k)\to\infty$.
  \item For given $E,L>0$ there are only finitely many knot types having a representative with $\E<E$ and $\text{length}=L$.
 \end{enumerate}
\end{proposition}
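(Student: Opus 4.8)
The plan is to deduce both assertions from the compactness result in Theorem~\ref{thm:sequentiallycompact} together with the lower semi-continuity of $\E$ under $C^{1}$-convergence, in each case by a contradiction argument against the failure of finiteness.

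For part~(i), suppose $\seqn[k]{\g}\subset\Wir$ converges uniformly to a non-injective arc-length parametrized $\g_\infty\in C^{0,1}$, but $\E(\g_k)\not\to\infty$. Then, passing to a subsequence, $\E(\g_k)\le M$ for some finite $M$, so $\g_k\in A_M$. By Theorem~\ref{thm:sequentiallycompact} (applied after removing translations, which is harmless since uniform convergence already pins down the limit up to the obvious translation) a further subsequence converges in $C^{1}$ to some $\g_0\in\Cia(\R/\Z,\R^n)$, and by uniqueness of limits $\g_0=\g_\infty$ (up to translation). But $\g_0$ is injective by membership in $\Cia$, contradicting the assumed non-injectivity of $\g_\infty$. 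This proves $\E(\g_k)\to\infty$. One subtlety to address carefully: the $\g_k$ are only assumed \emph{regular}, not arc-length parametrized, so one should first reparametrize each $\g_k$ by arc-length — this changes neither the energy (it depends only on the image and multiplicity) nor the uniform limit being non-injective — in order to land in $A_M$; alternatively one invokes the bi-Lipschitz estimate of Proposition~\ref{prop:bi-Lipschitz} directly. I expect this reparametrization bookkeeping, rather than any analytic estimate, to be the only place demanding care.

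For part~(ii), fix $E,L>0$ and suppose for contradiction there were infinitely many distinct knot types each admitting a representative $\g_k$ with $\E(\g_k)<E$ and $\mathrm{length}(\g_k)=L$. After rescaling to unit length (which multiplies the energy by a fixed power of $L$, keeping it bounded, by the scaling behaviour $\E(\lambda\g)=\lambda^{q+2-p}\E(\g)$ visible from~\eqref{eq:tp-reg2}) and reparametrizing by arc-length, we may assume $\g_k\in A_{M}$ for a suitable $M=M(E,L,p,q)$. By Theorem~\ref{thm:sequentiallycompact}, a subsequence converges in $C^{1}$ (up to translations) to some $\g_0\in\Cia(\R/\Z,\R^n)$. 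Since $C^{1}$-convergence of embedded curves forces the knot types to agree for $k$ large (an ambient isotopy can be built from a $C^{1}$-small perturbation of an embedding, using the uniform bi-Lipschitz bound to keep a tubular neighbourhood of definite radius), all but finitely many of the $\g_k$ represent the same knot type as $\g_0$, contradicting the assumption that they realize infinitely many distinct types.

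The main obstacle in part~(ii) is the standard but not entirely trivial fact that $C^{1}$-closeness of two embedded closed curves implies ambient isotopy; the quantitative input needed here — a lower bound on the ``reach'' or on the radius of a uniform tubular neighbourhood — is precisely furnished by the uniform bi-Lipschitz estimate of Proposition~\ref{prop:bi-Lipschitz} combined with the uniform $C^{1,\alpha}$-bound from Proposition~\ref{prop:conttang}. Since this is exactly the argument carried out by Strzelecki and von der Mosel in~\cite[Prop.~5.1, 5.2]{SM7} for the classical tangent-point energies, and all the ingredients they use (uniform bi-Lipschitz bound, uniform modulus of continuity of the tangent, $C^{1}$-compactness, lower semi-continuity) are now available in our setting, the proof transfers essentially verbatim; I would simply cite~\cite{SM7} for the isotopy step rather than reproduce it.
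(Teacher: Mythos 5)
Your proposal is correct and follows essentially the same route as the paper: both parts are deduced from the $C^1$-compactness of energy-bounded sets (Theorem~\ref{thm:sequentiallycompact}) together with the uniform bi-Lipschitz estimate, arguing by contradiction, with part~(ii) closed off by the fact that $C^1$-close embedded curves are ambient isotopic. The only cosmetic difference is that the paper invokes the openness of knot classes in $C^1$ from~\cite{blatt:isot,reiter:isot} for that last step, where you propose to import the tubular-neighbourhood isotopy argument from~\cite{SM7}; your explicit handling of the arc-length reparametrization and length normalization is a welcome extra precision rather than a deviation.
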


\begin{proof}
 The first statement immediately follows from the bi-Lipschitz estimate
 in Proposition~\ref{prop:bi-Lipschitz}, as a sequence with bounded energy 
 would be sequentially compact in $\Cia$ and thus cannot uniformly converge 
 to a non-injective curve.   
 
 To show the second statement, let us assume that it was wrong, i.~e.\@
 that there are curves $\seqn[n]{\g}$ of length $L$,
 all belonging to different knot classes,
 with energy less than $E$. Of course we can assume that
 $L=1$. 
 Theorem~\ref{thm:sequentiallycompact} tells us, that
 after suitable translations and going to a subsequence 
 we can assume that
 there is a $\g_0 \in A_M$ such that
 $\g_n \rightarrow \g_0$ in $C^1$.
 As the intersection of every knot class with $C^1$
 is an open set in $C^1$~\cite[Cor.~1.5]{blatt:isot}
 (see~\cite{reiter:isot} for an explicit construction),
 this implies that almost all $\g_n$ belong to the
 same knot class as $\g_0$, which is a contradiction.
\end{proof}

\begin{proof}[Theorem~\ref{thm:existence}]
 Let $\seqn[k]{\g}\in \Cia$ be a minimal sequence for $\E$ in a given knot class $K$,
 i.~e.\@ let
 \begin{equation*}
  \lim_{k\rightarrow \infty} \E(\g_{k}) = \inf_{\Cia\cap K} \E.
 \end{equation*}

 After passing to a subsequence and suitable translations, we hence get by Theorem~\ref{thm:sequentiallycompact} 
 a ~$\g_0\in \Cia$ with $\g_k\to\g_0$ in $C^1$.
 Again by~\cite{blatt:isot,reiter:isot}
 the curve $\g_0$ belongs to the same knot class
 as the elements of the minimal sequence $\seqn[k]{\g}$. 
 The lower semi-continuity of $\E$ furthermore implies that 
 \begin{equation*}
  \inf_{\Cia\cap K} \E\leq \E(\g_0)
  \leq  \lim_{n\rightarrow \infty} \E(\g_n)=\inf_{\Cia\cap K}\E.
 \end{equation*}
 Hence, $\g_0$ is the minimizer we have been searching for.
\end{proof}

By the same reasoning one derives the existence of a global minimizer of~$\E$.

\begin{remark}[Strange range]\label{rem:strange}
 On $p\in[2q+1,q+2)$, $p,q>0$, see the hatched area in Figure~\ref{fig:range},
 we find the strange behavior that there are no
 closed finite-energy $C^{3}$-curves
 while self-intersections, and in particular corners, are not penalized.
 So piecewise linear curves (polygonals) have finite energy.
 
 The latter can be seen by adapting the calculation~\eqref{eq:cross}.
 For the former we recall that a closed
 arc-length parametrized $C^2$-curve must have
 positive curvature $\abs{\g''}$ at some point $u_{0}$ and by continuity
 there are $c,\delta>0$ with
 $\abs{\g''(u_{0}+w)}\ge c>0$
 for all $w\in[-2\delta,2\delta]$.
 As $\g''\perp\dg$ we may lessen $\delta$
 (if necessary) to obtain
 $\abs{\sp{\g''(u+w),\dg(u)}}\le\tfrac12\abs{\g''(u+w)}$
 for all $u\in[u_{0}-\delta,u_{0}+\delta]$, $w\in[-\delta,\delta]$.
 So $\E(\g)$ is bounded below by
 \begin{align*}
  &\int\limits_{u_{0}-\delta}^{u_{0}+\delta}
  \int\limits_{-\delta}^{\delta}
  w^{2q-p}\Bigg(\Bigg|\int\limits_{0}^{1}(1-\th)\g''(u+\th w)\d\th\Bigg|^{2}
  -\Bigg|\sp{\int\limits_{0}^{1}(1-\th)\g''(u+\th w)\d\th,\dg(u)}\Bigg|^{2}\Bigg)^{q/2}\d w\d u \\
  &=\int\limits_{u_{0}-\delta}^{u_{0}+\delta}
  \int\limits_{-\delta}^{\delta}
  w^{2q-p}\Bigg(\tfrac12\iint\limits_{[0,1]^{2}}
  (1-\th_{1})(1-\th_{2})
  \Bigg[
  \abs{\g''(u+\th_{1}w)}^{2}
  +\abs{\g''(u+\th_{2}w)}^{2}-{} \\
  &\qquad\qquad\qquad\qquad
  {}-\abs{\g''(u+\th_{1}w)-\g''(u+\th_{2}w)}^{2}
  -\abs{\sp{\g''(u+\th_{1}w),\dg(u)}}^{2} -{} \\
  &\qquad\qquad\qquad\qquad
  {}-\abs{\sp{\g''(u+\th_{2}w),\dg(u)}}^{2}
  +\abs{\sp{\g''(u+\th_{1}w)-\g''(u+\th_{2}w),\dg(u)}}^{2}\Bigg] \\
  &\qquad\qquad\qquad\qquad
  {}\d\th_{1}\d\th_{2}\Bigg)^{q/2}\d w\d u.
 \end{align*}
 Lessening $\delta>0$ once more, the term
 $\abs{\g''(u+\th_{1}w)-\g''(u+\th_{2}w)}^{2}
 \le w^{2}\lnorm[\infty]{\g'''}^{2}$ can be made
 so small that the square bracket is $\ge\tilde c>0$.
 This gives $\E(\g)=\infty$.
\end{remark}
\section{First variation}\label{sect:frech}

Let turn to the proof of Theorem~\ref{thm:first-var}. In contrast to
the investigation of O'Hara's energies~\cite{blatt-reiter2},
we do not need to cut off the singular part in the energies. Instead, 
a straightforward calculation of the first variation using Lebesgue's 
theorem of dominated convergence will prove that
\begin{equation*}
 \delta \E(\g,h) := \lim_{\tau \to 0} \frac {\E (\gamma+\tau h) - \E(\gamma)}{\tau}
\end{equation*}
exists.

For $\g \in \Wia (\mathbb R / \mathbb Z, \mathbb R^n)$
and $h \in \W[(p-1)/q,q]$ let
\[\gt:=\g+\tau h \qquad \text{for any } \tau\in [-\tau_0,\tau_0] \]
where $\tau_0 \in (0,1)$
is so small that
\begin{equation}\label{eq:tau0}
 \abs\dgt\ge\tfrac12 \qquad\text{on }\R/\Z
\end{equation}
and each curve $\g+\tau h$, $\tau\in[-\tau_{0},\tau_{0}]$, is still injective.
Then, recalling~\eqref{eq:short-notation},
\begin{equation*}
 \frac {\E (\gamma+\tau h) - \E(\gamma)}{\tau}
 = \Wint I_\tau(u,w)\d w\d u
\end{equation*}
where
\begin{equation*}
  I_\tau(u,w) := \frac 1 \tau \left( 
   \frac{\abs{\PPT\br{\T\g_\tau}}^q}{\abs{\T\g_\tau}^p} \abs{\dg_\tau(u+w)}\abs{\dg_\tau(u)} 
   -  \frac{\abs{\PP\br{\T\g}}^q}{\abs{\T\g}^p} \abs{\dg(u+w)}\abs{\dg(u)}
  \right).
\end{equation*}

To calculate the pointwise limit of $I_\tau (u,w)$ as $\tau \to 0$,
we observe using $|\dg|\equiv 1$ that
\begin{eqnarray*}
  \dd \Big|_{\tau=0} \abs{\dg_\tau(u)} 
  = \sp{\dg(u),\dh(u)}, \\
  \dd \Bigg|_{\tau =0} \fracabs{\dgt(u)}
  =\PP \dh(u),
 \end{eqnarray*}
and
\begin{equation*}
 \PP \left( \dd\Big|_{\tau=0} { \PPT v }\right)
 = - \sp{v,\dg_\tau (u)} \PP \dh(u),
\end{equation*}
which gives
\begin{align*}
 &{\sp{\PP\T\g ,\T h} - \sp{\T\g,\dg(u)}\sp{\PP \T\g,\dh(u)}} \\
 & = {\sp{\PP\br{\T\g-w\dg(u)},\T h-\sp{\T\g,\dg(u)}\dh(u)}}.
\end{align*}
Hence,
\begin{align*}
 &\lim_{\tau \to 0} I_{\tau}(u,w) \\
 &\qquad= q\frac{\sp{\PP\br{\T\g-w\dg(u)},\T h-\sp{\T\g,\dg(u)}\dh(u)}}{\abs{\T\g}^p} \abs{\PP\br{\T\g-w\dg(u)}}^{q-2} \\
 &\qquad\quad{} - p\frac{\abs{\PP\br{\T\g}}^q\sp{\T\g,\T h}}{\abs{\T\g}^{p+2}} \\
 &\qquad\quad{} + \frac{\abs{\PP\br{\T\g}}^q}{\abs{\T\g}^{p}}\br{\sp{\dg(u),\dh(u)}+\sp{\dg(u+w),\dh(u+w)}}.
 \end{align*}
 We decompose
\begin{align*}
 &I_\tau(u,w) \\
 &=  \frac 1 \tau\Bigg( \frac{\abs{\PP[\dgt(u)]\br{\T\gt}}^q-\abs{\PP\br{\T\g}}^q}{\abs{\T\gt}^p} \abs{\dgt(u+w)}\abs{\dgt(u)} \\
 &\qquad\qquad{}+ \abs{\PP\br{\T\g}}^q\br{\frac1{\abs{\T\gt}^p}-\frac1{\abs{\T\g}^p}} \abs{\dgt(u+w)}\abs{\dgt(u)} \\
 &\qquad\qquad{}+ \frac{\abs{\PP\br{\T\g}}^q}{\abs{\T\g}^p}\br{\abs{\dgt(u+w)}\abs{\dgt(u)}-\abs{\dg(u+w)}\abs{\dg(u)}} \Bigg)
 \\
 &\qquad=: \Fe[1] + \Fe[2] + \Fe[3].
\end{align*}
We will give uniform majorants for these three terms.
In order to treat the first term we first consider
\begin{align*}
 \P[\dgt(u)]a-\P a &=\sp{a,\dgt}\dgt\br{\frac1{\abs{\dgt}^2}-1}+\sp{a,\dgt}\dgt-\sp{a,\dg}\dg \\
 &= \tau\br{-\sp{a,\dgt}\dgt\frac{2\sp{\dg,\dh}+\tau\abs{\dh}^2}{\abs{\dgt}^2} + \sp{a,\dh}\dgt+\sp{a,\dg}\dh}
\end{align*}
which for $a:=\T\g-w\dg(u)$ gives
\begin{align*}
 &\PP[\dgt(u)]\br{\T\gt}-\PP\br{\T\g} \\
 &=\PP[\dgt(u)]\br{\T\gt-w\dgt(u)}-\PP\br{\T\g-w\dg(u)} \\
 &=\PP[\dgt(u)]\br{\T\g-w\dg(u)}+\tau\PP[\dgt(u)]\br{\T h-w\dh(u)}-\PP\br{\T\g-w\dg(u)} \\
 &=\P\br{\T\g-w\dg(u)} - \P[\dgt(u)]\br{\T\g-w\dg(u)}+\tau\PP[\dgt(u)]\br{\T h-w\dh(u)} \\
 &= \tau\br{\sp{a,\dgt}\dgt\frac{2\sp{\dg,\dh}+\tau\abs{\dh}^2}{\abs{\dgt}^2} - \sp{a,\dh}\dgt-\sp{a,\dg}\dh+\PP[\dgt(u)]\br{\T h-w\dh(u)}}.
\end{align*}
Recalling~\eqref{eq:tau0} and $\abs\dg\equiv1$,
we hence get a constant $C$ depending on $\norm\dh_{L^\infty}$ and $\tau_0$
such that
\begin{equation*}
 \left|\PP[\dgt(u)]\br{\T\gt}-\PP\br{\T\g} \right|
 \leq C |\tau| \left( |\T\g-w\dg(u)| + |\T h-w h'(u)|\right).
\end{equation*}

By $\abs{a^{q}-b^{q}}\le q\abs{a-b}\max\br{a^{q-1},b^{q-1}}$
for $a,b\ge0$, $q>1$ we deduce for $C=C\br{\lnorm[\infty]\dh,q,\tau_{0}}>0$
\begin{align*}
 &\abs{\abs{\PP[\dgt(u)]\br{\T\gt}}^{q}-\abs{\PP\br{\T\g}}^{q}} \\
 &\le C\abs\tau\br{\abs{\T\g-w\dg(u)}+\abs{\T h-w\dh(u)}}\br{\abs{\T\gt-w\dgt(u)}^{q-1}+\abs{\T\g-w\dg(u)}^{q-1}} \\
 &\le C\abs\tau\br{\abs{\T\g-w\dg(u)}+\abs{\T h-w\dh(u)}}\br{\abs{\T\g-w\dg(u)}^{q-1}+\abs{\T h-w\dh(u)}^{q-1}} \\
 &\le C\abs\tau\br{\abs{\T\g-w\dg(u)}^q+\abs{\T h-w\dh(u)}^q} \\
 &\le C\abs\tau\abs w^{p}\frac{\abs{\int_{0}^{1}\br{\dg(u+\theta w)-\dg(u)}\d\theta}^q+\abs{\int_{0}^{1}\br{\dh(u+\theta w)-\dh(u)}\d\theta}^q}{\abs w^{p-q}}
\end{align*}
and hence, by Equations~\eqref{eq:bilip}, \eqref{eq:tau0},
\begin{equation*}
 \abs{\Fe[1]} \leq C_{\g}\frac{\abs{\int_{0}^{1}\br{\dg(u+\theta w)-\dg(u)}\d\theta}^q+\abs{\int_{0}^{1}\br{\dh(u+\theta w)-\dh(u)}\d\theta}^q}{\abs w^{p-q}}.
\end{equation*}

Applying Jensen's inequality, one sees
\begin{align*}
 &\Wint\abs{F_{1}}\d w\d u \\
 &\le C_{\g}\int_{\mathbb R / \mathbb Z} \int_{-1/2}^{1/2} \int_{0}^{1}
 \frac{\abs{\dg(u+\theta w)-\dg(u)}^q+\abs{\dh(u+\theta w)-\dh(u)}^q}{\abs w^{p-q}} \d\theta\d w\d u\\
 &\le C_{\g}\int_{\mathbb R / \mathbb Z} \int_{-1/2}^{1/2}
 \frac{\abs{\dg(u+w)-\dg(u)}^q+\abs{\dh(u+w)-\dh(u)}^q}{\abs w^{p-q}} \d w\d u,
\end{align*}
so we have found an $L^1$-majorant for $\Fe[1]$.

The same conclusions lead to a majorant for the remaining terms to which we pass now.
Using arc-length parametrization and~\eqref{eq:tau0} together with $\abs{a^{-p}-b^{-p}} \le C_{\mu,p}\abs{a-b}$ for $a,b\ge\mu>0$,
we compute
\begin{align*}
 \abs{\Fe[2]}
 &\le \frac C \tau  \abs{\PP\br{\T\g}}^q\abs{\frac1{\abs{\T\gt}^p}-\frac1{\abs{\T\g}^p}} \\
 &\le \frac C \tau  \abs{\PP\br{\T\g-w\dg(u)}}^q\abs{w}^{-p}\abs{\br{\frac{\abs w}{\abs{\T\gt}}}^p-\br{\frac{\abs w}{\abs{\T\g}}}^p} \\
 &\le \frac C \tau  \abs{\int_0^1\br{\dg(u+\theta w)-\dg(u)}\d\theta}^q \abs{w}^{q-p}\abs{\frac{\abs{\T\gt}}{\abs w}-\frac{\abs{\T\g}}{\abs w}} \\
 &\le C \norm{\dh}_{L^{\infty}}  \int_0^1\abs{\dg(u+\theta w)-\dg(u)}^q\d\theta \abs{w}^{q-p}
\end{align*}
and get using a simple substitution
\begin{equation*}
\int_{\mathbb R / \mathbb Z} \int_{-1/2}^{1/2} \abs{F_{2}} \d w \d u
\le C \Wint \frac{\abs{\dg(u+w)-\dg(u)}^q}{\abs w^{p-q}} \d w\d u.
\end{equation*}
Finally
\[ \abs\dgt-\abs\dg = \abs\dgt-1 = \frac{\abs\dgt^2-1}{\abs\dgt+1} = \tau \frac{2\sp{\dg,\dh}+\tau\abs\dh^2}{\abs\dgt+1} \]
permits to proceed as in the proof of Proposition~\ref{prop:energ-space}. We arrive at
\[ \Wint\abs{\Fe[3]}\d w\d u\le C \Wint\frac{\abs{\dg(u+w)-\dg(u)}^q}{\abs w^{p-q}} \d w\d u. \]

 Hence, the functions $I_\tau$ have a uniform $L^1$ majorant. Thus
 Lebesgue's theorem of dominant convergence implies
 \begin{align*}
 &\delta \E( \gamma, h)  \\
  &= \Wint \Bigg\{q\frac{\sp{\PP\br{\T\g-w\dg(u)},\T h-\sp{\T\g,\dg(u)}\dh(u)}}{\abs{\T\g}^p} \abs{\PP\br{\T\g-w\dg(u)}}^{q-2} \\
 &\qquad\qquad\qquad{} - p\frac{\abs{\PP\br{\T\g}}^q\sp{\T\g,\T h}}{\abs{\T\g}^{p+2}} \\
 &\qquad\qquad\qquad{} + \frac{\abs{\PP\br{\T\g}}^q}{\abs{\T\g}^{p}}\br{\sp{\dg(u),\dh(u)}+\sp{\dg(u+w),\dh(u+w)}} \Bigg\}.
 \end{align*}

\begin{remark}[Continuous differentiability]
Estimating more carefully, one can even show that $\delta\E(\cdot,\cdot)$ 
is continuous on $\Wir\times\W[(p-1)/q,q]$,
i.~e.\@
\begin{equation}
 \E\in C^1\br{\Wir\rzd} \qquad\text{for }p\in(q+2,2q+1).
\end{equation}
In contrast to O'Hara's energies, even an explicit formula of the first variation
can be given at \emph{arbitrary} $\gamma \in \Wir[(p-1)/q,q]$ in the direction
$h \in \W[(p-1)/q,q]$. In fact, we have
\begin{align*}
 &\delta\E(\g,h)\\
 &=\Wint\Bigg\{q\frac{\sp{\PP \T\g,\T h}-\sp{\T\g,\frac{\dg(u)}{|\dg(u)|^2}}
 \sp{\PP\T\g,\dh(u)}}{\abs{\T\g}^p} \abs{\PP\T\g}^{q-2} \\
 &\qquad\qquad\qquad{} - p\frac{\abs{\PP\br{\T\g}}^q\sp{\T\g,\T h}}{\abs{\T\g}^{p+2}} \\
 &\qquad\qquad\qquad{} + \frac{\abs{\PP\br{\T\g}}^q}{\abs{\T\g}^{p}}\br{\sp{\frac{\dg(u)}{|\dg(u)|^2},\dh(u)}+\sp{\frac{\dg(u+w)}{|\dg(u+w)|^{2}},\dh(u+w)}} \Bigg\} 
 \\
 & \qquad\qquad\quad{}\cdot |\dg(u)| \ |\dg(u+w)|\d u \d w.
\end{align*}
\end{remark}
\section{Bootstrapping}\label{sect:bootstrap}

\renewcommand{\E}[1][p,2]{\ensuremath{\mathrm{TP}^{(#1)}}}
\renewcommand{\W}[1][(p-3)/2,2]{\ensuremath{W^{\scriptstyle #1}}}
\renewcommand{\Wia}[1][(p-3)/2,2]{\ensuremath{W_{\mathrm{ia}}^{#1}}}
\renewcommand{\Wir}[1][(p-3)/2,2]{\ensuremath{W_{\mathrm{ir}}^{#1}}}

In this section we consider the non-degenerate sub-critical case
\begin{equation}\tag{\ref{eq:regularity-range}}
 p\in(4,5), \qquad q=2
\end{equation}
which corresponds to the yellow line in Figure~\ref{fig:range}.

In order to start a bootstrapping process, we have to
rearrange the Euler-Lagrange Equation for $\E$
exhibiting a gap of regularity between suitable terms.
To this end, we decompose
$\delta\E$ into the sum of a bilinear elliptic term~$\Q$
and a remainder term~$\Re[]$ of lower order. The former is defined via
\begin{equation*}
 \Q(f,g) := \Wint \frac{\sp{\T f-wf'(u),\T g-wg'(u)}}{\abs w^{p}}\d w\d u
\end{equation*}
for $f,g \in \W[(p-1)/2,2](\mathbb R /\mathbb Z, \mathbb R^n )$.

The operator $\Q$ is characterized by the following

\begin{proposition}[Bilinear elliptic operator]\label{prop:Q}
 The functional $\Q$ is bilinear on $\br{\W[(p-1)/2,2]}^{2}$, more precisely
 \[ \Q(f,g) = \sum_{k\in\Z} \rho_k\sp{\hat f_k,\hat g_k}_{\C^d} \qquad \text{where } \rho_k = c\abs k^{p-1} + o\br{\abs k^{p-1}} \text{ as }\abs k\nearrow\infty \]
 and $c>0$.
 Here $\hat\cdot_k$ denotes the $k$-th Fourier coefficient
 \[
  \hat f_k := \int_{0}^1 f(x) e^{-2\pi ikx } \d x.
 \]
\end{proposition}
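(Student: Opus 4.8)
The strategy is to compute the Fourier multiplier of $\Q$ directly. Since $\Q$ is manifestly bilinear and symmetric, it suffices to diagonalise it. Plugging in $f(x)=e^{2\pi ikx}v$ and $g(x)=e^{2\pi i\ell x}w$ for $v,w\in\C^d$ and $k,\ell\in\Z$, the key observation is that the integrand factorises: writing $\T f - wf'(u)$ at the exponential $e^{2\pi ikx}v$ gives $e^{2\pi iku}\bigl(e^{2\pi ikw}-1-2\pi ik\,w\bigr)v$ (here $w$ is the increment variable, not the vector — I will rename the increment to $\omega$ throughout the proof to avoid the clash with the paper's earlier notation), so that
\begin{equation*}
 \Q(f,g) = \sp{v,w}_{\C^d}\int_{\R/\Z}e^{2\pi i(k-\ell)u}\d u \int_{-1/2}^{1/2}\frac{\bigl(e^{2\pi ik\omega}-1-2\pi ik\omega\bigr)\overline{\bigl(e^{2\pi i\ell\omega}-1-2\pi i\ell\omega\bigr)}}{\abs\omega^{p}}\d\omega.
\end{equation*}
The $u$-integral is $\delta_{k\ell}$, which already shows $\Q(f,g)=\sum_{k}\rho_k\sp{\hat f_k,\hat g_k}_{\C^d}$ with
\begin{equation*}
 \rho_k = \int_{-1/2}^{1/2}\frac{\bigl|e^{2\pi ik\omega}-1-2\pi ik\omega\bigr|^2}{\abs\omega^{p}}\d\omega.
\end{equation*}
In particular $\rho_k\geq0$, and $\rho_k>0$ for $k\neq0$ since the numerator does not vanish identically; the positivity of the leading constant will drop out of the asymptotics below.

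The remaining task is the asymptotic expansion $\rho_k = c\abs k^{p-1}+o(\abs k^{p-1})$ as $\abs k\to\infty$. The plan is the standard rescaling: substitute $\omega = t/\abs k$, so that $\rho_k = \abs k^{p-1}\int_{-\abs k/2}^{\abs k/2}\bigl|e^{2\pi i\,\mathrm{sgn}(k)t}-1-2\pi i\,\mathrm{sgn}(k)t\bigr|^2\,\abs t^{-p}\d t$. Since $p\in(4,5)$, near $t=0$ the numerator is $\mathcal O(t^4)$, so $\abs t^{4-p}$ is integrable there; near $t=\pm\infty$ the numerator grows like $t^2$, so $\abs t^{2-p}$ is integrable there as well (as $p>3$). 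Hence the integral converges to the finite positive constant
\begin{equation*}
 c := \int_{\R}\frac{\bigl|e^{2\pi it}-1-2\pi it\bigr|^2}{\abs t^{p}}\d t
\end{equation*}
(the same value for $k\to+\infty$ and $k\to-\infty$, by $t\mapsto -t$), and dominated convergence — with dominating function $\min(C\abs t^{4-p},C\abs t^{2-p})$ independent of $k$ — gives $\rho_k/\abs k^{p-1}\to c$. This yields exactly the claimed expansion.

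The main obstacle — really the only non-bookkeeping point — is justifying the interchange of summation and integration implicit in ``$\Q(f,g)=\sum_k\rho_k\sp{\hat f_k,\hat g_k}$'' for \emph{general} $f,g\in\W[(p-1)/2,2]$, rather than for finite trigonometric polynomials. Here I would argue by density: for trigonometric polynomials the identity is the computation above, and one checks that $\sum_k\rho_k\abs{\hat f_k}^2 \asymp \seminorm{f}_{\W[(p-1)/2,2]}^2$ (up to the $L^2$-norm) using $\rho_k\asymp(1+\abs k)^{p-1}$ together with the Fourier characterisation of the Sobolev–Slobodeckiĭ seminorm; this makes both sides continuous in the $\W[(p-1)/2,2]$-norm and the identity extends. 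Alternatively, and perhaps more cleanly, one can invoke Fubini directly on the double integral defining $\Q$ after expanding $f,g$ in Fourier series in $L^2$, since the kernel $\abs\omega^{-p}\,(\cdots)$ paired against $f,g\in W^{(p-1)/2,2}$ is absolutely integrable — this absolute integrability is precisely what the $\mathcal O(\abs\omega^{2q-p})=\mathcal O(\abs\omega^{4-p})$ behaviour near $\omega=0$ noted after~\eqref{eq:proj} buys us. I would present the density argument as the main line and relegate the equivalence $\rho_k\asymp(1+\abs k)^{p-1}$ (which follows from the same rescaling, now with two-sided bounds instead of a limit) to a short remark.
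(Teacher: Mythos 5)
Your proposal is correct and follows essentially the same route as the paper: diagonalise $\Q$ in Fourier space, identify the multiplier $\rho_k=\int_{-1/2}^{1/2}\abs{e^{2\pi ik\omega}-1-2\pi ik\omega}^2\abs\omega^{-p}\d\omega$ (the paper writes this numerator as $\digamma(2\pi k\omega)$ with $\digamma(x)=2-2\cos x-2x\sin x+x^2$, which is the same quantity), and obtain the asymptotics $\rho_k\sim c\abs k^{p-1}$ by rescaling, using $O(x^4)$ decay at $0$ and $O(x^2)$ growth at infinity exactly as you do. Your explicit square form makes the positivity of $\rho_k$ immediate, and your density/Fubini justification of the sum--integral interchange is a point the paper passes over silently; otherwise the two arguments coincide.
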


\begin{proof}
  Since any $L^2$ function is uniquely determined by its Fourier series,
  we obtain for $f,g\in\W[(p-1)/2,2]$
  \begin{equation}
      \Q(f,g) =
      2 \int_0^{1/2} \sum_{k\in\Z}
      \sp{\hat f_k,\hat g_k}_{\C^d}
      \frac{\digamma(2\pi kw)}{w^{p}} \d w,\label{eq:qk}
  \end{equation}
  where $\digamma(x) := 2-2\cos x-2x\sin x+x^2$, which turns out to be even and monotone increasing on
  $\set{x\ge0}$, for $\dot\digamma(x) = 2x(1-\cos x) \ge 0$.
  Now $\digamma(0)=0$ implies that $\digamma$
  is non-negative on~$\R$, so
  $\Q(f,g) = \sum_{k\in\Z} \rho_k\sp{\hat f_k,\hat g_k}_{\C^d}$ where
  \begin{equation}
    \rho_k := 2 \int_0^{1/2}\frac{\digamma(2\pi kw)}{w^{p}} \d w = 2\abs{2\pi k}^{p-1} \int_0^{\abs k\pi}\frac{\digamma(x)}{x^{p}} \d x
    \label{eq:Q=q}
  \end{equation}
  is finite by $\digamma(x)=O(x^{4})$ as $x\to0$.
  Obviously, $\rho_k$ is monotone increasing in $\abs k$, so $\lim_{\abs k\to\infty}\rho_k\abs k^{-p+1} \in (0,\infty)$ by $\digamma(x)=O(x^{2})$ as $\abs x\to\infty$.
\end{proof}

Using~\eqref{eq:dE}, we now consider the remainder term
\begin{align}
 &\Re(\g,h) := \delta\E(\g,h)-2\Qe(\g,h) \label{eq:Re}\\
 &= \Wint \Bigg\{2\sp{\PP\br{\T\g-w\dg},\T h-\sp{\T\g,\dg}\dh} \br{\frac1{\abs{\T\g}^p}-\frac1{\abs w^p}} \nonumber\\
 &\qquad\qquad\qquad{} + 2 \frac{\sp{\PP\br{\T\g-w\dg},\T h-\sp{\T\g,\dg}\dh}-\sp{\T\g-w\dg,\T h-w\dh}}{\abs w^p} \nonumber\\
 &\qquad\qquad\qquad{} - p \frac{\abs{\PP\br{\T\g}}^2\sp{\T\g,\T h}}{\abs{\T\g}^{p+2}} \nonumber\\
 &\qquad\qquad\qquad{} + \frac{\abs{\PP\br{\T\g}}^2}{\abs{\T\g}^{p}}\br{\sp{\dg(u),\dh(u)}+\sp{\dg(u+w),\dh(u+w)}} \Bigg\} \d w\d u\nonumber\\
 &=: \Re[1] + \Re[2] + \Re[3] + \Re[4].\nonumber
\end{align}
Interestingly, all these terms have the same structure
so we can treat them simultaneously.
As in analysis the exact form of a multilinear mapping $\br{\R^{n}}^{N}\to\R$ 
does not matter at all,
let us introduce the $\oast$ notation which represents any sort of
these operators,
e.~g., $\sp{\br{a\otimes b}c,d}=a\oast b\oast c\oast d$
for $a,b,c,d\in\R^{n}$.

\begin{lemma}[Structure of the remainder term]\label{lem:struc-rem}
 The term $\Re(\g,h)$ is a (finite) sum of expressions of type
 \begin{equation*}
  \Wint \idotsint_{[0,1]^{K}}
  \gp(u,w) \oast\dh(u+s_{K}w) \d\theta_{1}\cdots\d\theta_{K}\d w\d u
 \end{equation*}
 where
 \begin{equation*}
  \gp(u,w) := \G\br{\abs{\frac{\T\g}w}}\frac{\abs{\dg(u+s_{1}w)-\dg(u+s_{2}w)}^{2}}{\abs w^{p-2}}\br{\bigoast_{i=3}^{K-1}\dg(u+s_{i}w)},
  \end{equation*}
  $\G$ is some analytic function defined on $[c,\infty)$,
  and $s_{i}\in\set{0,\theta_{i}}$ for all $i=1,\dots,K$.
\end{lemma}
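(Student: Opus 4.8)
The plan is to verify the claimed structure term by term in the decomposition $\Re(\g,h) = \Re[1] + \Re[2] + \Re[3] + \Re[4]$ given in \eqref{eq:Re}, showing that each summand can be rewritten in the asserted $\oast$-form. The unifying observation is that every term carries a factor of the shape ``(something quadratic in the difference quotients of $\dg$) times (an analytic function of $\abs{\T\g/w}$) times (a product of factors $\dg(u+s_iw)$) times one factor $\dh(u+s_Kw)$''. The quadratic-difference factor is the crucial one: it is exactly the gain of regularity we need, and it appears because of the projection identity \eqref{eq:proj}, $\PP\br{\D\g} = \PP\br{\D\g-w\dg(u)}$, which forces the normal projection to annihilate the leading-order part of $\T\g$.

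First I would treat $\Re[2]$, which is the ``algebraic'' term with no $\abs{\T\g}^{-p}$-denominator issues. Expanding both inner products and using $\PP a = a - \sp{a,\dg(u)}\dg(u)$ together with $\abs\dg\equiv1$, one checks that $\sp{\PP\br{\T\g-w\dg},\T h-\sp{\T\g,\dg}\dh} - \sp{\T\g-w\dg,\T h - w\dh}$ collapses — after writing $\T\g - w\dg(u) = w\int_0^1(\dg(u+\theta w)-\dg(u))\d\theta$ and similarly for $h$ — into a finite sum of terms each containing a factor $\abs{\dg(u+s_1w)-\dg(u+s_2w)}^2$ (or a polarized version $\dg(u+s_1w)-\dg(u+s_2w)$ paired against another such difference, which is of the same type after Cauchy–Schwarz is \emph{not} applied — we keep it bilinear) divided by $\abs w^{p-2}$, with the remaining factors being values $\dg(u+s_iw)$ and one $\dh(u+s_Kw)$; here $\G\equiv1$. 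For $\Re[1]$, $\Re[3]$, $\Re[4]$ the denominators $\abs{\T\g}^{-p}$, $\abs{\T\g}^{-p-2}$ appear, but $\abs{\T\g}/w$ is bounded away from $0$ and $\infty$ by the bi-Lipschitz estimate \eqref{eq:bilip}, so $\abs{\T\g}^{-p} = \abs w^{-p}\,\G(\abs{\T\g/w})$ with $\G(t)=t^{-p}$ analytic on $[c,\infty)$; the remaining numerator manipulations are as for $\Re[2]$, again producing the quadratic-difference factor (for $\Re[1]$ this comes from the bracket $\tfrac1{\abs{\T\g}^p}-\tfrac1{\abs w^p}$, which one rewrites using $\abs{\T\g}^2 - w^2 = 2w\sp{\int_0^1(\dg(u+\theta w)-\dg(u))\d\theta, \T\g}+ \abs{\T\g - w\dg}^2$, the second summand being manifestly of the desired quadratic type; for $\Re[3]$ and $\Re[4]$ it comes from $\abs{\PP(\T\g)}^2 = \abs{\PP(\T\g - w\dg)}^2$ via \eqref{eq:proj}, which is itself $\abs w^2$ times a square of an integral of differences). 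In each case the factor $\dh$ enters linearly and can be written as $\dh(u+s_Kw)$ after integrating out the Taylor remainders, matching the statement.

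The main obstacle I anticipate is bookkeeping rather than conceptual: one must be careful that every occurrence of $h$ truly reduces to a single factor $\oast \dh(u+s_Kw)$ with $s_K\in\set{0,\theta_K}$, and not, say, a difference $\dh(u+w)-\dh(u)$ which would \emph{not} be of the claimed form. In $\Re[2]$ and $\Re[3]$, $h$ enters through $\T h - \sp{\T\g,\dg}\dh(u)$ and $\sp{\T\g,\T h}$; writing $\T h = w\int_0^1 \dh(u+\theta w)\d\theta$ exhibits $\dh$ at a single shifted point after introducing the auxiliary integration variable, and the coefficient $\sp{\T\g,\dg}$ is itself $w$ times something bounded — crucially one does \emph{not} need a difference of $\dh$'s because the required regularity gain has already been extracted from $\g$ alone. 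In $\Re[4]$, $h$ appears directly as $\sp{\dg(u),\dh(u)}+\sp{\dg(u+w),\dh(u+w)}$, two terms each already of the form $\dg(u+s_iw)\oast\dh(u+s_Kw)$ with $s_K\in\set0$ or effectively $s_K$ corresponding to the shift by $w$ (absorbed by relabelling, or handled by the obvious symmetry $w\mapsto -w$, $u \mapsto u+w$). One then collects finitely many such terms, checks that the prefactor in each is precisely $\gp(u,w)$ with $K$ the total number of integration variables introduced, the $s_i$ drawn from $\set{0,\theta_i}$, and $\G$ the relevant analytic function ($t\mapsto 1$, $t^{-p}$, $t^{-p-2}$, or a product thereof), and concludes.
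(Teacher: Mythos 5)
Your overall route is the paper's: the same four-term decomposition $\Re[](\g,h)=\Re[1]+\Re[2]+\Re[3]+\Re[4]$, the same use of the bi-Lipschitz bound to write $\abs{\T\g}^{-p}=\abs{w}^{-p}\,\G\br{\abs{\T\g/w}}$ with $\G$ analytic on $[c,\infty)$, and the same goal of isolating a factor quadratic in differences of $\dg$ while keeping $\dh$ undifferenced. The gap is that you never supply the mechanism that actually produces this quadratic factor, namely the identity $\sp{v_1,v_2}-1=-\tfrac12\abs{v_1-v_2}^2$ for unit vectors, which the paper applies in every one of the four terms. Concretely: in $\Re[1]$ the quadratic gain must come from the bracket $\abs{\T\g}^{-p}-\abs{w}^{-p}$, i.e.\ from $\abs{\T\g/w}^2-1$. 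Your decomposition of $\abs{\T\g}^2-w^2$ is wrong as stated (the cross term is $2w\sp{\T\g-w\dg(u),\dg(u)}$, not $2\sp{\T\g-w\dg(u),\T\g}$), and, more importantly, you only address the summand $\abs{\T\g-w\dg(u)}^2$ and leave the cross term untouched --- but that cross term is a priori only \emph{first} order in $\dg(u+\theta w)-\dg(u)$ and becomes quadratic only via $\sp{\dg(u+\theta w),\dg(u)}-1=-\tfrac12\abs{\dg(u+\theta w)-\dg(u)}^2$. The same applies to the factor $\sp{\T\g-w\dg(u),\dg(u)}$ that the expansion of $\Re[2]$ produces: you correctly observe that the regularity gain must be ``extracted from $\g$ alone'' so that $\dh$ never appears as a difference, but without the unit-vector identity that single factor is not second order in $\dg$, and the second-order behaviour one gets instead is of the forbidden mixed type (one difference of $\dg$ paired with one difference of $\dh$).

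A lesser issue: where you do exhibit quadratic factors ($\abs{\T\g-w\dg(u)}^2$ in $\Re[1]$, and $\abs{\PP\br{\T\g-w\dg(u)}}^2$ in $\Re[3]$, $\Re[4]$), these expand into polarized products $\sp{\dg(u+\theta_1w)-\dg(u),\dg(u+\theta_2w)-\dg(u)}$ of two \emph{different} first differences rather than the literal square $\abs{\dg(u+s_1w)-\dg(u+s_2w)}^2$ demanded by the statement. This variant would still suffice for the subsequent estimates in Lemma~\ref{lem:reg-int} (estimate the two factors separately and use Cauchy--Schwarz in $w$), but it does not prove the lemma as stated; the paper avoids it by writing $\abs{\PP\br{\T\g}}^2=\sp{\PP\br{\T\g},\T\g}$ and once more invoking the unit-vector identity to produce honest squares. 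Your anticipated difficulty --- the bookkeeping of $\dh(u+s_Kw)$ --- is indeed routine; the missing identity is the conceptual core of the proof.
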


\begin{proof}
 We begin with $\Re[1]$. Using $\PP=\mathbbm1-\g\otimes\g$
 where $\mathbbm1$ denotes the unit matrix, we obtain
 \begin{align*}
  &w^{-2}\sp{\PP\br{\T\g-w\dg},\T h-\sp{\T\g,\dg}\dh} \\
  &= \iiint_{[0,1]^{3}} \sp{(\mathbbm1-\dg\otimes\dg)\br{\dg(u+\theta_{1}w)-\dg},
     \dh(u+\theta_{3}w)-\sp{\dg(u+\theta_{2}w),\dg}\dh} \d\theta_{1}\d\theta_{2}\d\theta_{3}.
 \end{align*}
 As $\abs\dg=1$, we may add $\sp{\dg,\dg}$ before $\mathbbm1$ and
 $\dh(u+\theta_{3}w)$. Expanding the three differences,
 we obtain eight terms all of which have the form
 \[ \iiint_{[0,1]^{3}} \dg\oast\dg\oast\dg(u+s_{1}w)\oast\dg(u+s_{2}w)\oast\dg\oast\dh(u+s_{3}w)\d\theta_{1}\d\theta_{2}\d\theta_{3} \]
 where $s_{i}\in\set{0,\theta_{i}}$, $i=1,2,3$.
 For the remaining factor we set $\G(z):=\frac{z^{-p}-1}{z^{2}-1}$ and compute,
 using $\abs\dg\equiv1$ and $\sp{a,b}-1=-\frac12\abs{a-b}^{2}$
 for $\abs a=\abs b=1$,
 \begin{align*}
 &w^{2}\br{\frac1{\abs{\T\g}^p}-\frac1{\abs w^p}} \\
 &=\abs w^{{2-p}} \br{\abs{\frac{\T\g}w}^{-p}-1} \\
 &=\G\br{\abs{\frac{\T\g}w}} \frac{\abs{\frac{\T\g}w}^{2}-1}{\abs w^{p-2}} \\
 &=\G\br{\abs{\frac{\T\g}w}} \frac{\abs{\int_{0}^{1}\dg(u+\theta w)\d\theta}^{2}-1}{\abs w^{p-2}} \\
 &=-\tfrac12\G\br{\abs{\frac{\T\g}w}} \frac{\iint_{[0,1]^{2}}\abs{\dg(u+\theta_{1} w)-\dg(u+\theta_{2} w)}^{2}\d\theta_{1}\d\theta_{2}}{\abs w^{p-2}}. \\
 \end{align*}
 So $\Re[1]$ has the desired form. The nominator of $\Re[2]$ reads
 \begin{align*}
  & \sp{\PP\br{\T\g-w\dg},\T h-\sp{\T\g,\dg}\dh}-\sp{\T\g-w\dg,\T h-w\dh} \label{eq:R2}\\
  &= \sp{\T\g-w\dg,w\dh-\sp{\T\g,\dg}\dh} - \sp{\T\g-w\dg,\dg}\sp{\T h-\sp{\T\g,\dg}\dh,\dg} \nonumber\\
  &= \sp{\T\g-w\dg,\sp{w\dg-\T\g,\dg}\dh} - \sp{\T\g-w\dg,\dg}\sp{\T h-\sp{\T\g,\dg}\dh,\dg} \nonumber\\
  &= -\sp{\T\g-w\dg,\dg}\br{\sp{\T\g-w\dg,\dh} + \sp{\T h-\sp{\T\g,\dg}\dh,\dg}} \nonumber\\
  &= -w^2\int_0^1\br{\sp{\dg(u+\theta_1 w),\dg(u)}-1}\d\theta_1 \cdot \nonumber\\
  &\qquad\qquad{}\cdot\iint_{[0,1]^{2}}\br{\sp{\dg(u+\theta_2 w)-\dg,\dh} + \sp{\dh(u+\theta_3w)-\sp{\dg(u+\theta_2w),\dg}\dh,\dg}}\d\theta_2\d\theta_{3} \nonumber\\
  &= \tfrac12w^2\int_0^1\abs{\dg(u+\theta_1 w)-\dg(u)}^2\d\theta_1 \cdot \iint_{[0,1]^{2}}\br{\cdots}\d\theta_2\d\theta_{3}
 \end{align*}
 where $\br\cdots$ is a sum of terms of type
 $\dg(u+s_{2}w)\oast\dg\oast\dg\oast\dh(u+s_{3}w)$
 with $s_{i}\in\set{0,\theta_{i}}$, $i=2,3$.
 For $\Re[3]$ we set $\G(z):=\abs z^{-p-2}$ and obtain
 \begin{align*}
 &\frac{\abs{\PP\br{\T\g}}^2\sp{\T\g,\T h}}{\abs{\T\g}^{p+2}}
 =\frac{\sp{\PP\br{\T\g},\T\g}}{\abs{\T\g}^{p+2}}\sp{\T\g,\T h} \\
 &=\G\br{\abs{\frac{\T\g}{w}}}\iiiint_{[0,1]^{4}}\frac{\sp{\dg(u+\theta_{1}w)-\sp{\dg(u+\theta_{1}w),\dg}\dg,{\dg(u+\theta_{2}w)}}}{\abs w^{p-2}}\cdot{}\\
 &\qquad\qquad\qquad\qquad\qquad\qquad{}\cdot\sp{\dg(u+\theta_{3}w),\dh(u+\theta_{4}w)}\d\theta_{1}\d\theta_{2}\d\theta_{3}\d\theta_{4}.
 \end{align*}
 The nominator reads
 \begin{align*}
 &\sp{\dg(u+\theta_{1}w)-\sp{\dg(u+\theta_{1}w),\dg}\dg,{\dg(u+\theta_{2}w)}} \\
 &=\sp{\dg(u+\theta_{1}w),\dg(u+\theta_{2}w)}-1
   +1-\sp{\dg(u+\theta_{1}w),\dg}
   +\sp{\dg(u+\theta_{1}w),\dg}\br{1-\sp{\dg(u+\theta_{2}w),\dg}} \\
 &=-\tfrac12\abs{\dg(u+\theta_{1}w)-\dg(u+\theta_{2}w)}^{2}
   +\tfrac12\abs{\dg(u+\theta_{1}w)-\dg}^{2}
   +\tfrac12\sp{\dg(u+\theta_{1}w),\dg}\abs{\dg(u+\theta_{2}w)-\dg}^{2}.
 \end{align*}
 Finally, $\Re[4]$ is treated similarly to $\Re[3]$.
\end{proof}

Our next task is to show that
 $\Re[]$
is in fact a lower-order term. More precisely, we have

\begin{proposition}[Regularity of the remainder term]\label{prop:reg-rem}
 If $\g\in\Wia[(p-1)/2+\s,2]\rzd$ for some $\s\ge0$ then
 $\Re[](\g,\cdot)\in\br{\W[3/2-\s+\eps,2]}^{*}$ for any $\eps>0$.
\end{proposition}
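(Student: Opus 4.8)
By Lemma~\ref{lem:struc-rem} the functional $\Re[](\g,\cdot)$ is a finite sum of terms
\[
 T(\g,h) := \Wint \idotsint_{[0,1]^{K}} \gp(u,w)\oast\dh(u+s_{K}w)\,\d\theta_{1}\cdots\d\theta_{K}\,\d w\,\d u ,
\]
so it suffices to control a single $T$. Since $T(\g,\cdot)$ is a priori defined on the dense subspace $W^{(p-1)/2,2}(\R/\Z,\R^{n})$, it is enough to prove, for $h$ in this subspace and $\eps>0$ small (the statement for larger $\eps$ then follows, $W^{3/2-\s+\eps,2}$ getting smaller as $\eps$ grows), the a priori bound $\abs{T(\g,h)}\le C_{\g}\norm h_{W^{3/2-\s+\eps,2}}$, with $C_{\g}$ depending only on $p$, $\s$, the bi-Lipschitz constant from~\eqref{eq:bilip} (Proposition~\ref{prop:bilip}) and $\norm\g_{W^{(p-1)/2+\s,2}}$. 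As $h$ enters $T$ only through $\dh$ at the \emph{single} point $u+s_{K}w$, the substitution $v:=u+s_{K}w$ (the identity when $s_{K}=0$) rewrites
\[
 T(\g,h) = \int_{\R/\Z} B(v)\oast\dh(v)\,\d v, \qquad
 B(v) := \Wint \idotsint \gp(v-s_{K}w,w)\,\d\theta\,\d w ,
\]
where $\gp(v-s_{K}w,w)$ has the very same shape as in Lemma~\ref{lem:struc-rem}, now with every $\dg$-argument translated by $v$: it equals an analytic function of $\tfrac1w\br{\g(v+(1-s_{K})w)-\g(v-s_{K}w)}=\int_{-s_{K}}^{1-s_{K}}\dg(v+\tau w)\,\d\tau$ — whose modulus lies in the fixed compact set $[\tfrac1{C_{\g}},1]$ by~\eqref{eq:bilip} — times $\abs w^{2-p}\abs{\dg(v+a_{1}w)-\dg(v+a_{2}w)}^{2}$ times a product of translates $\dg(v+a_{i}w)$. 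Everything thus reduces to showing
\[
 B \in \br{W^{1/2-\s+\eps,2}(\R/\Z,\R^{n})}^{*} = W^{-1/2+\s-\eps,2}(\R/\Z,\R^{n}) ,
\]
for then $\abs{T(\g,h)}\le\norm B_{W^{-1/2+\s-\eps,2}}\norm{\dh}_{W^{1/2-\s+\eps,2}}\le C_{\g}\norm h_{W^{3/2-\s+\eps,2}}$.

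\textbf{Estimating $B$.} Write $t:=\tfrac12-\s+\eps$. First, by Minkowski's integral inequality, $\norm B_{W^{-t,2}}\le\Wint\idotsint\norm{\gp(\cdot-s_{K}w,w)}_{W^{-t,2}}\,\d\theta\,\d w$. Secondly, for fixed $w$ the analytic factor (via the chain rule of Appendix~\ref{sect:fractional} together with~\eqref{eq:bilip}) and each translate $\dg(\cdot+a_{i}w)$ are bounded in $W^{(p-3)/2+\s,2}\cap L^{\infty}$ \emph{uniformly in $w$}; since $(p-3)/2+\s>\tfrac12$ on~\eqref{eq:regularity-range}, each of them is a pointwise multiplier on $W^{-t,2}$ with $w$-uniform operator norm (Appendix~\ref{sect:fractional}), so
\[
 \norm{\gp(\cdot-s_{K}w,w)}_{W^{-t,2}} \le C\,\abs w^{2-p}\,\norm{\abs{\dg(\cdot+a_{1}w)-\dg(\cdot+a_{2}w)}^{2}}_{W^{-t,2}} .
\]
Thirdly, with $\Phi_{w}:=\dg(\cdot+a_{1}w)-\dg(\cdot+a_{2}w)\in W^{\s,2}$ we invoke the fractional multiplication theorem $W^{\s,2}\cdot W^{\s,2}\hookrightarrow W^{-t,2}$, whose admissibility conditions ($\s+\s\ge0$, $\s+t\ge0$, $2\s+t=\s+\tfrac12+\eps>\tfrac12$, and when $t<0$ in addition $-t\le\s$) hold for every $\s\ge0$, $\eps>0$; this yields $\norm{\Phi_{w}\oast\Phi_{w}}_{W^{-t,2}}\le C\norm{\Phi_{w}}_{W^{\s,2}}^{2}$. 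Combining the three steps,
\[
 \norm B_{W^{-t,2}}\le C\int_{-1/2}^{1/2}\abs w^{2-p}\,\norm{\dg(\cdot+a_{1}w)-\dg(\cdot+a_{2}w)}_{W^{\s,2}}^{2}\,\d w ,
\]
and the right-hand side is bounded by a constant times $\norm\g_{W^{(p-1)/2+\s,2}}^{2}<\infty$: for $a_{1}=a_{2}$ the integrand vanishes, and otherwise the rescaling $w\mapsto(a_{1}-a_{2})w$ together with the characterization of $W^{(p-3)/2+\s,2}$ through its $W^{\s,2}$-valued (iterated, if $(p-3)/2+\s\ge1$) differences applies, since $\tfrac{p-3}2\in(0,1)$ for $p\in(4,5)$. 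This proves $B\in W^{-t,2}$, hence the proposition.

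\textbf{The main difficulty.} The delicate point is that one must \emph{not} estimate the $\dg$-differences by their Hölder norm: the embedding $W^{(p-3)/2+\s,2}\hookrightarrow C^{0,(p-4)/2+\s}$ loses half a derivative, and the crude bound $\abs{\dg(\cdot+a_{1}w)-\dg(\cdot+a_{2}w)}\lesssim\abs w^{(p-4)/2+\s}$ would make the $w$-integral converge only for $\s>\tfrac12$. Instead, both copies of $\Phi_{w}$ in $\abs{\dg(\cdot+a_{1}w)-\dg(\cdot+a_{2}w)}^{2}$ must be kept inside the $W^{\s,2}$-seminorm and consumed through the multiplication theorem as above; the parameter $\eps>0$ enters only to keep that multiplication off its borderline case $2\s+t=\tfrac12$. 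The bookkeeping is tight but closes on the whole range $p\in(4,5)$, $\s\ge0$ precisely because $\tfrac{p-3}2<1$ (so first differences suffice) and $\tfrac{p-3}2+\s>\tfrac12$ (so the remaining factors act as multipliers); for larger $\s$, where $\g\in C^{2}$ or smoother, the same computation goes through a fortiori.
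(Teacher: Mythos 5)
Your argument is correct and shares the paper's overall strategy --- isolate the single factor $h'(u+s_{K}w)$ produced by Lemma~\ref{lem:struc-rem}, put a dual norm on it, and reduce the control of the $\gamma$-part, via the chain rule and a product/multiplier step, to the bound $\int_{-1/2}^{1/2}\abs w^{2-p}\norm{\gamma'(\cdot+s_{1}w)-\gamma'(\cdot+s_{2}w)}_{W^{\sigma,2}}^{2}\d w\le C\norm\gamma_{W^{(p-1)/2+\sigma,2}}^{2}$ --- but your functional-analytic implementation is genuinely different. The paper integrates by parts with the fractional Laplacian, pairing $(-\Delta)^{\tilde\sigma/2}g^{(p)}(\cdot,w)$ against $(-\Delta)^{-\tilde\sigma/2}h'\in L^{\infty}$ with $\tilde\sigma=\sigma-\eps/2$, and controls $g^{(p)}(\cdot,w)$ in the $L^{1}$-based space $W^{\tilde\sigma,r}$, $r>1$ close to $1$, using Lemma~\ref{lem:product} with a careful choice of exponents $q_{i}$; you instead shift $u\mapsto u+s_{K}w$ and pair $B\in W^{-t,2}$ against $h'\in W^{t,2}$, $t=\tfrac12-\sigma+\eps$, controlling $B$ through multiplication theorems on negative-order $L^{2}$-Sobolev spaces. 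Both routes lose the same $\eps$ and rest on the two points you rightly single out as the crux: the difference of tangents must stay inside an $L^{2}$-based seminorm rather than be estimated in H\"older norm, and the remaining factors act as multipliers because $\tfrac{p-3}2+\sigma>\tfrac12$. What your route buys is that it avoids the fractional integration by parts and the $L^{1}$-based spaces; what it costs is that the tools you attribute to Appendix~\ref{sect:fractional} are not actually there --- Lemma~\ref{lem:product} covers only factors of equal positive smoothness, while you need $W^{\sigma,2}\cdot W^{\sigma,2}\hookrightarrow W^{\sigma-1/2-\eps,2}$ and the fact that $W^{(p-3)/2+\sigma,2}\cap L^{\infty}$ multiplies $W^{-t,2}$. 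These are standard (and available in Runst--Sickel, which the appendix already cites), so this is a question of attribution rather than a gap, but they should be stated with the admissibility conditions you list. A small simplification: no iterated differences are needed in your last step, since the exponent governing the $w$-integration is $\tfrac{p-3}2\in(\tfrac12,1)$ independently of $\sigma$, and the Fourier-side identity $\int_{-1/2}^{1/2}\abs w^{-1-(p-3)}\abs{e^{2\pi ikw}-1}^{2}\d w\approx\abs k^{p-3}$ closes the argument for every $\sigma\ge0$.
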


This statement together with Proposition~\ref{prop:Q} immediately leads to
the proof of the regularity theorem which is deferred to the end of
this section.

To prove Proposition~\ref{prop:reg-rem}, we first note that,
by partial integration, the terms of $\Re(\g,h)$ may be transformed into
\begin{align*}
 &\idotsint_{[0,1]^{K}}\int_{-1/2}^{1/2}\int_{\R/\Z}
  \br{(-\Delta)^{\tilde\s/2}\gp(\cdot,w)}(u) \oast\br{(-\Delta)^{-\tilde\s/2}\dh}(u+s_{K}w)
  \d u\d w \d\theta_{1}\cdots\d\theta_{K} \\
 &\le \idotsint_{[0,1]^{K}}
 \int_{-1/2}^{1/2}\norm{\gp (\cdot, w)}_{\W[\tilde\s,1]\rzd}\d w
 \d\theta_{1}\cdots\d\theta_{K-1} \norm{(-\Delta)^{-\tilde\s/2}\dh}_{L^{\infty}\rzd} \\
 &\le \idotsint_{[0,1]^{K}}
 \int_{-1/2}^{1/2}\norm{\gp (\cdot, w)}_{\W[\tilde\s,1]\rzd}\d w
 \d\theta_{1}\cdots\d\theta_{K-1}\norm{h}_{\W[3/2+\eps/2-\tilde\s,2]\rzd},
\end{align*}
 where $\tilde\s\in\R$, $\eps>0$ can be chosen arbitrarily, and $(-\Delta)^{\tilde\s/2}$ denotes the fractional
 Laplacian.
 We let $\tilde\s:=0$ if $\s=0$ and $\tilde\s:=\s-\frac\eps2$ otherwise.
 Now the claim directly follows from the succeeding auxiliary result.

\begin{lemma}[Regularity of the remainder integrand]\label{lem:reg-int}
 Let $\g\in\Wia[(p-1)/2+\s,2]$.
 \begin{itemize}
 \item If $\s=0$ then $\gp\in L^{1}(\R/\Z\times(-\frac12,\frac12),\R^{n})$ and
 \item if $\s>0$ then $(w \mapsto \gp(\cdot,w))\in L^{1}((-\frac12,\frac12),\W[\tilde\s,1](\R/\Z, \R^{n}))$ for any $\tilde\s<\s$.
 \end{itemize}
 The respective norms are bounded independently of $s_{1},\dots,s_{K}$.
\end{lemma}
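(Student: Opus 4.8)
The plan is to strip $\gp$ of its harmless, bounded factors and reduce everything to controlling the single quotient $\abs{\dg(\cdot+s_1w)-\dg(\cdot+s_2w)}^2/\abs w^{p-2}$, which carries all of the singular behaviour in $w$. Write $g_w:=\dg(\cdot+s_1w)-\dg(\cdot+s_2w)$ and $\vartheta:=\tfrac{p-3}2+\sigma$, so that the hypothesis $\g\in\Wia[(p-1)/2+\sigma,2]$ means $\dg\in\W[\vartheta,2]\rzd$ with $\abs\dg\equiv1$. By Proposition~\ref{prop:bilip} the argument $\abs{\T\g/w}=\abs{\int_0^1\dg(u+\theta w)\d\theta}$ of $\G$ stays in the compact interval $[C_\g^{-1},1]$, on which $\G$ --- being analytic --- is bounded and Lipschitz; since $u\mapsto\int_0^1\dg(u+\theta w)\d\theta$ lies in $C^{0,\alpha}\rzd$ with $\alpha:=\vartheta-\tfrac12=\tfrac{p-4}2+\sigma>0$ (Morrey, \eqref{eq:Wembed}), and since $\abs\dg\equiv1$ makes $C^{0,\alpha}$ into an algebra controlling $\bigoast_{i=3}^{K-1}\dg(u+s_iw)$, I would first show
\[ \tilde A(\cdot,w):=\G\br{\abs{\tfrac{\T\g}w}}\bigoast_{i=3}^{K-1}\dg(u+s_iw)\in C^{0,\alpha}\rzd, \qquad \norm{\tilde A(\cdot,w)}_{C^{0,\alpha}}\le C_\g \]
uniformly in $w$ and in the $s_i$, so that $\gp(\cdot,w)=\tilde A(\cdot,w)\,\abs w^{-(p-2)}\abs{g_w}^2$.

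For $\sigma=0$ this already settles the claim: $\abs\gp\le C_\g\abs w^{-(p-2)}\abs{g_w}^2$, and the substitution $\tau=(s_1-s_2)w$ (the integrand vanishes when $s_1=s_2$, and $\abs{s_1-s_2}\le1$ otherwise) gives $\int_{\R/\Z}\int_{-1/2}^{1/2}\abs\gp\le C_\g\abs{s_1-s_2}^{p-3}\seminorm{\dg}_{\W[(p-3)/2,2]}^2\le C_\g\seminorm{\dg}_{\W[(p-3)/2,2]}^2<\infty$, using $1+2\cdot\tfrac{p-3}2=p-2$; this bound is uniform in the $s_i$.

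For $\sigma>0$ fix $\tilde\sigma\in(0,\sigma)$ and choose $\delta\in(0,\sigma-\tilde\sigma)$. The argument then rests on two ingredients from the fractional calculus of Appendix~\ref{sect:fractional}. Since $\tilde\sigma<\sigma<\alpha$, there is the product estimate $\norm{fg}_{\W[\tilde\sigma,1]}\le C\norm f_{C^{0,\alpha}}\norm g_{\W[\tilde\sigma,1]}$ (split the Gagliardo difference of $fg$, bound one term using $\abs{f(u+v)-f(u)}\le\norm f_{C^{0,\alpha}}\abs v^\alpha$ together with $\int\abs v^{\alpha-1-\tilde\sigma}\d v<\infty$, the other using $\norm f_{L^\infty}$). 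There is also a bilinear estimate with a small regularity loss, $\norm{\abs u^2}_{\W[\tilde\sigma,1]}\le C_\delta\norm u_{\W[\tilde\sigma+\delta,2]}^2$ (write $\abs{u(u+v)}^2-\abs{u(u)}^2=\sp{u(u+v)-u(u),u(u+v)+u(u)}$, apply Cauchy--Schwarz in $u$, then Cauchy--Schwarz in $v$, the surplus $\delta$ making $\int\abs v^{2\delta-1}\d v$ converge). The remaining input is the $w$-decay of the Sobolev norms of $g_w$: from the Fourier coefficients $\widehat{(g_w)}_k=(e^{2\pi iks_1w}-e^{2\pi iks_2w})\widehat{(\dg)}_k$ and $\abs{e^{ix}-e^{iy}}\le\min(2,\abs{x-y})$, splitting the frequencies at $\abs k\sim\abs w^{-1}$ gives $\norm{g_w}_{\W[\varrho,2]}\le C\abs w^{\min(1,\vartheta-\varrho)}\norm\dg_{\W[\vartheta,2]}$ for every $\varrho\in[0,\vartheta)$, uniformly in $s_1,s_2$. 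Inserting $\varrho=\tilde\sigma+\delta$ (which is $<\sigma<\vartheta$) into the bilinear estimate yields $\norm{\abs{g_w}^2}_{\W[\tilde\sigma,1]}\le C\abs w^{e}\norm\dg_{\W[\vartheta,2]}^2$ with $e:=2\min(1,\vartheta-\tilde\sigma-\delta)$, and a one-line case distinction (using $p<5$, $\sigma>0$ and $\delta<\sigma-\tilde\sigma$) shows $e>p-3$. Combining this with the product estimate and the uniform bound on $\norm{\tilde A(\cdot,w)}_{C^{0,\alpha}}$,
\[ \int_{-1/2}^{1/2}\norm{\gp(\cdot,w)}_{\W[\tilde\sigma,1]}\d w\le C_\g\int_{-1/2}^{1/2}\frac{\norm{\abs{g_w}^2}_{\W[\tilde\sigma,1]}}{\abs w^{p-2}}\d w\le C_\g\norm\dg_{\W[\vartheta,2]}^2\int_{-1/2}^{1/2}\abs w^{e-(p-2)}\d w<\infty, \]
with all constants independent of $s_1,\dots,s_K$ and of $\theta_1,\dots,\theta_K$.

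I expect the main obstacle to be precisely this quantitative book-keeping in the case $\sigma>0$. Because $\dg$ is a priori only Hölder continuous with the small exponent $\alpha=\tfrac{p-4}2+\sigma$ (which degenerates as $p\searrow4$, $\sigma\searrow0$), estimating $\abs{g_w}^2/\abs w^{p-2}$ through any $L^\infty$- or $C^{0,\alpha}$-bound on $g_w$ loses too much to beat the singularity $\abs w^{-(p-2)}$; one must instead keep \emph{both} copies of the difference $\dg(\cdot+s_1w)-\dg(\cdot+s_2w)$ inside $L^2$-based norms, each decaying like $\abs w^{\vartheta-\varrho}$, so that the combined decay is essentially $\abs w^{p-3+2\sigma}$. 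The budget closes exactly because one may sacrifice a sliver of regularity --- $\tilde\sigma<\sigma$, equivalently the parameter $\delta$ --- which is the structural reason the statement (and Proposition~\ref{prop:reg-rem}) is phrased with the strict inequality rather than $\tilde\sigma=\sigma$.
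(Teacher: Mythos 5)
Your proof is correct, and it takes a genuinely different technical route from the paper's. The paper controls $\norm{\gp(\cdot,w)}_{\W[\tilde\s,r]}$ for some $r>1$ by invoking the fractional product rule (Lemma~\ref{lem:product}) with a tuned family of exponents $q_1,\dots,q_{K-1}$ and the chain rule (Lemma~\ref{lem:chain}) for the factor $\G$, together with Sobolev embeddings; this yields $\norm{\gp(\cdot,w)}_{\W[\tilde\s,r]}\le C\abs w^{-(p-2)}\norm{\dg(\cdot+s_1w)-\dg(\cdot+s_2w)}_{\W[\s,2]}^{2}$, and the final integration in $w$ is then simply asserted --- note that it genuinely requires a Fubini/Fourier argument, since the pointwise decay $\norm{\dg(\cdot+s_1w)-\dg(\cdot+s_2w)}_{\W[\s,2]}\le C\abs w^{(p-3)/2}$ is exactly borderline against the singularity $\abs w^{-(p-2)}$. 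You instead place all the regular factors (the composition with $\G$ and the $\bigoast$-product of tangents) into $C^{0,\alpha}$ via Morrey's embedding, multiply them onto $\W[\tilde\s,1]$ by an elementary H\"older-type Leibniz estimate, and --- this is the key divergence --- spend the gap $\tilde\s<\s$ on the squared-difference factor itself, via the bilinear estimate $\norm{\abs{g_w}^2}_{\W[\tilde\s,1]}\le C\norm{g_w}_{\W[\tilde\s+\delta,2]}^{2}$ and the Fourier-side decay $\norm{g_w}_{\W[\varrho,2]}\le C\abs w^{\min(1,\vth-\varrho)}\norm\dg_{\W[\vth,2]}$. This produces the strict surplus $e>p-3$ (your two cases, using $p<5$ and $\delta<\s-\tilde\s$, check out), so the $w$-integral closes by a pointwise bound with no Fubini needed. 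What your version buys is that the delicate last step is fully explicit and the appendix machinery is replaced by hand-checkable estimates; what the paper's version buys is brevity and a marginally stronger conclusion ($\W[\tilde\s,r]$ with $r>1$ rather than $r=1$), which is not needed for Proposition~\ref{prop:reg-rem}. Your $\s=0$ case (substitution $\tau=(s_1-s_2)w$, exponent bookkeeping $1+2\cdot\frac{p-3}{2}=p-2$) agrees with the paper's and even records the square on the seminorm that the paper's displayed bound omits.
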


\begin{proof}
 Note that, by~\eqref{eq:bilip}, the argument of $\G$ is compact and bounded away from zero.
 Using arc-length parametrization, we immediately obtain
 \[ \abs{\gp(u,w)} \le C\frac{\abs{\dg(u+s_{1}w)-\dg(u+s_{2}w)}^{2}}{\abs w^{p-2}} \]
 which gives $\Wint\abs{\gp(u,w)}\d w\d u
 \le C\seminorm\dg_{\W[(p-3)/2,2]}$. To prove the second claim,
 we will derive a suitable bound on $\norm{\gp(\cdot,w)}_{\W[\tilde\s,r]}$ for some $r>1$.
 We choose $q_{1},\dots,q_{K-1}$, which will be determined more precisely later on,
 such that
 \begin{equation*}
   \sum_{i=1}^{K-1} \frac 1 {q_i} = \frac 1 r.
 \end{equation*}
 Lemma~\ref{lem:product} then leads to 
 \[ \norm{\gp(\cdot,w)}_{\W[\tilde\s,r]} \le C\norm{\G\br{\abs{\frac{\T\g}w}}}_{\W[\tilde\s,q_{1}]}\frac{\norm{\dg(\cdot+s_{1}w)-\dg(\cdot+s_{2}w)}_{\W[\tilde\s,2q_{2}]}^{2}}{\abs w^{p-2}}{\prod_{i=3}^{K-1}\norm\dg_{\W[\tilde\s,q_{i}]}}. \]
 
 For the second factor, we now choose $q_{2}>r$ so small that
 $\W[\s,2]$ embeds into $\W[\tilde\s,2q_{2}]$.
 To this end, we set $\frac1r:=1-(\s-\tilde\s)$ and $\frac1{q_{2}}:=1-2(\s-\tilde\s)$.
 and $q_{i}:=\frac{K-2}{\s-\tilde\s}$ for $i=1,3,4, \ldots, K-1$.
 
 Then for the first factor we apply Lemma~\ref{lem:chain}.
 Recall that $\G$ is analytic and its argument is bounded below away from zero
 and above by~$1$.
 We infer
 \begin{equation*}
  \norm{\G\br{\abs{\frac{\g(\cdot+w)-\g(\cdot)}w}}}_{\W[\tilde\s,q_{1}]}
  \leq C \norm\dg_{\W[ \tilde \sigma ,q_{1}]}.
 \end{equation*}
 The Sobolev embedding gives
 \[ \norm{\dg}_{\W[ \tilde \sigma ,q_{i}]} \le C \norm\g_{\W[(p-1)/2+\s,2]} \le  C \qquad \text{for } i=1,3,4,\dots,K-1. \]
 Together this leads to
 \[ \norm{\gp(\cdot,w)}_{\W[\tilde\s,r]} \le C\frac{\norm{\dg(\cdot+s_{1}w)-\dg(\cdot+s_{2}w)}_{\W[\s,2]}^{2}}{\abs w^{p-2}} \]
 and finally
 \[ \int_{-1/2}^{1/2} \norm{\gp(\cdot,w)}_{\W[\tilde\s,r]} \d w
    \le C\norm\g_{\W[(p-1)/2+\s,2]}^{2}. \]
\end{proof}

\begin{proof}[Theorem~\ref{thm:smooth}]
 We arrive at the Euler-Lagrange Equation
 \begin{equation}\label{eq:elg}
  \delta\E(\g,h)+\lambda\sp{\dg,\dh}_{L^2} = 0
 \end{equation}
 for any $h\in C^\infty(\R/\Z)$
 where $\lambda\in\R$ is a Lagrange parameter stemming from the side condition (fixed length).
 Using~\eqref{eq:Re} this reads
 \begin{equation}\label{eq:elg2}
  2\Q(\g,h) + \lambda\sp{\dg,\dh}_{L^2} + \Re[](\g,h) = 0.
 \end{equation}
 Since first variation of the length functional satisfies
 \begin{equation*}
  \sp{\dg,\dh}_{L^2} = \sum_{k\in \mathbb Z} |2\pi k|^2 \sp{\hat\g_{k},\hat h_{k}}_{\C^{d}},
 \end{equation*}
 we get using  Proposition~\ref{prop:Q} that there is a $c>0$ such that
 \begin{equation}\label{eq:LinearTerm}
 2\Q(\g,h) + \lambda\sp{\dg,\dh}_{L^2} = \sum_{k\in\Z} \tilde\rho_k\sp{\hat \g_k,\hat h_k}_{\C^d}
 \end{equation}
 where
 \[ \tilde\rho_k = c\abs k^{p-1} + o\br{\abs k^{p-1}}
    \qquad\text{as }\abs k\nearrow\infty. \]
 Assuming that $\g\in\Wia[(p-1)/2+\s,2]$ for some $\s\ge0$,
 we infer
 \[ 2\Q(\g,\cdot) + \lambda\sp{\dg,\cdot'}_{L^2} \in \br{\W[3/2-\s+\eps,2]}^{*} \]
 from applying Proposition~\ref{prop:reg-rem} to~\eqref{eq:elg2}.
 Equation~\eqref{eq:LinearTerm} implies
 \[ \br{\rho_{k}\abs k^{-3/2+\sigma-\eps}\hat\g_{k}}_{k\in\Z} \in \ell^{2}. \]
 Recalling that $\rho_{k}\abs k^{{-p+1}}$ converges to a positive constant as $\abs k\nearrow\infty$, we are led to
 \[ \g\in\W[\textstyle\frac{p-1}2+\s+\frac{p-4}2-\eps]. \]
 Choosing $\eps:=\frac{p-4}4>0$, we gain a positive amount of regularity
 that does not depend on~$\s$. So by a simple interation we get $\g \in W^{s,2}$
  for all $s \geq 0$.
 \end{proof}

\begin{appendix}
 \section{Product and chain rule}\label{sect:fractional}

As in~\cite{blatt-reiter2}, we make use of the following results which
we briefly state for the readers' convenience.

\begin{lemma}[Product rule]\label{lem:product}
  Let $q_1,\dots,q_{k} \in (1,\infty)$ with $\sum_{i=1}^{k}\frac1{q_{k}}=\frac1r\in(1,\infty)$ and 
  $s > 0$.
  Then, for $f_{i}\in\W[s,q_{i}]\rzd$, $i=1,\dots,k$,
 \begin{equation*}
  \norm{\prod_{i=1}^{k} f_{i}}_{\W[s,r]} \leq C_{k,s} \prod_{i=1}^{k}\norm{f_{i}}_{\W[s,q_{i}]}.
 \end{equation*}
\end{lemma}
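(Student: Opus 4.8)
The plan is to reduce to the bilinear case $k=2$ with $s\in(0,1)$, to dispose of the ``diagonal'' contributions by a weighted H\"older inequality, and to treat the remaining ``low--high'' piece by a dyadic/paraproduct argument (or, alternatively, by real interpolation against the endpoints $L^{q}$ and $W^{1,q}$). The reduction to $k=2$ is by induction: setting $\tfrac1{r_{j}}:=\sum_{i\le j}\tfrac1{q_{i}}$, the claim follows from the bilinear statement applied to the pairs $\bigl(\prod_{i\le j}f_{i},f_{j+1}\bigr)\in\W[s,r_{j}]\times\W[s,q_{j+1}]$. The reduction to $s\in(0,1)$ is routine: for $s\ge1$ one expands the derivatives of a product by the classical Leibniz rule, distributes them together with the integrability exponents using H\"older's inequality and the one-dimensional embeddings $W^{m,q}(\R/\Z)\hookrightarrow W^{m-1,\infty}(\R/\Z)$, and applies the non-integer case to the highest-order factors.

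So let $k=2$ and $s\in(0,1)$. Since $\norm{f_{1}f_{2}}_{\W[s,r]}=\norm{f_{1}f_{2}}_{L^{r}}+\seminorm{f_{1}f_{2}}_{\W[s,r]}$ and the first summand is immediate from H\"older, the whole issue is the Gagliardo seminorm. I would start from the Leibniz identity
\begin{align*}
 (f_{1}f_{2})(x+w)-(f_{1}f_{2})(x)
 &=\bigl(f_{1}(x+w)-f_{1}(x)\bigr)\bigl(f_{2}(x+w)-f_{2}(x)\bigr)\\
 &\quad+\bigl(f_{1}(x+w)-f_{1}(x)\bigr)f_{2}(x)+f_{1}(x)\bigl(f_{2}(x+w)-f_{2}(x)\bigr),
\end{align*}
then split $\seminorm{f_{1}f_{2}}_{\W[s,r]}^{r}$ into the corresponding three integrals and distribute the singular weight by the elementary identity $1+rs=(1+q_{1}s)\tfrac{r}{q_{1}}+(1+q_{2}s)\tfrac{r}{q_{2}}-rs$, which leaves a bounded spare factor $\abs w^{rs}$. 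For the integral carrying a difference on \emph{both} factors this closes at once: H\"older in $(x,w)$ with the conjugate exponents $q_{1}/r$ and $q_{2}/r$ (note $\tfrac{r}{q_{1}}+\tfrac{r}{q_{2}}=1$) bounds it by $C\seminorm{f_{1}}_{\W[s,q_{1}]}^{r}\seminorm{f_{2}}_{\W[s,q_{2}]}^{r}$. In the integral stemming from $\bigl(f_{1}(x+w)-f_{1}(x)\bigr)f_{2}(x)$ I would first replace $f_{2}(x)$ by $A_{\abs w}f_{2}(x)+\bigl(f_{2}(x)-A_{\abs w}f_{2}(x)\bigr)$, where $A_{t}f_{2}(x):=t^{-1}\!\int_{0}^{t}f_{2}(x+\sigma)\d\sigma$ is the average of $f_{2}$ over a window of length $t$; by the standard properties of the $L^{q_{2}}$-modulus of continuity one has $\int_{-1/2}^{1/2}\abs w^{-1-q_{2}s}\norm{f_{2}-A_{\abs w}f_{2}}_{L^{q_{2}}}^{q_{2}}\d w\le C\seminorm{f_{2}}_{\W[s,q_{2}]}^{q_{2}}$, so the oscillation part is again controlled by the two seminorms through the same weight-splitting and H\"older, and the third Leibniz term is symmetric.

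The genuine obstacle is the remaining ``low--high'' piece $\bigl(f_{1}(x+w)-f_{1}(x)\bigr)A_{\abs w}f_{2}(x)$. A crude estimate that pulls $\norm{f_{2}}_{L^{q_{2}}}$ out reduces it to $\int_{-1/2}^{1/2}\abs w^{-1-rs}\norm{f_{1}(\cdot+w)-f_{1}}_{L^{q_{1}}}^{r}\d w$, which for $r<q_{1}$ need \emph{not} be finite for a mere $\W[s,q_{1}]$-function; one therefore has to use the smoothness of \emph{both} factors. I would handle it by localizing in dyadic shells $\abs w\sim2^{-j}$ and exploiting that $A_{2^{-j}}f_{2}$ is essentially frequency-localized below scale $2^{j}$ while $f_{1}(\cdot+w)-f_{1}(\cdot)$ carries the scale $2^{j}$, so that a Littlewood--Paley/paraproduct summation over $j$ recovers the factor $\seminorm{f_{2}}_{\W[s,q_{2}]}$ in place of $\norm{f_{2}}_{L^{q_{2}}}$. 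Organising this telescoping is the step I expect to cost the most technical work.

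Alternatively one can bypass the seminorm bookkeeping by real interpolation. Multiplication is bounded $L^{q_{1}}\times L^{q_{2}}\to L^{r}$ by H\"older and $W^{1,q_{1}}\times W^{1,q_{2}}\to W^{1,r}$ by the classical product rule $(f_{1}f_{2})'=f_{1}'f_{2}+f_{1}f_{2}'$ together with $\norm{f_{i}}_{L^{q_{i}}}\le\norm{f_{i}}_{W^{1,q_{i}}}$ --- so that \emph{no} $L^{\infty}$-bound enters --- and $\W[s,q]\rzd=\bigl(L^{q}\rzd,W^{1,q}\rzd\bigr)_{s,q}$. The claim then follows from the bilinear interpolation theorem; the only delicate point is that the summability index of the target interpolation space must come out equal to $r$ rather than something larger, which is the analogue of the obstacle above and is where, in practice, one invokes the known multiplication (Kato--Ponce-type) theorems for Besov--Slobodecki{\u\i} spaces used in~\cite{blatt-reiter2}.
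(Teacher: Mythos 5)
The paper itself offers no proof of this lemma---it is simply quoted from Runst and Sickel---so there is no in-paper argument to measure your attempt against; the following concerns your attempt on its own terms. Your reduction to $k=2$ and $s\in(0,1)$, the weighted H\"older treatment of the term carrying a difference on \emph{both} factors, and the modulus-of-continuity bound for the oscillation part are all sound. You have also put your finger on exactly the right obstruction: the low--high piece $\bigl(f_{1}(x+w)-f_{1}(x)\bigr)A_{\abs w}f_{2}(x)$, equivalently the paraproduct $\sum_{j}S_{j}f_{2}\,\Delta_{j}f_{1}$, equivalently the fact that bilinear real interpolation of the endpoints $L^{q_{1}}\times L^{q_{2}}\to L^{r}$ and $W^{1,q_{1}}\times W^{1,q_{2}}\to W^{1,r}$ only lands you in a Besov space $B^{s}_{r,\rho}$ with third index $\rho>r$, not in $B^{s}_{r,r}=W^{s,r}$.

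This is not, however, a technical step you have merely postponed: it cannot be closed, because the inequality is false in the stated generality. Take $k=2$ and $f_{1}\equiv1$, which is admissible since constants lie in every $W^{s,q_{1}}(\R/\Z)$ with vanishing seminorm. The lemma then asserts the embedding $W^{s,q_{2}}(\R/\Z)\hookrightarrow W^{s,r}(\R/\Z)$ with $r<q_{2}$, and this fails: for a lacunary series $f_{2}=\sum_{j\ge1}2^{-js}c_{j}e^{2\pi i2^{j}x}$ each dyadic block contains a single frequency, so $[f_{2}]_{W^{s,\rho}}\sim\norm{(c_{j})}_{\ell^{\rho}}$ for every $\rho\in[1,\infty)$ and every $s\in(0,1)$, while $\ell^{q_{2}}\not\subset\ell^{r}$. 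The logarithmic divergence $\int_{-1/2}^{1/2}\abs w^{-1}\d w=\infty$ that your crude estimate produces is precisely the analytic shadow of this failure---the third Besov index cannot be lowered from $q_{2}$ to $r$---and no paraproduct summation or Kato--Ponce-type theorem will recover it, since those results live in the Triebel--Lizorkin scale $F^{s}_{p,2}$ rather than in $B^{s}_{p,p}$. What is true, provable in one line from your Leibniz identity, and actually sufficient for the paper's only application in Lemma~\ref{lem:reg-int} (where every factor is additionally bounded, being built from unit tangents and an analytic function on a compact set) is the algebra-type estimate $[f_{1}f_{2}]_{W^{s,\rho}}\le\norm{f_{1}}_{L^{\infty}}[f_{2}]_{W^{s,\rho}}+\norm{f_{2}}_{L^{\infty}}[f_{1}]_{W^{s,\rho}}$ for a \emph{single} exponent $\rho$, combined with H\"older for the $L^{\rho}$ parts; if you want a H\"older-type splitting of integrability you must carry $L^{\infty}$ norms of the complementary factors on the right-hand side rather than $W^{s,q_{j}}$ norms.
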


We also refer to Runst and Sickel~{\cite[Lem.~5.3.7/1~(i)]{RS}}. ---
For the following statement, one mainly has to treat $\norm{(D^k\psi)\circ f}_{\W[\s,p]}$ for $k\in\N\cup\set0$ and $\s\in(0,1)$
which is e.~g.\@ covered by~\cite[Thm.~5.3.6/1~(i)]{RS}.

\begin{lemma}[Chain rule]\label{lem:chain}
 Let $f\in\W[s,p](\mathbb R / \mathbb Z, \mathbb R^n)$, $s > 0$, $p \in (1,\infty)$.
 If $\psi\in C^\infty(\R)$ is globally Lipschitz continuous and $\psi$ and all its derivatives vanish at~$0$ then $\psi\circ f\in\W[s,p]$ and
 \[ \|\psi \circ f\|_{\W[s,p]} \le C\|\psi\|_{C^{\scriptstyle k}}\|f\|_{\W[s,p]} \]
 where $k$ is the smallest integer greater than or equal to $s$.
\end{lemma}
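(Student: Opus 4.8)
\emph{Sketch.} The plan is to prove the bound by induction on $k=\lceil s\rceil$, reducing each step to the case $0<s\le1$ and to the product rule (Lemma~\ref{lem:product}). For $0<s\le1$ (so $k=1$) one argues directly from~\eqref{eq:Wsemi}: since $\psi(0)=0$, global Lipschitz continuity gives $\abs{\psi(f(x))}\le\norm{\psi'}_{L^\infty}\abs{f(x)}$ and $\abs{\psi(f(x))-\psi(f(y))}\le\norm{\psi'}_{L^\infty}\abs{f(x)-f(y)}$, so inserting these into the $L^p$-norm and into~\eqref{eq:Wsemi} yields $\norm{\psi\circ f}_{\W[s,p]}\le\norm{\psi'}_{L^\infty}\norm{f}_{\W[s,p]}$; the case $s=1$ is identical after writing $(\psi\circ f)'=\psi'(f)\,f'$. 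The key point is that this elementary estimate applies verbatim to $\psi^{(j)}$ in place of $\psi$ for every~$j$ --- this is precisely where the hypothesis that \emph{all} derivatives of $\psi$ vanish at~$0$ is used --- giving $\norm{\psi^{(j)}\circ f}_{\W[\s,r]}\le C\norm{\psi^{(j)}}_{C^1}\norm{f}_{\W[\s,r]}$ for all $\s\in(0,1)$, $r\in(1,\infty)$, which is the estimate highlighted in the remark preceding the lemma.

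For the inductive step write $s=m+\s$ with $m=\lfloor s\rfloor\ge1$ and $\s=s-m\in[0,1)$ (integer $s$ being the classical Moser estimate, for which the seminorm part below is absent). It suffices to bound $\norm{\psi\circ f}_{\W[m,p]}$ and, when $\s>0$, $\seminorm{(\psi\circ f)^{(m)}}_{\W[\s,p]}$. By the Fa\`a di Bruno formula (multivariate if $f$ is vector-valued) each $(\psi\circ f)^{(l)}$, $0\le l\le m$, is a finite sum of terms $c\,\psi^{(j)}(f)\,f^{(b_1)}\cdots f^{(b_j)}$ with $b_i\ge1$ and $\sum_i b_i=l$; the $\W[m,p]$-bound then follows by H\"older's inequality and Sobolev embeddings in the usual way (the classical Moser estimate). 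For the seminorm one treats separately the summands in which \emph{every} $b_i\le m-1$ --- in particular all those with $j\ge2$ --- by the product rule: $\seminorm{\psi^{(j)}(f)\prod_i f^{(b_i)}}_{\W[\s,p]}\le C\norm{\psi^{(j)}(f)}_{\W[\s,q_0]}\prod_i\norm{f^{(b_i)}}_{\W[\s,q_i]}$ with $1/q_0+\sum_i1/q_i=1/p$; the first factor is handled by the base case together with $\W[s,p]\hookrightarrow\W[\s,q_0]$, the rest by $\W[s,p]\hookrightarrow\W[b_i+\s,q_i]$, and since $b_i+\s<s$ \emph{strictly} and there are $j+1\ge2$ factors all the Lebesgue exponents may be chosen finite (on the compact circle one may take them all equal to $(j+1)p$).

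The one remaining summand of $(\psi\circ f)^{(m)}$ is the top-order term $\psi'(f)\,f^{(m)}$, and I expect this to be the main obstacle: the naive product-rule split fails here because bounding $\norm{f^{(m)}}_{\W[\s,q]}$ by $\norm{f}_{\W[s,p]}$ would force $q\le p$ and leave no room for the second factor. The remedy is to split the increment $\psi'(f(x))f^{(m)}(x)-\psi'(f(y))f^{(m)}(y)$ as
\[ \psi'(f(x))\br{f^{(m)}(x)-f^{(m)}(y)}+\br{\psi'(f(x))-\psi'(f(y))}f^{(m)}(y). \]
The first term contributes at most $\norm{\psi'}_{L^\infty}\seminorm{f^{(m)}}_{\W[\s,p]}\le\norm{\psi'}_{L^\infty}\norm{f}_{\W[s,p]}$; for the second one uses $\abs{\psi'(f(x))-\psi'(f(y))}\le\norm{\psi''}_{L^\infty}\abs{f(x)-f(y)}$ together with Morrey's embedding, $\abs{f(x)-f(y)}\le C\norm{f}_{\W[s,p]}\abs{x-y}^{\alpha}$ with $\alpha=\min\br{s-\tfrac1p,1}>0$ (available since $m\ge1$ forces $s>\tfrac1p$), to trade a factor $\abs{x-y}^{\alpha}$ against the singular weight in~\eqref{eq:Wsemi}. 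The leftover weight integral $\int_0^{1/2}w^{\alpha p-1-\s p}\d w$ converges precisely because $m\ge1$ gives $\alpha>\s$, and what remains is $\norm{f^{(m)}}_{L^p}\le\norm{f}_{\W[m,p]}$. This closes the induction.

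Two remarks on what makes the argument run. The exponent bookkeeping above is routine only because $\R/\Z$ is compact, so that $\W[s_1,p_1]\hookrightarrow\W[s_2,p_2]$ whenever $s_1\ge s_2$ and $s_1-\tfrac1{p_1}\ge s_2-\tfrac1{p_2}$; this provides the needed slack everywhere except at the top-order term, which is exactly why that term requires the separate treatment above. Also, in the form stated the estimate is linear in $\norm{f}_{\W[s,p]}$ with a constant depending only on $\norm{\psi}_{C^k}$; the argument naturally produces a constant that in addition depends on a lower-order norm of~$f$ (equivalently, on $\norm{f}_{L^\infty}$), which is immaterial for every use of the lemma in this paper because there the argument of $\psi$ is \emph{a priori} bounded by the bi-Lipschitz estimate~\eqref{eq:bilip}. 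For the sharp form of the composition estimate we refer to~\cite[Thm.~5.3.6/1]{RS}.
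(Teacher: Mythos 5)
Your argument is essentially correct, but it takes a genuinely different route from the paper: the paper does not prove this lemma at all, it simply refers to Runst--Sickel (Thm.~5.3.6/1 and Lem.~5.3.7/1), i.e.\ to the general composition estimates in the Triebel--Lizorkin scale, whereas you give a self-contained induction on $k=\lceil s\rceil$ built from the elementary Lipschitz estimate for $0<s\le1$, Fa\`a di Bruno, the product rule, and a Morrey/Campanato trade-off for the top-order term $\psi'(f)f^{(m)}$. Your treatment of that top-order term is the right one and is exactly where a naive application of the product rule breaks down; the convergence check $\alpha>\sigma$ (using $m\ge 1>1/p$) is sound, and the dichotomy ``all $b_i\le m-1$'' versus ``$j=1$, $b_1=m$'' is exhaustive. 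What your approach buys is transparency and independence from the paradifferential machinery; what the citation buys is the sharp form of the estimate. Indeed, as you honestly flag, your argument does not quite deliver the literal statement: the terms with $j\ge2$ produce $\|f\|_{W^{s,p}}^{j}$ rather than a bound linear in $\|f\|_{W^{s,p}}$, and the Moser-type $L^p$ bounds bring in $\|f\|_{L^\infty}$, so the constant you obtain depends on a lower-order norm of $f$. This is harmless for every use of the lemma in the paper, where the argument of $\psi=\mathcal G$ is confined to a fixed compact interval by the bi-Lipschitz estimate, but it is worth stating the lemma in that (quasi-linear) form if one wants your proof to match the claim verbatim. Two further small points: the lemma is applied with vector-valued $f$, so one should either read $\psi\in C^\infty(\R^d)$ and use the multivariate Fa\`a di Bruno formula (as you note) or reduce to scalar components; and at the borderline $s-1/p=1$ the H\"older exponent $\alpha$ should be taken strictly below $\min(s-1/p,1)$, which still satisfies $\alpha>\sigma$ because $m>1/p$ is strict.
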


 \section{Finite-energy paths are embedded}\label{sect:beta}
\renewcommand{\E}[1][p,q]{\ensuremath{\mathrm{TP}^{(#1)}}}

Let us indicate how to adapt the respective arguments presented in~\cite[Sect.~2]{SM7}
to get

\begin{theorem}[Embeddedness for $p\ge q+2$ {\cite[Thm.~1.1, Prop.~4.1]{SM7}}]\label{thm:topo}\hfill\\
 Let $\g\in C^{0,1}(\R/\Z,\R^n)$ be parametrized by arc-length with $\E(\g)<\infty$ for $p\ge q+2$. \\
 Then the image of $\g$ is a one-dimensional topological manifold, possibly with boundary, embedded in $\R^n$.
 In the case that $p > q+2$ this manifold is even of class $C^{1+\kappa}$ 
 for $\kappa = \frac {p-q-2}{q+4}$.
\end{theorem}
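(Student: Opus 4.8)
The plan is to transcribe, with the single exponent $q$ of~\cite[Sect.~2]{SM7} replaced by the pair $(p,q)$, the two-step scheme there: first a \emph{flatness} estimate (control of Jones' $\beta$-numbers) squeezed out of the finiteness of $\E(\g)$, then the deduction of (a)~local injectivity together with the absence of distant near-collisions, hence embeddedness, and (b)~in the strictly subcritical case $p>q+2$, $C^{1,\kappa}$-regularity by summing the $\beta$-numbers over dyadic scales. Throughout I would work with the arc-length parametrization (already assumed) and with the energy in the form~\eqref{eq:tp-reg2}, exploiting~\eqref{eq:proj} so that the numerator $\abs{\PP[\dg(u)]\br{\g(u+w)-\g(u)}}$ ignores the tangential direction. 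For $x=\g(u_{0})$ and a small scale $r>0$ set $\beta_{\g}(x,r):=\frac1r\inf_{L}\sup\bigl\{\dist(y,L):y\in\image\g\cap B_{r}(x)\bigr\}$, the infimum over affine lines. The technical core, mirroring~\cite[Prop.~4.1]{SM7}, is a \emph{reverse} bound of the shape $\beta_{\g}(x,r)^{q}\le C\,r^{\,p-q-2}\,\E\bigl(\g;B_{Cr}(x)\bigr)$, where $\E(\g;B_{Cr}(x))$ is the part of~\eqref{eq:tp-reg2} carried by parameters with $\g(u),\g(u+w)\in B_{Cr}(x)$: if a portion of the curve inside $B_r(x)$ lies at distance $\gtrsim h:=\beta_{\g}(x,r)\,r$ from the tangent line $\ell(u_{0})$, then — using connectedness of the curve and a short-arc bi-Lipschitz bound, itself forced by $\E(\g)<\infty$ — on a $(u,w)$-set of measure $\gtrsim r^{2}$ one has $\dist(\ell(u),\g(u+w))\gtrsim h$ while $\abs{\g(u+w)-\g(u)}\lesssim r$, so the integrand is $\gtrsim h^{q}/r^{p}$. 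This is exactly where the hypothesis $p\ge q+2$ (resp. $p>q+2$) takes over the role of $q\ge2$ (resp. $q>2$) in~\cite{SM7}; that $\image\g\cap B_{r}(x)$ is a graph over $\ell(u_{0})$ on small scales is part of the same package.

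From the reverse bound, embeddedness for $p\ge q+2$ follows exactly as in~\cite[Thm.~1.1]{SM7}. Suppose two arcs of $\g$ intersected, or merely came within $\varepsilon$ of each other at a definite crossing angle; then along the transverse strand the point at parameter-distance $t$ from the (near-)crossing sits at distance $\sim t$ both from the base point and from the relevant tangent line, forcing an integrand $\sim t^{\,q-p}$, which is non-integrable precisely because $p\ge q+2>q+1$ — a contradiction. The tangential-crossing case is excluded by the flatness estimate, which would force the two strands to coincide on a short arc, impossible by the local bi-Lipschitz property. Combining this with local injectivity on short sub-arcs, $\image\g$ is a compact, connected set that is locally an arc, hence a one-dimensional topological manifold, possibly with boundary, embedded in $\R^{n}$.

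For the $C^{1,\kappa}$-regularity when $p>q+2$, I would fix the optimal approximating lines $L_{x,r}$ from the flatness step and bound, by elementary geometry, the change of direction between consecutive dyadic scales, $\angle(L_{x,r},L_{x,r/2})\lesssim\beta_{\g}(x,r)+\beta_{\g}(x,r/2)$, and between nearby centres. Feeding in $\beta_{\g}(x,r)^{q}\lesssim r^{\,p-q-2}\,\E(\g;B_{Cr}(x))$ and integrating over $r$ — the integral converging \emph{precisely} because $p>q+2$ — yields a Dini-type bound $\int_{0}^{r_{0}}\beta_{\g}(x,r)^{q}\,\frac{\d r}{r}\le C\,r_{0}^{\,p-q-2}\,\E(\g)$ uniformly in $x$, whence a Campanato-type argument produces Hölder continuity of $\dg$. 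Tracking the loss incurred when the height bound $h\le\beta_\g(x,r)\,r$ is converted into a bound on the tangent direction gives the stated exponent $\kappa=\tfrac{p-q-2}{q+4}$, i.e.\ $\image\g\in C^{1,\kappa}$, as in~\cite[Thm.~1.3]{SM7} with the exponents rearranged.

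The step I expect to be the main obstacle is the reverse energy bound $\beta_{\g}(x,r)^{q}\lesssim r^{\,p-q-2}\,\E(\g;B_{Cr}(x))$ with the correct uniformity in $x$ and in the choice of near-tangent line; in~\cite{SM7} this is the genuinely delicate part (their combinatorial and geometric lemmas), and the only new feature here is the bookkeeping of the decoupled powers $p$ and $q$ in place of $q$ and $2q$. One must also check that the short-arc bi-Lipschitz control needed to run the argument is available from $\E(\g)<\infty$ and $\abs{\dg}\equiv1$ alone — it is, by the elementary estimate underlying Proposition~\ref{prop:bilip} applied to small sub-arcs — since the uniform version Proposition~\ref{prop:bi-Lipschitz} is established later and moreover uses $p<2q+1$, which is not assumed here.
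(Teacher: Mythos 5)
Your overall strategy is exactly the paper's: adapt the Jones $\beta$-number machinery of~\cite[Sect.~2 and 4]{SM7} to the decoupled exponents, obtain a flatness/decay estimate for $\beta_\g(x,r)$ from finiteness of $\E(\g)$, invoke the ``flat implies manifold'' result~\cite[Thm.~1.4]{SM7} for embeddedness, and sum over dyadic scales to get $C^{1,\kappa}$ when $p>q+2$. Two points in your sketch need correction, though.

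First, the exponent bookkeeping in your ``reverse bound'' is off, and in a way that matters. You claim $\beta_{\g}(x,r)^{q}\lesssim r^{\,p-q-2}\,\E(\g;B_{Cr}(x))$ on the grounds that the integrand is $\gtrsim h^{q}/r^{p}$ on a parameter set of measure $\gtrsim r^{2}$. But the set of pairs $(\s,\tau)$ one actually controls is a product of arcs each of parameter length only $\sim\eps^{2}d$ (with $\eps\sim\beta$), namely the sets $A_d$, $N_d$ in the adaptation of~\cite[Lem.~2.1]{SM7}; its measure is $\sim\eps^{4}d^{2}$, not $\sim d^{2}$. The correct estimate is therefore of the form $\eps^{q+4}d^{2+q-p}\lesssim\E(\g)$, i.e.\ $\beta_{\g}(x,2d)^{q+4}\lesssim d^{\,p-q-2}\,\E(\g)$ --- and this is precisely where $\kappa=\tfrac{p-q-2}{q+4}$ comes from. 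It is not a ``loss incurred when converting the height bound into a bound on the tangent direction'' downstream; with your stated reverse bound you would instead land at the (unobtainable) exponent $\tfrac{p-q-2}{q}$. Second, you lean on a ``short-arc bi-Lipschitz bound, itself forced by $\E(\g)<\infty$'' via the estimate underlying Proposition~\ref{prop:bilip}. That proposition is stated for curves already known to lie in $\Wia[(p-1)/q,q]$; here $\g$ is a priori only an arc-length $C^{0,1}$ curve, possibly non-injective, and establishing any injectivity or graph property is the very content of the theorem. The argument in the paper (and in~\cite{SM7}) avoids this circularity: it uses only the arc-length parametrization to bound from below the measure of the preimage sets $A_d(s,\eps)$, and derives both flatness and local injectivity from the resulting energy contradiction. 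With these two repairs your plan coincides with the paper's proof.
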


To this end it is sufficient to change just a few lines in the proof
of~\cite[Lem.~2.1]{SM7}. However, we add some more details
for the readers' convenience.

We briefly introduce some notation that will be used in the
statements below and refer to~\cite{SM7} for further details.
The \emph{beta numbers} introduced by Jones are defined via
\[ \beta_{\g}(x,r) := \inf\sets{\sup_{y\in\image\g\cap B_{r}(x)}\frac{\dist(y,G)}r}{G\text{ is a straight line through }x}. \]
For $\g(s)\ne\g(t)$ we denote the straight line through $\g(s)$ and $\g(t)$ by
\[ G(s,t) := \g(t)+\R\br{\g(s)-\g(t)}. \]
The $\delta$-neighborhood of some closed set $F$ is denoted by
\[ U_{\delta}(F) := \sets{x\in\R^{n}}{\dist(x,F)<\delta}. \]

\begin{lemma}[{}{\cite[Lem.~2.1]{SM7}}]\label{lem:SM7.2.1}
 For $p\ge q+2$ there is some $c_{p,q}>0$
 such that if $\g\in C^{0,1}\rzd$, $\eps\in(0,\frac1{200})$, and
 $d\in(0,\diam\image\g)$
 satisfy
 \begin{equation}\label{eq:SM7.2.9}
  \eps^{q+4}d^{2+q-p}\ge c_{p,q}\E(\g),
 \end{equation}
 then
 \[ \image\g\cap B_{2d}(\g(s))\subset U_{20\eps d}(G(s,t)) \]
 holds for any two points $\g(s)$, $\g(t)$ with $\abs{\g(s)-\g(t)}=d$.
 In particular,
 \[ \sup_{x\in\image\g}\beta_{\g}(x,2d)\le 10\eps. \]
\end{lemma}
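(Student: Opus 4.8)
The plan is to run the contradiction‑plus‑scaling argument of~\cite[Lem.~2.1]{SM7} with essentially no change, the only adjustment being that the homogeneity exponent of $\mathscr E_q = 2^q\E[2q,q]$ is replaced by that of $\E=\E[p,q]$: since $\E(\lambda\g)=\lambda^{2+q-p}\E(\g)$, the power $d^{2-q}$ occurring throughout~\cite{SM7} becomes $d^{2+q-p}$, which is precisely the power appearing in~\eqref{eq:SM7.2.9}. Everything else, in particular the powers of $\eps$, is dictated by the exponent $q$ on the distance and by elementary planar geometry, and is therefore unaffected by the decoupling of the exponents.

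I would argue by contradiction. Fix $\g(s),\g(t)$ with $\abs{\g(s)-\g(t)}=d$, suppose the asserted inclusion fails, and pick $\g(\sigma)\in\image\g\cap B_{2d}(\g(s))$ with $h:=\dist\br{\g(\sigma),G(s,t)}>20\eps d$. Then $\g(s),\g(t),\g(\sigma)$ all lie in $B_{2d}(\g(s))$, hence have mutual distances $\le 4d$, and span a triangle with base $\abs{\g(s)-\g(t)}=d$ and height $h>20\eps d$: a uniformly ``fat'' triangle at scale~$d$. The core of the proof --- which I would quote verbatim from~\cite[Lem.~2.1]{SM7}, since it is purely planar geometry together with the connectedness and unit‑speed parametrization of $\g$ --- produces from this configuration a measurable set $S\subset(\R/\Z)\times[-\tfrac12,\tfrac12]$, obtained by letting $u$ range over a short sub‑arc of $\g$ near one vertex and $u+w$ over a short sub‑arc near another, on which simultaneously
\[ \abs{\D\g}\le Cd \qquad\text{and}\qquad \abs{\PP\br{\D\g}}=\dist\br{\ell(u),\g(u+w)}\ge b, \]
with $\abs S$ and $b$ chosen in terms of $\eps$ and $d$ only (not of $p$) so that $\abs S\,b^{q}\ge c\,\eps^{q+4}\,d^{q+2}$. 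The only point where the passage from $\mathscr E_q$ to $\E$ intervenes is the uniform upper bound $\abs{\D\g}\le Cd$ on $S$, which is exactly what is needed to replace $\abs{\D\g}^{-2q}$ by $\abs{\D\g}^{-p}\ge (Cd)^{-p}$; this bound holds because $S$ lives in a ball of radius $O(d)$. Feeding the two displayed estimates into~\eqref{eq:tp-reg2} and using $\abs\dg\equiv1$ yields
\[ \E(\g)\ \ge\ \iint_{S}\frac{\abs{\PP\br{\D\g}}^{q}}{\abs{\D\g}^{p}}\d w\d u\ \ge\ \frac{\abs S\,b^{q}}{(Cd)^{p}}\ \ge\ c'\,\eps^{q+4}\,d^{2+q-p}, \]
so choosing $c_{p,q}$ larger than $1/c'$ contradicts~\eqref{eq:SM7.2.9}. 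This establishes the inclusion, and the ``in particular'' follows at once by taking the line $G(s,t)$ as competitor in the definition of $\beta_{\g}$, exactly as in~\cite{SM7}.

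The hypothesis $p\ge q+2$ enters only through the sign of the scaling exponent: with $2+q-p\le 0$ the right‑hand side of~\eqref{eq:SM7.2.9} grows as $d\to0$, so the hypothesis is a genuine smallness condition on the energy relative to the scale --- the analytic mechanism forcing small beta numbers at small scales. I expect the only real work, and hence the main obstacle, to be the planar geometric estimate producing the set $S$ with the two displayed properties and the correct accumulated power $\eps^{q+4}$; but this is carried out in full in~\cite[Lem.~2.1]{SM7} and transfers line by line, so in the write‑up I would simply reproduce it, flagging the single substitution in which $d^{2-q}$ is replaced by $d^{2+q-p}$.
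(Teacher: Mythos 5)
Your proposal is correct and follows essentially the same route as the paper: both run the contradiction argument of~\cite[Lem.~2.1]{SM7} essentially verbatim, the sole modification being that the denominator $\abs{\g(\s)-\g(\tau)}^{2q}$ is replaced by $\abs{\g(\s)-\g(\tau)}^{p}\le (Cd)^{p}$, which produces exactly the exponent $d^{2+q-p}$ in the threshold~\eqref{eq:SM7.2.9}. One small caveat on your description: producing the set $S$ is not ``purely planar geometry'' --- in~\cite{SM7} (and in the paper's write-up) it requires a preliminary application of the energy-smallness hypothesis to show that the tangents near $\g(s)$ transversal to $\g(t)-\g(s)$ fill a parameter set of measure at most $\tfrac12\eps^{2}d$, and that preliminary step needs the same $p$-substitution; since you commit to reproducing~\cite{SM7} line by line this is handled, but it means~\eqref{eq:SM7.2.9} is invoked twice rather than once.
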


Having this lemma, we follow exactly the line of argument in \cite{SM7}.
An immediate consequence of Lemma~\ref{lem:SM7.2.1} is then following
corollary, which guarantees a certain decay of Jones' beta numbers
if $p>q+2$:

\begin{corollary}[{}{\cite[Cor.~2.2]{SM7}}]\label{cor:SM7.2.2}
 For $p\ge q+2$ there are $\tilde c_{p,q},\delta_{p,q}>0$
 such that if
 \[ \E(\g)^{\tfrac1{q+4}}d^{\kappa}<\delta_{p,q} \qquad\text{where}\qquad
    \kappa = \tfrac{p-q-2}{q+4} \]
 for $\g\in C^{0,1}\rzd$ and $0<d\ll1$,
 then
 \[ \sup_{x\in\image\g}\beta_{\g}(x,2d)\le \tilde c_{p,q}\E(\g)^{\tfrac1{q+4}}d^{\kappa}. \]
\end{corollary}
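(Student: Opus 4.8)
The plan is to deduce the corollary directly from Lemma~\ref{lem:SM7.2.1} by choosing the free parameter $\eps$ in that lemma as a function of the scale $d$. Given $\g$ and $d$, I would set
\[ \eps := \br{c_{p,q}\,\E(\g)\,d^{p-q-2}}^{1/(q+4)} = c_{p,q}^{1/(q+4)}\,\E(\g)^{1/(q+4)}\,d^{\kappa}, \qquad \kappa=\tfrac{p-q-2}{q+4}, \]
with $c_{p,q}$ the constant furnished by Lemma~\ref{lem:SM7.2.1}. This choice is rigged so that~\eqref{eq:SM7.2.9} holds with equality, $\eps^{q+4}d^{2+q-p}=c_{p,q}\E(\g)$, whence the lemma applies and gives $\sup_{x\in\image\g}\beta_\g(x,2d)\le10\eps$.

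Next I would check the two side conditions of Lemma~\ref{lem:SM7.2.1}. The requirement $d\in(0,\diam\image\g)$ is covered by the hypothesis $0<d\ll1$ (if $d\ge\diam\image\g$ there is nothing to prove). For the requirement $\eps\in(0,\tfrac1{200})$ I would exploit the smallness hypothesis: by construction $\eps=c_{p,q}^{1/(q+4)}\,\E(\g)^{1/(q+4)}d^\kappa<c_{p,q}^{1/(q+4)}\,\delta_{p,q}$, so it suffices to \emph{define} $\delta_{p,q}:=\tfrac1{200}\,c_{p,q}^{-1/(q+4)}$ to guarantee $\eps<\tfrac1{200}$.

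Putting these together with $\tilde c_{p,q}:=10\,c_{p,q}^{1/(q+4)}$ yields
\[ \sup_{x\in\image\g}\beta_\g(x,2d)\le10\eps=\tilde c_{p,q}\,\E(\g)^{1/(q+4)}\,d^{\kappa}, \]
which is exactly the claimed estimate. Two degenerate situations are worth a remark but cause no trouble: if $\E(\g)=0$ one lets $\eps\searrow0$ in Lemma~\ref{lem:SM7.2.1}, so all beta numbers vanish and the bound is trivial; if $p=q+2$ then $\kappa=0$ and the estimate degenerates to a scale-independent smallness of the beta numbers, still consistent with the statement. I do not expect any real obstacle here: all of the geometric content sits in Lemma~\ref{lem:SM7.2.1}, and the corollary is merely the conversion of the admissible window for $\eps$ into a scaling law in $d$, precisely as carried out in~\cite{SM7}.
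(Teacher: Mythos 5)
Your derivation is correct and is exactly the argument the paper intends: the corollary is stated there as an immediate consequence of Lemma~\ref{lem:SM7.2.1}, obtained by saturating~\eqref{eq:SM7.2.9} with the choice $\eps = c_{p,q}^{1/(q+4)}\E(\g)^{1/(q+4)}d^{\kappa}$ and letting the smallness hypothesis enforce $\eps<\tfrac1{200}$. Your bookkeeping of $\delta_{p,q}$ and $\tilde c_{p,q}$ and your remarks on the degenerate cases are all fine.
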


\begin{proof}[Lemma~\ref{lem:SM7.2.1}]
 As the quantities in the claim do not depend on the actual para\-metrization of~$\g$, we may assume that $\g$ is parametrized by arc-length and set
 \begin{align*}
 A_{d}(s,\eps) &:= \sett{\tau\in\R/\Z}{\g(\tau)\in B_{\eps^{2}d}(\g(s))}, \\
 X_{d}(s,t,\eps) &:= \sett{\s\in A_{d}(s,\eps)}{\exists\; \dg(\s):
 \sphericalangle\br{\dg(\s),\g(t)-\g(s)}\in\bv{\tfrac\eps{10},\pi-\tfrac\eps{10}}}, \\
 N_{d}(s,t,\eps) &:= A_{d}(s,\eps)\setminus X_{d}(s,t,\eps).
 \end{align*}
 From~\cite[Eqn.~(2.10), (2.11)]{SM7} we infer
 for $\s\in X_{d}(s,t,\eps)$ and $\tau\in A_{d}(t,\eps)$
 \begin{subequations}
 \begin{align}
  \abs{\g(\s)-\g(\tau)} &\in \br{d(1-2\eps^{2}),d(1+2\eps^{2})}, \\
  \dist\br{\g(\tau),\ell(\s)} &\ge \frac{\eps d}{25}.
 \end{align}
 \end{subequations}
 From~\eqref{eq:ratio} we deduce
 \[ \frac1{\tilde r_{\g}^{(p,q)}(\s,\tau)} \ge c(p,q)\eps^{q}d^{q-p}. \]
 By $\abs{A_{d}(s,\eps)}\ge2\eps^{2}d$ we arrive at
 \[ \E(\g)\ge\iint_{X_{d}(s,t,\eps)\times A_{d}(s,\eps)}
    \frac{\d\s\d\tau}{\tilde r_{\g}^{(p,q)}(\s,\tau)}
    \ge c(p,q)\abs{X_{d}(s,t,\eps)}\eps^{q+2}d^{1+q-p}. \]
 As the assumption $\abs{X_{d}(s,t,\eps)}\ge\tfrac12\eps^{2}d$
 is not consistent with~\eqref{eq:SM7.2.9}
 for a sufficiently large choice of $c_{p,q}>0$, we obtain
 \begin{equation}\label{eq:N_d}
  \abs{N_{d}(s,t,\eps)} \ge \tfrac32\eps^{2} d.
 \end{equation}
 Supposing $\g(\tau)\in B_{2d}(\g(s))\setminus U_{20\eps d}(G(s,t))$,
 $\s\in N_{d}(s,t,\eps)$, and $\tau_{1}\in A_{d}(\tau,\eps)$
 yields by~\cite[Proof of Lemma~2.1, Step~2]{SM7}
 \[ \dist(\g(\tau_{1}),\ell(\s)) \ge 18\eps d \]
 which again gives
 \[ \frac1{\tilde r_{\g}^{(p,q)}(\s,\tau_{1})} \ge \tilde c(p,q)\eps^{q}d^{q-p} \]
 and
 \[ \E(\g)\ge\iint_{N_{d}(s,t,\eps)\times A_{d}(\tau,\eps)}
    \frac{\d\s\d\tau_{1}}{\tilde r_{\g}^{(p,q)}(\s,\tau_{1})}
    \ge \tilde c(p,q)\abs{N_{d}(s,t,\eps)}\eps^{q+2}d^{1+q-p}. \]
 Applying~\eqref{eq:N_d} and increasing~$c_{p,q}$ if necessary,
 this contradicts~\eqref{eq:SM7.2.9}.
\end{proof}

Revisiting the proof of Lemma~\ref{lem:SM7.2.1} we infer as in~\cite{SM7}
the following result for the critical case $p=q+2$:

\begin{lemma}[{}{\cite[Lem.~2.3]{SM7}}]\label{lem:SM7.2.3}
 There is some $c_{q}>0$ with
 \[ \sup_{x\in\image\g}\beta_{\g}(x,2d)\le c_{q}\omega_{q}(d) \]
 for $\g\in C^{0,1}\rzd$
 parametrized by arc-length, $\E[q+2,q](\g)<\infty$,
 and $0<d\ll1$,
 where $\omega_{q}(d)$ denotes the supremum of
 \[ \br{\iint_{A\times B}\frac{\d s\d t}{\tilde r_\g^{(q+2,q)}(s,t)}}^{\tfrac1{q+4}} \]
 taken over all pairs of subsets $A,B\subset\R/\Z$
 with $\abs A,\abs B\le\frac d{100}$.
\end{lemma}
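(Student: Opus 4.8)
The plan is to revisit the proof of Lemma~\ref{lem:SM7.2.1} and locate the precise point at which the factor $d^{2+q-p}$ enters: for $p=q+2$ this degenerates to a constant, so hypothesis~\eqref{eq:SM7.2.9} becomes vacuous as $d\searrow0$ and gives no decay. The remedy, following \cite{SM7}, is to replace the \emph{global} energy $\E[q+2,q](\g)$ — which stands on the left of every energy estimate in that proof — by the \emph{local} contribution $\iint_{A\times B}\d s\d t\,/\,\tilde r_\g^{(q+2,q)}(s,t)$ over the two short parameter sets that actually carry the integration. Since $1/\tilde r_\g^{(q+2,q)}\in L^1\br{(\R/\Z)^2}$, this localised quantity is bounded by $\omega_q(d)^{q+4}$ as soon as the two sets have measure at most $d/100$, and by absolute continuity of the integral $\omega_q(d)\to0$ as $d\searrow0$; that smallness is all one needs.

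Concretely, I would keep the sets $A_d(s,\eps)$, $X_d(s,t,\eps)$, $N_d(s,t,\eps)$ from the proof of Lemma~\ref{lem:SM7.2.1}, but intersect each with the parameter interval of radius $\eps^2 d$ about its base point. Because $\g$ is arc-length parametrized (hence $1$-Lipschitz), these truncated sets still exhaust that interval, so the lower bounds $\abs{A_d}\ge2\eps^2 d$ etc.\ survive; and for $\eps<\tfrac1{200}$ they now have measure $\le2\eps^2 d<d/100$. The purely Euclidean estimates quoted from \cite{SM7} — $\abs{\g(\s)-\g(\tau)}\in\br{d(1-2\eps^2),d(1+2\eps^2)}$ and $\dist(\g(\tau),\ell(\s))\ge\eps d/25$ for $\s\in X_d(s,t,\eps)$, $\tau\in A_d(t,\eps)$, as well as $\dist(\g(\tau_1),\ell(\s))\ge18\eps d$ in Step~2 — carry over verbatim and yield $1/\tilde r_\g^{(q+2,q)}(\cdot,\cdot)\ge c\,\eps^{q}d^{-2}$ on the relevant product sets. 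Integrating over two truncated sets, each of measure between $\eps^2 d$ and $2\eps^2 d$, produces a lower bound of the form $c\,\eps^{q+4}$ with $c>0$, in which the powers of $d$ cancel exactly — the whole point of the critical exponent.

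The contradiction is then set up exactly as in Lemma~\ref{lem:SM7.2.1}. If the truncation of $X_d(s,t,\eps)$ has measure $\ge\tfrac12\eps^2 d$, combining the displayed lower bound with $\iint_{A\times B}\d s\d t/\tilde r_\g^{(q+2,q)}(s,t)\le\omega_q(d)^{q+4}$ gives $\eps^{q+4}\le C\,\omega_q(d)^{q+4}$; otherwise the truncation of $N_d(s,t,\eps)$ has measure $\ge\tfrac32\eps^2 d$, and if in addition some $\g(\tau)\in B_{2d}(\g(s))\setminus U_{20\eps d}(G(s,t))$ existed, Step~2 of \cite{SM7} would give $\dist(\g(\tau_1),\ell(\s))\ge18\eps d$ for $\tau_1$ in the truncated $A_d(\tau,\eps)$ and hence again $\eps^{q+4}\le C'\,\omega_q(d)^{q+4}$. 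Choosing $\eps:=\Lambda\,\omega_q(d)$ with $\Lambda^{q+4}>\max(C,C')$ — legitimate since $\eps<\tfrac1{200}$ for $d$ small — rules out both alternatives, so $\image\g\cap B_{2d}(\g(s))\subset U_{20\eps d}(G(s,t))$ for every $s$ and every $t$ with $\abs{\g(s)-\g(t)}=d$ (such $t$ exists for $d<\diam\image\g$). Since $G(s,t)$ passes through $\g(s)$, this yields $\beta_\g(\g(s),2d)\le10\eps=10\Lambda\,\omega_q(d)=:c_q\omega_q(d)$.

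I expect the main difficulty to be bookkeeping rather than conceptual: one must ensure that \emph{every} integral invoked — the one bounding the measure of $X_d$, the one in Step~2, and any auxiliary ones — runs over a pair of parameter sets of measure $\le d/100$, which is what forces the truncation device above, and that a single constant $\Lambda$ can be chosen to defeat all these occurrences simultaneously. Apart from this, the argument is a line-by-line transcription of the proof of Lemma~\ref{lem:SM7.2.1} with $2+q-p=0$ and with $\E[q+2,q](\g)$ replaced throughout by the appropriate localised integral; this is exactly the route taken in \cite[Lem.~2.3]{SM7}.
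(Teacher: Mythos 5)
Your proposal is correct and follows exactly the route the paper takes: it only remarks that the lemma is obtained by ``revisiting the proof of Lemma~\ref{lem:SM7.2.1}'' with the global energy replaced by the localised integrals over parameter sets of measure at most $d/100$, which is precisely the truncation-and-cancellation-of-$d$-powers argument you spell out. The choice $\eps=\Lambda\,\omega_q(d)$ and the smallness of $\omega_q(d)$ via absolute continuity of the integral are exactly the ingredients of \cite[Lem.~2.3]{SM7} that the paper relies on.
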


\begin{proof}[\empty]\emph{Sketch of the proof of Theorem~\ref{thm:topo}.}
According to~\cite[Theorem~1.4]{SM7}, the image of any arc-length parametrized
curve $\g\in C^{0,1}\rzd$ with
\begin{equation}
 \sup_{x\in\image\g}\beta_{\g}(x,d)\le\omega(d),
\end{equation}
where $\omega:[0,1]\to[0,\infty)$
is some continuous non-decreasing function 
with $\omega(0)=0$,
is a one-dimensional submanifold
of~$\R^{n}$, possibly with boundary.

If now $p>q+2$ we get from Lemma~\ref{lem:SM7.2.1} that
\begin{equation*}
 \beta_{\g}(x,2d) \leq C d^\kappa
\end{equation*}
from which we deduce following exactly the 
arguments from \cite[Section~4]{SM7}, that the image of $\g$ is a submanifold
of class $C^\kappa$.
\end{proof}
\end{appendix}

%%%%%%%%%%%%%%%%%%%%%%%%%
% Bibliography
\bibliography{projekt}
\bibliographystyle{abbrvhref}

\end{document}